\newcolumntype{C}[1]{>{\centering\arraybackslash}m{#1}}
\newcolumntype{L}[1]{>{\raggedright\arraybackslash}m{#1}}
\newtheorem{theorem}{Theorem}[section]
\newtheorem{lemma}[theorem]{Lemma}
\newtheorem{proposition}[theorem]{Proposition}
\newtheorem{corollary}[theorem]{Corollary}
\theoremstyle{definition}
\newtheorem{definition}[theorem]{Definition}
\theoremstyle{remark}
\newtheorem{remark}[theorem]{Remark}
\numberwithin{equation}{section}
\newcommand{\Ann}[1]{{#1}^0}
\newcommand{\hook}{\lrcorner \,}
\newcommand{\id}{\mathrm{id}}
\newcommand{\ad}{\mathrm{ad}}
\newcommand{\vol}{\mathrm{vol}}
\newcommand{\bN}{\mathbb{N}} 
\newcommand{\bO}{\mathbb{O}} 
\newcommand{\bC}{\mathbb{C}}
\newcommand{\bR}{\mathbb{R}} 
\newcommand{\bH}{\mathbb{H}} 
\newcommand{\bF}{\mathbb{F}}
\renewcommand{\Re}{\mathrm{Re}} 
\renewcommand{\Im}{\mathrm{Im}}
\renewcommand{\dim}[1]{\mathrm{dim}(#1)}
\newcommand{\spa}[1]{\mathrm{span}(#1)}
\newcommand{\g}{\mathfrak{g}} 
\newcommand{\h}{\mathfrak{h}}
\newcommand{\uf}{\mathfrak{u}} 
\newcommand{\GL}{\mathrm{GL}} 
\newcommand{\SL}{\mathrm{SL}}
\newcommand{\Sp}{\mathrm{Sp}}
\newcommand{\sgn}{\mathrm{sgn}}
\newcommand{\Hom}{\mathrm{Hom}}
\newcommand{\JB}{\mathrm{JB}}
\newcommand{\G}{\mathrm{G}}
\newcommand{\Spin}{\mathrm{Spin}}
\newcommand{\End}{\mathrm{End}}
\newcommand{\PGL}{\mathrm{PGL}}
\newcommand{\op}{\oplus} 
\newcommand{\tr}{\mathrm{tr}}
\renewcommand{\a}{\alpha}
\newcommand{\vp}{\varphi} 
\renewcommand{\L}{\Lambda}
\begin{document}
\title{Cocalibrated structures on Lie algebras with a codimension one Abelian ideal}
\author{Marco Freibert}
\address{Marco Freibert, Fachbereich Mathematik, Universit\"at Hamburg, Bundesstra{\ss}e 55, 
D-20146 Hamburg, Germany}
\email{freibert@math.uni-hamburg.de}

\subjclass[2000]{53C25 (primary), 53C15, 53C30 (secondary)}

\keywords{cocalibrated structures, Lie algebras with codimension one Abelian ideals, special geometry on Lie algebras}

\begin{abstract}
  Cocalibrated $\G_2$-structures and cocalibrated $\G_2^*$-structures are the natural
  initial values for Hitchin's evolution equations whose solutions
  define (pseudo)-Riemannian manifolds with holonomy group contained in $\Spin(7)$ or $\Spin_0(3,4)$, respectively.
  In this article, we classify which seven-dimensional real Lie algebras with a
  codimension one Abelian ideal admit such structures. Moreover, we classify the seven-dimensional
  complex Lie algebras with a codimension one Abelian ideal which admit cocalibrated $(\G_2)_{\bC}$-structures.
\end{abstract}

\maketitle

\section{Introduction}
In Berger's list \cite{Be} of possible holonomy groups of irreducible non-symmetric simply connected Riemannian manifolds the
case of holonomy equal to $\Spin(7)$ had been unsettled for over $30$ years. Finally, Bryant \cite{Br} in 1987 was the first who was able to construct examples of Riemannian manifolds with holonomy group equal to $\Spin(7)$. Moreover, he also constructed in the same article examples for the previously unsettled case of holonomy group equal to $\Spin_0(3,4)$, corresponding to irreducible non-symmetric simply connected pseudo-Riemannian metrics with split signature $(4,4)$. 

In 2000, Hitchin \cite{Hi} came up with a nice machinery to construct Riemannian manifolds with holonomy groups contained in $\Spin(7)$. Therefore he introduced a certain partial differential equation for three-forms on a compact seven-dimensional manifold $M$. He showed that solutions of this equation may be used to define a Riemannian metric with holonomy contained in $\Spin(7)$ on $M\times I$ for a sufficiently small interval $I\subseteq \bR$ if one uses as initial value what is called a \emph{cocalibrated $\G_2$-structure} $\varphi\in \Omega^3 M$ on $M$. A cocalibrated $\G_2$-structure $\varphi\in \Omega^3 M$ is a three-form on $M$ which pointwise looks like a certain standard form and whose Hodge dual $\star_{\varphi}\varphi\in \Omega^4 M$ with respect to the induced Riemannian metric is closed. In \cite{CLSS}, it has been proved that compactness is not necessary to get, using the machinery of Hitchin, a metric with holonomy contained in $\Spin(7)$ on an open neighborhood of $M\times \{0\}$ in $M\times \bR$. Moreover, Hitchin's flow equation has been generalized in \cite{CLSS} to the pseudo-Riemannian case, yielding a metric with holonomy contained in $\Spin_0(3,4)$ if the initial value is a cocalibrated $\G_2^*$-structure $\varphi\in \Omega^3 M$. Finally, it has been showed that if all data are real-analytic, a unique local solution exists in both cases. Similar to cocalibrated $\G_2$-structures, cocalibrated $\G_2^*$-structures $\varphi\in \Omega^3 M$ are three-forms which pointwise look like a certain other standard form and whose Hodge dual $\star_{\varphi} \varphi\in \Omega^4 M$ with respect to the induced pseudo-Riemannian metric of signature $(3,4)$ is closed.

This motivates to classify first the seven-dimensional real-analytic manifolds that admit real-analytic cocalibrated $\G_2$- or $\G_2^*$-structures, at least for certain simple subclasses. In a next step, it is then of interest to solve concretely Hitchin's flow equations on these manifolds and compute the holonomy group or give more abstract arguments that the holonomy group is further reduced to a subgroup of $\Spin(7)$ or $\Spin_0(3,4)$, respectively.

A classification of cocalibrated $\G_2$-structures has been
carried out for seven-dimensional compact homogeneous spaces with homogeneous cocalibrated
$\G_2$-structures \cite{R}. We focus on a similar case, namely cocalibrated left-invariant $\G_2$- or $\G_2^*$-structures
on Lie groups $\G$. In this case, $d\star_{\varphi} \varphi =0$ reduces to an algebraic equation
depending only on the structure coefficients of the Lie algebra $\g$ associated to $\G$. Such a structure is then
also called a cocalibrated $\G_2$- or $\G_2^*$-structure on a seven-dimensional Lie algebra.

Due to the vast amount of seven-dimensional real Lie algebras we restrict ourselves further and consider
cocalibrated $\G_2$- and $\G_2^*$-structures on seven dimensional real Lie algebras $\g$ which admit a
six-dimensional Abelian ideal $\uf$. For these Lie algebras, the entire Lie bracket is encoded
in the action of an element $e_7\in \g\backslash \uf$ on $\uf$ and the Lie algebras can be classified
by the complex Jordan normal form of $\ad(e_7)|_{\uf}$. This simple structure allows us to give a full classification of the Lie algebras
in question admitting such structures only in properties of the complex Jordan normal form of $\ad(e_7)|_{\uf}$. We also
classify which seven-dimensional complex Lie algebras $\g$ with complex codimension one Abelian ideal $\uf$ admit cocalibrated $(\G_2)_{\bC}$-structures. 
In a follow-up paper we will solve Hitchin's flow equations concretely for some of those seven-dimensional real Lie algebras, compute the holonomy groups in these cases and prove some general theorems on the possible holonomy groups.

Coming to our main results, let $\g$ be a real or complex seven-dimensional Lie algebra with codimension one Abelian ideal and number consecutively the diagonal elements of a fixed complex Jordan normal form by $\lambda_1,\ldots,\lambda_6$ and the Jordan blocks of the complex Jordan normal form by $1,\ldots, m$, both from the upper left to the lower right. Moreover, denote by $\JB(i)$ for all $i=1,\ldots, 6$, the number of the Jordan block in which the corresponding generalized eigenvector lies. Then our four main theorems can be stated as follows:
\begin{theorem}\label{Th1real}
Let $\g$ be a seven-dimensional real Lie algebra which admits a codimension
one Abelian ideal $\uf$. Let $e_7\in \g\backslash \uf$ and $\omega\in \L^2 \uf^*$ be a non-degenerated two-form
on $\uf$. Then the following are equivalent:
\begin{enumerate}
\item[(a)]
$\g$ admits a cocalibrated $\G_2$-structure.
\item[(b)]
$\g$ admits a cocalibrated $\G_2^*$-structure such that the subspace $\uf$ is non-degenerated with respect to the induced pseudo-Euclidean metric on $\g$.
\item[(c)]
$\ad(e_7)|_{\uf}\in \mathfrak{gl}(\uf)$ is similar under $\GL(\uf)$ to an element in $\mathfrak{sp}(\uf,\omega)$.
\item[(d)]
The complex Jordan normal form of $\ad(e_7)|_{\uf}$ has the property that for all $m\in \mathbb{N}$ and all $\lambda\neq 0$ the number of Jordan blocks of size $m$ with $\lambda$ on the diagonal is the same as the number of Jordan blocks of size $m$ with $-\lambda$ on the diagonal and the number of Jordan blocks of size $2m-1$ with $0$ on the diagonal is even.
\end{enumerate}
\end{theorem}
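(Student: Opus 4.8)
The plan is to route both pairs of equivalences, (a)$\Leftrightarrow$(c) and (b)$\Leftrightarrow$(c), through a single reduction, and to treat (c)$\Leftrightarrow$(d) as a purely linear-algebraic statement. First I fix a basis $e_1,\dots,e_6$ of $\uf$, complete it by $e_7$ and pass to the dual basis $e^1,\dots,e^7$ of $\g^*$. Because $\uf$ is an abelian ideal of codimension one, the Chevalley--Eilenberg differential satisfies $de^7=0$ and $d\b=-e^7\wedge\mathcal{A}\b$ for all $\b\in\L^\bullet\uf^*$, where $\mathcal{A}$ is the derivation of $\L^\bullet\uf^*$ extending the transpose of $A:=\ad(e_7)|_\uf$. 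Two facts will be used throughout: $e^7\wedge\ga$ is closed for every $\ga\in\L^\bullet\uf^*$, and $\mathcal{A}\o=0$ holds exactly when $A\in\mathfrak{sp}(\uf,\o)$. Finally, condition (c) is independent of the choices of $e_7$ and of the non-degenerate $\o$: passing to another element of $\g\backslash\uf$ rescales $A$ by a non-zero scalar, and all non-degenerate two-forms on the six-dimensional space $\uf$ are $\GL(\uf)$-equivalent; neither operation affects (c) or (d).

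Next I use the standard correspondence between $\G_2$-structures on $\uf\op\bR e_7$ and $\SU(3)$-structures on $\uf$ --- and, when $\uf$ is non-degenerate for the induced metric, between $\G_2^*$-structures and $\SU(1,2)$- or $\SL(3,\bR)$-structures on $\uf$ --- which I would recall beforehand. Given a cocalibrated $\G_2$-structure $\vp$ on $\g$, the induced metric is Riemannian, so $\uf$ is non-degenerate; taking a unit normal $n$ of $\uf$ (which changes the relevant endomorphism only by a non-zero factor) one gets $\vp=n^\flat\wedge\o+\rho$ with $(\o,\rho)$ an $\SU(3)$-structure on $\uf$, and $\star_\vp\vp=\tfrac{1}{2}\,\o\wedge\o-n^\flat\wedge\hat\rho$ with $\hat\rho$ the conjugate half of the complex volume form; the $\G_2^*$ case with $\uf$ non-degenerate is the same up to signs and the signature of the metric on $\uf$. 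Since $n^\flat\wedge\hat\rho$ is closed, $d\star_\vp\vp=0$ reduces to $\mathcal{A}(\o\wedge\o)=0$, with $\o$ non-degenerate. Conversely, every non-degenerate $\o\in\L^2\uf^*$ extends to an $\SU(3)$- (resp.\ $\SU(1,2)$-, resp.\ $\SL(3,\bR)$-) structure on $\uf$, and taking $e_7$ orthogonal to $\uf$ of suitable norm makes $\vp:=e^7\wedge\o+\rho$ a $\G_2$- (resp.\ $\G_2^*$-) structure with $\uf$ non-degenerate. Hence (a), and likewise (b), is equivalent to the existence of a non-degenerate $\o\in\L^2\uf^*$ with $\mathcal{A}(\o\wedge\o)=0$.

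The conceptual core is hard Lefschetz in dimension six: wedging with a non-degenerate two-form $\o$ on $\uf$ is a bijection $\L^2\uf^*\to\L^4\uf^*$. Applying this to $\mathcal{A}\o$ and using $\mathcal{A}(\o\wedge\o)=2\,\mathcal{A}\o\wedge\o$ shows $\mathcal{A}(\o\wedge\o)=0\iff\mathcal{A}\o=0\iff A\in\mathfrak{sp}(\uf,\o)$. Together with the previous paragraph and the choice-independence of (c), this gives (a)$\Leftrightarrow$(c)$\Leftrightarrow$(b); for (c)$\Rightarrow$(a) and (c)$\Rightarrow$(b) one picks $g\in\GL(\uf)$ conjugating $A$ into $\mathfrak{sp}(\uf,\o)$, sets $\o':=g^*\o$ so that $\mathcal{A}\o'=0$, and applies the construction of the previous paragraph to $\o'$.

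It remains to establish (c)$\Leftrightarrow$(d), which is the classical criterion for when a $\GL(\uf)$-conjugacy class in $\mathfrak{gl}(\uf)$ meets $\mathfrak{sp}(\uf,\o)$: a real endomorphism is $\GL$-conjugate to an infinitesimally symplectic one precisely when, for every $m\in\bN$ and every $\lambda\neq 0$, the Jordan blocks of size $m$ for $\lambda$ and for $-\lambda$ occur with equal multiplicity, while the Jordan blocks of odd size $2m-1$ for the eigenvalue $0$ occur with even multiplicity --- which is exactly (d). I would cite this, or, since $\dim\uf=6$, derive it from the explicit canonical forms of pairs $(A,\o)$ with $A\in\mathfrak{sp}(\uf,\o)$. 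The step I expect to cost the most work is the second paragraph: making the $\G_2^*\leftrightarrow\SU(1,2)/\SL(3,\bR)$ dictionary on non-degenerate hyperplanes precise, fixing the exact shape of $\star_\vp\vp$ and the correct signatures, and verifying that every non-degenerate two-form on $\bR^6$ extends to a structure of each required type; once that is in place, the hard Lefschetz step and the linear-algebra step (c)$\Leftrightarrow$(d) are short.
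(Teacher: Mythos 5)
Your proposal is correct, and it reaches the same pivotal intermediate statement as the paper --- namely that (a) (resp.\ (b)) is equivalent to the existence of a non-degenerate $\omega\in\L^2\uf^*$ with $d\bigl(\tfrac{1}{2}\omega\wedge\omega\bigr)=0$, which in turn is equivalent to $\ad(e_7)|_{\uf}$ being conjugate into $\mathfrak{sp}(\uf,\omega)$ --- but by a genuinely different route in both technical steps. For the reduction to the four-form $\tfrac{1}{2}\omega^2$, the paper does not use the $\SU(3)$/$\SU(1,2)$/$\SL(3,\bR)$ hyperplane dictionary; it instead computes Westwick's orbit invariants $(\rho,l,m,r)$ of the Hodge duals via dual isomorphisms (Section \ref{invariants}, Proposition \ref{pro:inv}, Lemmas \ref{le:Jrho} and \ref{le:lengthofOmegaG2star}) to show that the $\L^4\Ann{\spa{e_7}}$-component of $\star_\varphi\varphi$ has length three exactly in cases (a) and (b), and that any length-three four-form extends to such a Hodge dual. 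For the closedness condition, the paper determines the full stabilizer group of $\tfrac{1}{2}\omega^2$ in $\GL(\uf)$ (symplectic and anti-symplectic maps, Lemma \ref{le:stabilizers}(a)) and takes its Lie algebra, whereas you use the derivation identity $\mathcal{A}(\omega\wedge\omega)=2\,\mathcal{A}\omega\wedge\omega$ together with the hard Lefschetz isomorphism $\L^2\uf^*\to\L^4\uf^*$; your version is shorter and avoids the group-level computation. Both proofs then invoke the same classical Jordan-form criterion for membership of a conjugacy class in $\mathfrak{sp}(2n,\bR)$ (the paper cites \cite{MMRR},\cite{MX}) for (c)$\Leftrightarrow$(d). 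The trade-off is that your route leans on the (standard, but signature-sensitive) correspondence between $\G_2^{(*)}$-structures restricted to non-degenerate hyperplanes and $\SU(3)$-, $\SU(1,2)$- or $\SL(3,\bR)$-structures, which you correctly flag as the part needing careful verification; the paper's invariant-theoretic machinery is heavier for this theorem alone but is reused essentially unchanged for the null-hyperplane (length-two) case needed in Theorems \ref{Th2real} and \ref{Th2complex}, where your hyperplane dictionary would not apply.
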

\begin{theorem}\label{Th1complex}
Let $\g$ be a seven-dimensional complex Lie algebra which admits a codimension
one Abelian ideal $\uf$. Let $e_7\in \g\backslash \uf$ and $\omega\in \L^2 \uf^*$ be a non-degenerated two-form
on $\uf$. Then the following are equivalent:
\begin{enumerate}
\item[(a)]
$\g$ admits a cocalibrated $(\G_2)_{\bC}$-structure such that the subspace $\uf$ is non-degenerated with respect to the induced non-degenerated complex symmetric bilinear form on $\g$.
\item[(b)]
$\ad(e_7)|_{\uf}\in \mathfrak{gl}(\uf)$ is similar under $\GL(\uf)$ to an element in $\mathfrak{sp}(\uf,\omega)$.
\item[(c)]
The complex Jordan normal form of $\ad(e_7)|_{\uf}$ has the property that for all $m\in \mathbb{N}$ and all $\lambda\neq 0$ the number of Jordan blocks of size $m$ with $\lambda$ on the diagonal is the same as the number of Jordan blocks of size $m$ with $-\lambda$ on the diagonal and the number of Jordan blocks of size $2m-1$ with $0$ on the diagonal is even.
\end{enumerate}
\end{theorem}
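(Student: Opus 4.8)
The plan is to prove $(a)\Leftrightarrow(b)$ by reducing a $(\G_2)_\bC$-structure for which $\uf$ is non-degenerate to an $\SL(3,\bC)$-structure on $\uf$, and to prove $(b)\Leftrightarrow(c)$ by a purely linear-algebraic description of which elements of $\mathfrak{gl}(\uf)$ are $\GL(\uf)$-conjugate into a symplectic Lie algebra. First I would record the basic setup: since $\uf$ is an abelian ideal, the whole bracket is encoded by $f:=\ad(e_7)|_\uf\in\mathfrak{gl}(\uf)$, and, extending $f^*$ to a derivation of $\L^\bullet\uf^*$, one has $de^7=0$ and $d\alpha=-e^7\wedge f^*\alpha$ for every $\alpha\in\L^\bullet\uf^*$. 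Two elementary remarks will be used repeatedly: $\mathfrak{sp}(\uf,\cdot)$ is a linear subspace of $\mathfrak{gl}(\uf)$ invariant under rescaling its defining two-form, and any two non-degenerate two-forms on $\uf$ are $\GL(\uf)$-equivalent (Darboux); together they show that $(b)$ is equivalent to ``$f\in\mathfrak{sp}(\uf,\widetilde\omega)$ for some non-degenerate $\widetilde\omega\in\L^2\uf^*$''.

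For $(a)\Rightarrow(b)$: given a cocalibrated $(\G_2)_\bC$-structure $\varphi$ with $\uf$ non-degenerate for the induced symmetric bilinear form, I would replace $e_7$ by a unit normal $v$ of $\uf$; this only rescales $f$, which is harmless by the remarks above, and $dv^\flat=0$ because $[v,\uf]\subseteq\uf$ while $v\perp\uf$. With respect to the splitting $\g=\uf\oplus\bC v$ one writes $\varphi=\rho+\omega\wedge v^\flat$ with $(\omega,\rho)$ an $\SL(3,\bC)$-structure on $\uf$ and $\omega$ non-degenerate, and correspondingly $\star_\varphi\varphi=\tfrac12\,\omega\wedge\omega-\widehat\rho\wedge v^\flat$, where $\widehat\rho\in\L^3\uf^*$ is the companion three-form of the $\SL(3,\bC)$-structure (the complex analogue of $\Im\,\Omega$; the precise signs are immaterial). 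Since $dv^\flat=0$ and $d\widehat\rho\in v^\flat\wedge\L^3\uf^*$, the term $d(\widehat\rho\wedge v^\flat)$ vanishes, leaving $d\star_\varphi\varphi=-\,v^\flat\wedge\big(\omega\wedge(f^*\omega)\big)$. Hence $\varphi$ being cocalibrated is equivalent to $\omega\wedge(f^*\omega)=0$ in $\L^4\uf^*$; as $\uf$ is six-dimensional and $\omega$ non-degenerate, the Lefschetz map $\L^2\uf^*\to\L^4\uf^*$, $\beta\mapsto\omega\wedge\beta$, is an isomorphism, so $f^*\omega=0$, i.e.\ $f\in\mathfrak{sp}(\uf,\omega)$, which gives $(b)$. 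Conversely, for $(b)\Rightarrow(a)$ I would choose a non-degenerate $\widetilde\omega$ with $f\in\mathfrak{sp}(\uf,\widetilde\omega)$, extend it to an $\SL(3,\bC)$-structure $(\widetilde\omega,\rho)$ on $\uf$ (always possible), and set $\varphi:=\rho+\widetilde\omega\wedge e^7$: this is a $(\G_2)_\bC$-structure with $\uf$ non-degenerate, and the same computation gives $d\star_\varphi\varphi=-e^7\wedge\big(\widetilde\omega\wedge(f^*\widetilde\omega)\big)=0$.

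For $(b)\Leftrightarrow(c)$, which is a statement about a single operator and independent of the Lie-algebra context, I would argue that $f\in\mathfrak{gl}(\uf)$ lies in some $\mathfrak{sp}(\uf,\widetilde\omega)$ iff, in matrix terms, $f$ is conjugate to $-f^{\mathsf{T}}$ by a skew-symmetric invertible matrix. Comparing complex Jordan forms, conjugacy of $f$ and $-f$ (equivalently of $f$ and $-f^{\mathsf{T}}$) already forces the eigenvalue-and-block data to be symmetric under $\lambda\mapsto-\lambda$; one then checks that a skew-symmetric intertwiner can be built block by block on every pair $J_m(\lambda)\oplus J_m(-\lambda)$ with $\lambda\neq0$ and on every pair $J_m(0)\oplus J_m(0)$, whereas on a single block $J_{2m-1}(0)$ it is obstructed (such a block carries a non-degenerate invariant \emph{symmetric} form, not an alternating one). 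Counting blocks turns this into exactly the parity conditions stated in $(c)$.

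I expect the two genuinely delicate points to be, first, pinning down the complex-bilinear $\SL(3,\bC)$- and $(\G_2)_\bC$-normal forms and the exact shape of $\star_\varphi\varphi$ in terms of $(\omega,\rho)$ --- here one cannot simply transcribe the compact $\SU(3)$ formulas, since there is no complex conjugation available and the induced bilinear form need not be definite --- and, second, the explicit block-by-block construction, and obstruction, of the invariant alternating form in the $(c)\Rightarrow(b)$ direction, where the parity of the nilpotent Jordan blocks with eigenvalue $0$ is exactly what can go wrong.
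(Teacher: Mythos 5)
Your proposal is correct, but for the equivalence (a)$\Leftrightarrow$(b) it takes a genuinely different route from the paper. The paper works throughout with the four-form $\Psi=\star_{\varphi_{\bC}}\varphi_{\bC}$: it decomposes $\Psi=\Omega_1\wedge e^7+\Omega_2$, uses Westwick's orbit invariants (rank, length, and the numbers $r,m$) to show that $l(\Omega_2)=3$ exactly when $\uf$ is non-degenerate, proves that every length-three four-form on $\uf$ occurs as the $\Omega_2$-part of some Hodge dual, and then computes the full stabilizer group of $\tfrac12\omega^2$ (symplectic and anti-symplectic maps), whose Lie algebra is $\mathfrak{sp}(\uf,\omega)$. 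You instead decompose the three-form itself relative to a unit normal $v$ of the non-degenerate hyperplane $\uf$, obtaining the $\SL(3,\bC)$-reduction $\varphi=\rho+\omega\wedge v^\flat$, and reduce cocalibration to $\omega\wedge(f.\omega)=0$, killed by the Lefschetz isomorphism $\L^2\uf^*\to\L^4\uf^*$. The two arguments meet at the same point ($f\in\mathfrak{sp}(\uf,\omega)$, with $\tfrac12\omega^2$ being precisely the paper's length-three four-form), but yours is infinitesimal and avoids the group-level stabilizer computation and the Westwick machinery, at the price of having to verify the complex-bilinear $\SL(3,\bC)$ normal forms and the identity $\star_\varphi\varphi=\tfrac12\omega^2\pm\widehat\rho\wedge v^\flat$ --- which do hold, by transitivity of $(\G_2)_{\bC}$ on the quadric $\{g_{\bC}(v,v)=1\}$ and a computation in the adapted basis, exactly as in the $\SU(3)\subset\G_2$ case. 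Note that the paper's heavier four-form apparatus is not gratuitous: it is what lets the same framework treat the degenerate-hyperplane case (length-two four-forms) needed for Theorems \ref{Th2real} and \ref{Th2complex}, which your unit-normal decomposition cannot reach. For (b)$\Leftrightarrow$(c) the paper simply cites the known description of complex Jordan forms of elements of $\mathfrak{sp}(2n,\bF)$ (Proposition \ref{pro:JNF}); your sketch via skew-symmetric intertwiners between $f$ and $-f^{\mathsf{T}}$, with the parity obstruction on odd nilpotent blocks, is the standard proof of that citation and is sound, though you would still need to carry out the block-by-block construction you defer.
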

\begin{theorem}\label{Th2real}
Let $\g$ be a seven-dimensional real Lie algebra which admits a codimension
one Abelian ideal. Let $e_7\in \g\backslash \uf$, $V_2$ be a two-dimensional subspace of $\uf$, $V_4$ be a four-dimensional subspace of $\uf$ such that $\uf=V_2\oplus V_4$ and $\omega\in \L^2 V_4^*$ be a non-degenerated two-form on $V_4$. Then the following are equivalent:
\begin{enumerate}
\item[(a)]
$\g$ admits a cocalibrated $\G_2^*$-structure. 
\item[(b)]
$\ad(e_7)|_{\uf}\in \mathfrak{gl}(\uf)$ is similar under $\GL(\uf)$ to an element in
\begin{equation*}
\{f\in \mathfrak{gl}(\uf)| f|_{V_2}=f_2+h,\, f_2\in \mathfrak{gl}(V_2),h\in \Hom(V_2,V_4),\,\, f|_{V_4}=-\frac{\tr(f_2)}{2}\id_{V_4}+f_4,\,f_4\in \mathfrak{sp} (V_4,\omega_2)\}.
\end{equation*}
\item[(c)]
There exists a partition of $\{1,\ldots,6\}$ into three subsets $I_1$, $I_2$, $I_3$, each of cardinality two, such that the following is true:
\begin{enumerate}
\item[(i)]
$\sum_{i\in I_1} \lambda_i=\sum_{i\in I_2} \lambda_i=-\sum_{i\in I_3} \lambda_i$.
\item[(ii)] 
If there are $i_1\in  I_1$, $i_2\in I_2$ such that $\JB(i_1)=\JB(i_2)$ then $\lambda_{i_1}=\lambda_{i_2}=-\frac{\sum_{i\in I_3} \lambda_i}{2}$ or $\JB(j_1)=\JB(j_2)$ for the uniquely determined $j_k\in I_k$ such that $\{i_k,j_k\}=I_k$, $k=1,2$.
\item[(iii)]
If there exists $i_2\in I_2$ such that $\JB(j)=\JB(i_2)$ for all $j\in I_1$ or if there exists $i_1\in I_1$ such that $\JB(j)=\JB(i_1)$ for all $j\in I_2$, then $\lambda_j=-\frac{\sum_{i\in I_3} \lambda_i}{2}$ for all $j\in I_1\cup I_2$ and $\JB(j)=\JB(k)$ for all $j,k\in I_1\cup I_2$.
\end{enumerate}
\end{enumerate}
\end{theorem}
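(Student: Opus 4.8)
The plan is to set up the problem exactly as in the proofs of Theorems~\ref{Th1real} and~\ref{Th1complex}: fix $e_7\in\g\backslash\uf$, identify $\g^*$ with $\uf^*\oplus\bR e^7$, and write the $\G_2^*$-structure as $\varphi\in\L^3\g^*$ in the normal form adapted to the splitting $\g=\uf\oplus\bR e_7$, so that $\varphi=\varphi_0+\omega_0\wedge e^7$ for a three-form $\varphi_0$ and two-form $\omega_0$ on $\uf$. The closedness condition $d\star_\varphi\varphi=0$ then unwinds, via the Lie algebra differential $d e^i = -\sum c^i_{jk}e^{jk}$ (which here reduces to $de^7=0$ and $de^i=-e^7\wedge (\ad(e_7)|_{\uf})^* e^i$ on $\uf^*$), into a single algebraic equation on $\uf$ that I expect to read roughly as: $\star_\varphi\varphi$ restricted to $\uf$ is closed under $\ad(e_7)|_{\uf}$ acting as a derivation, i.e.\ a condition of the form $(\ad(e_7)|_{\uf})\cdot(\text{certain }4\text{-form on }\uf)=0$ together with a compatibility on the $e^7$-component. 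First I would therefore carry out this reduction carefully, using the stabilizer description of $\G_2^*$ inside $\GL(7,\bR)$ and the fact that $\star_\varphi\varphi$ is a polynomial in $\varphi$, to isolate the precise algebraic constraint; call the resulting condition $(\star)$.

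Next I would establish the equivalence (a)$\Leftrightarrow$(b). The key geometric input is the classification of the $\GL(\uf)$-orbit structure of pairs $(\varphi_0,\omega_0)$ giving rise to a genuine $\G_2^*$-structure on $\g$: when $\uf$ may be degenerate with respect to the induced metric of signature $(3,4)$, the relevant stabilizer subgroup of the $6$-dimensional slice is no longer (a real form of) $\Sp(6)$ as in Theorem~\ref{Th1real}, but rather the parabolic-type subgroup whose Lie algebra is the one written out in (b): block-upper-triangular with a $2\times2$ block $f_2$, an arbitrary $\Hom(V_2,V_4)$-part $h$, and a $\mathfrak{sp}(V_4,\omega)$-part on $V_4$ twisted by the scalar $-\tfrac{\tr(f_2)}2$. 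I would derive this by analyzing which $6$-dimensional subspaces arise as $\uf$ for a $\G_2^*$-structure according to the signature/degeneracy of $\uf\cap\uf^{\perp}$ — the generic (non-degenerate) case being handled by Theorem~\ref{Th2real}'s counterpart to \ref{Th1real}, and the degenerate cases forcing a $2$-dimensional totally isotropic direction $V_2$ and a complementary non-degenerate $V_4$. Once this normal form for admissible $(\varphi_0,\omega_0)$ is in hand, condition $(\star)$ becomes exactly the statement that $\ad(e_7)|_\uf$ lies in the $\GL(\uf)$-conjugate of that Lie subalgebra, which is (b).

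The equivalence (b)$\Leftrightarrow$(c) is then a purely linear-algebra translation: I would take an element $f$ of the model Lie algebra in (b) and compute its complex Jordan normal form. The scalar $-\tfrac{\tr(f_2)}2$ on $V_4$ means that, writing $c:=-\sum_{i\in I_3}\lambda_i/2=\tr(f_2)/2$, the $V_4$-part $-c\,\id+f_4$ with $f_4\in\mathfrak{sp}(V_4,\omega)$ has spectrum symmetric about $-c$; the $V_2$-part $f_2$ contributes two eigenvalues summing to $2c$; so the six eigenvalues split into three pairs each summing to $\mp 2c$ up to sign, which is (i) after relabeling $I_1,I_2$ as the $V_4$-pairs and $I_3$ — or rather, taking into account the $\Sp$-symmetry, the pair-sum condition. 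Conditions (ii) and (iii) are the obstructions coming from the Jordan-block structure: within $\mathfrak{sp}(V_4,\omega)$ a Jordan block of a given size at eigenvalue $\mu-c$ must be matched by one at $-\mu-c$ unless $\mu=0$, and the off-diagonal $h$-part can merge a $V_2$-block with a $V_4$-block only in the constrained ways (ii)--(iii) record; these are the standard facts about which nilpotent-plus-semisimple perturbations preserve a given conjugacy class, combined with the $\mathfrak{sp}$-restriction. The main obstacle I anticipate is precisely this last bookkeeping: proving that every Jordan normal form satisfying (i)--(iii) is \emph{realized} by some $f$ in the model algebra (the ``hard'' direction (c)$\Rightarrow$(b)) requires an explicit and somewhat delicate construction, gluing together $\mathfrak{sp}$-standard pieces on $V_4$ with a $2\times2$ block on $V_2$ and a carefully chosen $h\in\Hom(V_2,V_4)$ to produce prescribed mixed Jordan blocks — the case analysis in (ii) and (iii) suggests several sub-cases (block sizes of $V_2$-part $1{+}1$ versus $2$, and whether its eigenvalue equals $-c$) each needing its own model. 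I would isolate that construction as a separate lemma and treat the rest as comparatively routine.
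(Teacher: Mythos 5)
Your overall architecture (reduce to an algebraic condition on $\uf$, identify the relevant stabilizer algebras, then translate into Jordan-form combinatorics) matches the paper's, but there are two places where the proposal as written has real gaps.

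First, the step ``(a) $\Leftrightarrow$ (b)'' is not closed. What your reduction actually yields is: $\g$ admits a cocalibrated $\G_2^*$-structure if and only if $\ad(e_7)|_{\uf}$ is similar to an element of $\mathfrak{sp}(\uf,\omega_6)$ (the case where $\uf$ is non-degenerate, i.e.\ the $\uf$-component of $\star_\varphi\varphi$ has length $3$) \emph{or} to an element of the parabolic-type algebra in (b) (the degenerate case, length $2$). Condition (b) mentions only the second alternative, so to conclude (a) $\Rightarrow$ (b) you must additionally prove that every endomorphism similar to an element of $\mathfrak{sp}(6,\bR)$ is also similar to an element of the parabolic algebra. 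This is not automatic — it is exactly the non-trivial combinatorial step the paper carries out (its ``first step'': showing that the Jordan-form condition of Theorem \ref{Th1real}(d) implies condition (c) of Theorem \ref{Th2real} by explicitly pairing Jordan blocks into $I_1,I_2,I_3$). Your sentence ``condition $(\star)$ becomes exactly the statement that $\ad(e_7)|_\uf$ lies in the $\GL(\uf)$-conjugate of that Lie subalgebra'' silently discards the non-degenerate branch. Relatedly, for (b) $\Rightarrow$ (a) you need the \emph{realization} statement that every closed four-form of length $2$ on $\Ann{\spa{e_7}}$ actually occurs as the $\uf$-component of the Hodge dual of some $\G_2^*$-structure on $\g$; the paper gets this from the invariant $r(\star_{\tilde\varphi}\tilde\varphi)=2$ (computed via Westwick's classification and the behaviour of the invariants $(\rho,l,m,r)$ under dual isomorphisms) together with the normal form for $(n-2)$-vectors of length $2$. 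Your plan names no mechanism for either the computation of which lengths occur or for this converse, and ``$\star_\varphi\varphi$ is a polynomial in $\varphi$'' will not by itself deliver them.

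Second, on (b) $\Leftrightarrow$ (c): your anticipated ``delicate gluing'' for (c) $\Rightarrow$ (b) is more work than needed — since $h\in\Hom(V_2,V_4)$ and $f_2\in\mathfrak{gl}(V_2)$ are unconstrained, the model algebra is just ``preserves a $4$-plane on which it acts as $-\tfrac{\tr(f_2)}{2}\id+\mathfrak{sp}(4)$'', so it suffices to take $V_4$ spanned by the Jordan basis vectors indexed by $I_1\cup I_2$ (after arranging that no index of $I_3$ precedes one of $I_1\cup I_2$ inside a common block) and verify via the symplectic Jordan-form criterion that the shifted restriction lies in $\mathfrak{sp}(4)$. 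The genuinely delicate direction is rather (b) $\Rightarrow$ (c): the Jordan structure of $f$ is not simply the union of the Jordan structures of $f|_{V_4}$ and $f_2$, and one needs a theorem on Jordan bases adapted to invariant subspaces (the paper cites Gohberg--Lancaster--Rodman) to control how blocks of $f|_{V_4}$ sit inside blocks of $f$; your appeal to ``standard facts about nilpotent-plus-semisimple perturbations'' does not pin this down. Also, a small factual point: when $\uf$ is degenerate, $\uf\cap\uf^{\perp}$ is one-dimensional, not two-dimensional, so the subspace $V_2$ is not characterized as a totally isotropic plane; in the paper it is extracted canonically from the length-two four-form itself.
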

\begin{theorem}\label{Th2complex}
Let $\g$ be a seven-dimensional complex Lie algebra which admits a codimension
one Abelian ideal. Let $e_7\in \g\backslash \uf$, $V_2$ be a two-dimensional subspace of $\uf$, $V_4$ be a four-dimensional subspace of $\uf$ such that $\uf=V_2\oplus V_4$ and $\omega\in \L^2 V_4^*$ be a non-degenerated two-form on $V_4$. Then the following are equivalent:
\begin{enumerate}
\item[(a)]
$\g$ admits a cocalibrated $(\G_2)_{\bC}$-structure. 
\item[(b)]
$\ad(e_7)|_{\uf}\in \mathfrak{gl}(\uf)$ is similar under $\GL(\uf)$ to an element in
\begin{equation*}
\{f\in \mathfrak{gl}(\uf)| f|_{V_2}=f_2+h,\, f_2\in \mathfrak{gl}(V_2),h\in \Hom(V_2,V_4),\,\, f|_{V_4}=-\frac{\tr(f_2)}{2}\id_{V_4}+f_4,\,f_4\in \mathfrak{sp} (V_4,\omega_2)\}.
\end{equation*}
\item[(c)]
There exists a partition of $\{1,\ldots,6\}$ into three subsets $I_1$, $I_2$, $I_3$, each of cardinality two, such that the following is true:
\begin{enumerate}
\item[(i)]
$\sum_{i\in I_1} \lambda_i=\sum_{i\in I_2} \lambda_i=-\sum_{i\in I_3} \lambda_i$.
\item[(ii)] 
If there are $i_1\in  I_1$, $i_2\in I_2$ such that $\JB(i_1)=\JB(i_2)$ then $\lambda_{i_1}=\lambda_{i_2}=-\frac{\sum_{i\in I_3} \lambda_i}{2}$ or $\JB(j_1)=\JB(j_2)$ for the uniquely determined $j_k\in I_k$ such that $\{i_k,j_k\}=I_k$, $k=1,2$.
\item[(iii)]
If there exists $i_2\in I_2$ such that $\JB(j)=\JB(i_2)$ for all $j\in I_1$ or if there exists $i_1\in I_1$ such that $\JB(j)=\JB(i_1)$ for all $j\in I_2$, then $\lambda_j=-\frac{\sum_{i\in I_3} \lambda_i}{2}$ for all $j\in I_1\cup I_2$ and $\JB(j)=\JB(k)$ for all $j,k\in I_1\cup I_2$.
\end{enumerate}
\end{enumerate}
\end{theorem}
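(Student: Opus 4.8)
The plan is to deduce Theorem \ref{Th2complex} as the complex analogue of Theorem \ref{Th2real}, reusing the structural reductions that must already be in place, and then to note where the proof genuinely simplifies over the reals. First I would recall the standard dictionary between seven-dimensional Lie algebras $\g$ with a codimension one Abelian ideal $\uf$ and endomorphisms $\ad(e_7)|_{\uf}\in\mathfrak{gl}(\uf)$, observing that changing $e_7$ rescales this endomorphism and changing the basis of $\uf$ conjugates it, so that the relevant invariant is the $\GL(\uf)$-similarity class (equivalently the complex Jordan normal form, now over the algebraically closed ground field). A cocalibrated $(\G_2)_{\bC}$-structure is a complex three-form $\vp\in\L^3\g^*$ lying in the open $\GL(7,\bC)$-orbit modelled on the complexification of the $\G_2$ (equivalently $\G_2^*$) form, with $d\star_{\vp}\vp=0$; since over $\bC$ the forms defining $\G_2$ and $\G_2^*$ lie in the \emph{same} complex orbit, there is only one condition to analyze, which is why (a) here plays the role of both (a) and (b) of Theorem \ref{Th1real}/\ref{Th2real} simultaneously.

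Next I would carry out the equivalence (a)$\Leftrightarrow$(b). For (b)$\Rightarrow$(a) one exhibits an explicit cocalibrated three-form: writing $\uf=V_2\op V_4$ with the stated decomposition of $f=\ad(e_7)|_{\uf}$, take $\vp$ to be a complex analogue of the standard product-type $\G_2$-form built from a volume form on $\spa{e_7}\op V_2$ and the symplectic form $\o$ on $V_4$, and compute $d\star_{\vp}\vp$ using $d\alpha=-\alpha\circ f$ on $\uf^*$ together with $d e^7=0$; the block shape $f|_{V_4}=-\tfrac{\tr(f_2)}{2}\id+f_4$ with $f_4\in\mathfrak{sp}(V_4,\o)$ is exactly what makes the relevant four-form closed, since the trace term is compensated by the action on $e^7\wedge(\text{2-form on }V_2)$ and the symplectic term kills $f_4\cdot\o$. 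For (a)$\Rightarrow$(b) one starts from an arbitrary cocalibrated $\vp$, uses that the stabilizer of the complex model form acts transitively enough to normalize the induced complex bilinear form $\l\cdot,\cdot\r_\vp$ on $\g$ — here I would use that $e_7$ may be adjusted within $\g\setminus\uf$ and scaled so that a convenient splitting $\g=\spa{e_7}\op\uf$ and $\uf=V_2\op V_4$ is adapted to $\vp$ — and then read off from $d\star_\vp\vp=0$, expanded in this adapted basis, precisely the block constraints in (b). This is the step I expect to be the main obstacle: controlling the $(\G_2)_{\bC}$-stabilizer action over $\bC$ well enough to put $\vp$ in a standard form relative to which $\ad(e_7)|_{\uf}$ visibly has the block structure of (b), and handling the degenerate sub-cases (e.g. when $\o$ restricted to a $\vp$-isotropic piece degenerates) uniformly; over $\bC$ this is cleaner than over $\bR$ because no signature bookkeeping intervenes and a single complex orbit suffices, but one still must check that the normalization does not secretly force extra relations.

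Finally I would prove (b)$\Leftrightarrow$(c) purely linear-algebraically, translating the coordinate-free description of the Lie subalgebra in (b) into Jordan-block combinatorics. The subset $I_3$ indexes the generalized eigenvectors that, after conjugation, lie in $V_4$; the condition $f_4\in\mathfrak{sp}(V_4,\o)$ forces the eigenvalues occurring in $V_4$ to be symmetric about $0$, which gives $\sum_{i\in I_3}\la_i$ equal to the sum over its symmetric partner and, together with $f|_{V_4}=-\tfrac{\tr f_2}{2}\id+f_4$, ties the average eigenvalue on $V_2$ to $-\tfrac12\sum_{i\in I_3}\la_i$; conditions (i)--(iii) then encode respectively the eigenvalue balance, the constraint that two generalized eigenvectors sharing a Jordan block cannot be split between $V_2$ and $V_4$ unless the eigenvalue is the forced one (nilpotent-on-$V_2$ case), and the analogous constraint when an entire Jordan block would have to straddle the decomposition. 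I would assemble (c) from (b) by choosing the partition according to which block each generalized eigenvector is sent to, and conversely build the decomposition $V_2\op V_4$ and the endomorphism $f$ from a partition satisfying (i)--(iii), using that over $\bC$ the Jordan form is attained so no irreducible real $2\times 2$ blocks complicate the matching. The argument is identical to the real case (Theorem \ref{Th2real}) modulo dropping reality constraints, so I would state it as such and only indicate the differences.
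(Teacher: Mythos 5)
Your skeleton matches the paper's at the top level (reduce to the endomorphism $f=\ad(e_7)|_{\uf}$, build an explicit closed form for (b)$\Rightarrow$(a), translate (b) into Jordan combinatorics for (c)), but the direction (a)$\Rightarrow$(b) -- which you yourself flag as ``the main obstacle'' -- is left without a working mechanism, and that is precisely where the paper's real content lies. The paper does \emph{not} normalize $\varphi$ by a stabilizer action. Instead it computes the invariant quadruple $(\rho,l,r,m)=(7,4,2,2)$ for the Hodge dual of a $(\G_2)_{\bC}$-structure (via Westwick's classification of trivector orbits transported through a dual isomorphism), so that in the decomposition $\star_{\varphi}\varphi=\Omega_1\wedge e^7+\Omega_2$ with $\Omega_2\in\L^4\Ann{\spa{e_7}}$ the invariant $r=2$ forces $l(\Omega_2)\in\{2,3\}$, and closedness of $\star_\varphi\varphi$ forces $d\Omega_2=0$. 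By Lemma \ref{le:derivative}(iii) this means $f$ lies in the Lie algebra of the stabilizer of $\Omega_2$, and Lemma \ref{le:stabilizers} identifies that Lie algebra as $\mathfrak{sp}(\uf,\omega_6)$ when $l(\Omega_2)=3$ and as exactly the set in (b) when $l(\Omega_2)=2$. Without some substitute for this invariant-theoretic input, ``read off the block constraints in an adapted basis'' does not get off the ground: you have no a priori control on how $\star_\varphi\varphi$ sits relative to $\uf$.

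Even granting that reduction, your proposal silently skips the length-three case: if $\uf$ is non-degenerate for the induced bilinear form, the argument only yields that $f$ is similar to an element of $\mathfrak{sp}(\uf,\omega_6)$, and one must still prove that every such $f$ is \emph{also} similar to an element of the algebra in (b). In the paper this is a separate, genuinely combinatorial step (the ``first step'' of the final proof), relying on the precise Jordan-form characterization of $\mathfrak{sp}(2n,\bF)$ in Proposition \ref{pro:JNF} -- including the parity condition on odd-size nilpotent blocks -- and on the compatibility of Jordan bases with invariant subspaces (\cite{GLR}). Your sketch of (b)$\Leftrightarrow$(c) gestures at the right ideas but is imprecise where precision matters: for instance $I_3$ indexes the eigenvalues on the two-dimensional piece $V_2$ (it has cardinality two), not the generalized eigenvectors lying in $V_4$ as you state, and conditions (ii)--(iii) encode exactly the block-splitting constraints that the cited Jordan-basis extension theorem controls. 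As written, the proposal is a plausible outline with the two hardest steps missing rather than a proof.
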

We like to point out two interesting consequences of the Theorems \ref{Th1real} - \ref{Th2complex}:

Firstly, Theorem \ref{Th1real} states that if a real seven-dimensional Lie algebra with codimension one Abelian ideal admits a cocalibrated $\G_2$-structure, then it also admits a cocalibrated $\G_2^*$-structure. It would be of interest to know if this is a general feature of seven-dimensional Lie algebras. We do not think so but are not able to give a concrete counterexample.

Secondly, the conditions for the existence of a cocalibrated $\G_2^*$-structure on a real seven-dimensional Lie algebra with codimension one Abelian ideal are completely analogously to the ones for a $(\G_2)_{\bC}$-structure on a complex seven-dimensional Lie algebra with codimension one Abelian ideal. In particular, a real seven-dimensional Lie algebra with codimension one Abelian ideal admits a cocalibrated $\G_2^*$-structure if and only if its complexification admits a cocalibrated $(\G_2)_{\bC}$-structure. Again it would be of interest to know if this is a general feature.
 
The proof of the Theorems \ref{Th1real} - \ref{Th2complex} focuses directly on the Hodge dual of a $\G_2$-, $\G_2^*$- or $(\G_2)_{\bC}$-structure. Therefore, after recalling the basic definitions needed in this article and giving a classification of $n$-dimensional Lie algebras with codimension one Abelian ideals in section \ref{preliminaries}, we compute in section \ref{invariants} the values of certain algebraic invariants for the orbits of all Hodge duals in a seven-dimensional real or complex vector space. These algebraic invariants have been used by Westwick \cite{W2} to classify the orbits of three-vectors in seven real dimensions. For the computation we use this classification and a similar classification by Westwick \cite{W1} of the orbits of complex three-vectors up to dimension eight. The concrete values of the mentioned invariants for the Hodge duals and the particular structure of seven-dimensional Lie algebras $\g$ with codimension one Abelian ideals $\uf$ allow us in subsection \ref{firstreduction} to equivalently reformulate our problem in the way that the linear operator $\ad(e_7)|_{\uf}$ for some $e_7\in \g\backslash \uf$ has to be similar to an element in the Lie algebras associated to the stabilizer groups of certain four-forms on $\uf$. This establishes the equivalence of (a)-(c) in Theorem \ref{Th1real}, of (a) and (b) in Theorem \ref{Th1complex} and almost the equivalence of (a) and (b) in Theorem \ref{Th2real} and in Theorem \ref{Th2complex}. Finally, in subsection \ref{secondreduction} we use the well-known results on the structure of complex Jordan normal forms of elements in the symplectic Lie algebras $\mathfrak{sp}(2n,\bR)$ and $\mathfrak{sp}(2n,\bC)$ to reformulate everything totally in properties of the complex Jordan normal form of $\ad(e_7)|_{\uf}$ and finish the proof of Theorem \ref{Th1real}-\ref{Th2complex}.

\section{Preliminaries}\label{preliminaries}

\subsection{$\G_2^{(*)}$-structures on vector spaces} \label{G2vs}
In this subsection, we define $\G_2$-structures, $\G_2^*$-structures and $(\G_2)_{\bC}$-structures on
seven-dimensional real or complex vector spaces $V$. We recall how these structures induce in a natural way a Euclidean metric, a pseudo-Euclidean metric of signature $(3,4)$ or a non-degenerated complex symmetric bilinear form on $V$, respectively and how a Hodge star operator on $V$ can then naturally be defined. Moreover, we remind the reader of the connection of $\G_2$-, $\G_2^*$-structures or $(\G_2)_{\bC}$-structures to the imaginary octonions, imaginary split-octonions or imaginary complex octonions, respectively. We mainly follow \cite{Br} and \cite{CLSS}, to which we also refer for the proofs of all mentioned facts and more details. Note that we adopted the definition of a $\G_2$ and a $\G_2^*$-structure from \cite{CLSS}. We show that it is in accordance with the definition in \cite{Br}.

\begin{definition}
Let $V$ be a seven-dimensional real vector space.
A \emph{$\G_2$-structure} on $V$ is a three-form $\varphi\in \L^3 V^*$
such that there exists a basis $e_1,\ldots,e_7$ of $V$ with
\begin{equation*}
\varphi=e^{127}+e^{347}+e^{567}+e^{135}-e^{146}-e^{236}-e^{245}.
\end{equation*}
Hereby, $e^1,\ldots,e^7\in V^*$ denotes the dual basis of $e_1,\ldots,e_7$.
We call the seven-tuple $(e_1,\ldots e_7)\in V^7$ an \emph{adapted basis}
for the $\G_2$-structure $\varphi$.

A three-form $\tilde{\varphi}\in \L^3 V^*$ is called a
\emph{$\G_2^*$-structure} on $V$ if there exists a basis
$f_1,\ldots,f_7$ of $V$ with 
\begin{equation*}
\tilde{\varphi}=-f^{127}-f^{347}+f^{567}+f^{135}-f^{146}-f^{236}-f^{245}.
\end{equation*}
Again, $f^1,\ldots,f^7\in V^*$ denotes the dual basis of
$f_1,\ldots,f_7$. $(f_1,\ldots,f_7)\in V^7$ is called an
\emph{adapted basis} for the $\G_2^*$-structure $\tilde{\varphi}$.

If we do not want to specify if $\varphi$ is a $\G_2$- or a
$\G_2^*$-structure on $V$ we simply speak of a \emph{$\G_2^{(*)}$-structure}
on $V$.
\end{definition}
\begin{definition}
Let $W$ be a seven-dimensional complex vector space. Then a pair $(\varphi_{\bC},\vol_{\bC})\in \L^3 W^*\times \L^7 W^*$ consisting of a complex volume form $0\neq \vol_{\bC}\in \L^7 W^*$ and a complex three-form $\varphi_{\bC}\in \L^3 W^*$ is called a \emph{$(\G_2)_{\bC}$-structure} if there exists a complex basis $E_1,\ldots,E_7$ of $W$ such that $\vol_{\bC}=E^{1234567}$ and
\begin{equation*}
\varphi_{\bC}=E^{127}+E^{347}+E^{567}+E^{135}-E^{146}-E^{236}-E^{245}.
\end{equation*}
Again $E^1,\ldots,E^7$ denotes the dual basis of $E_1,\ldots,E_7$ and $(E_1,\ldots,E_7)\in W^7$ is called an \emph{adapted basis} for the $(\G_2)_{\bC}$-structure $(\varphi_{\bC},\vol_{\bC})$.
\end{definition}
\begin{remark}\label{re:g2*open}
\begin{itemize}
\item
By definition, the set of all $\G_2$-structures on a seven-dimensional real vector space $V$ and also the set of all $\G_2^*$-structures
on $V$ forms an orbit under the natural action of $\GL(V)$ on $\L^3 V^*$. The stabilizers of these orbits are $\G_2$ and $\G_2^*$, respectively. Thereby, $\G_2$ is the simply-connected compact real form of the complex simply-connected simple Lie group $(\G_2)_{\bC}$ and $\G_2^*$ is the split real form of $(\G_2)_{\bC}$ with fundamental group $\mathbb{Z}_2$. Since $\dim{G_2}=\dim{G_2^*}=14$, these two orbits are open. In fact, they are the only two open orbits under the action of $\GL(V)$ \cite{Hi}. So, in Hitchin's terminology, $\G_2^{(*)}$-structures are \emph{stable} forms \cite{Hi}.
\item
Similarly, the set of all $(\G_2)_{\bC}$-structures $(\varphi_{\bC},vol_{\bC})$ on a complex vector space $W$ forms an orbit in $\Lambda^3 W^*\times \Lambda^7 W^*$ under the natural action of the complex general linear group $\GL(W)$. The stabilizer of $(\varphi_{\bC},vol_{\bC})$ is the stabilizer of $\varphi_{\bC}$ in $\SL(W)$, which is given by the complex simply-connected simple Lie group $(\G_2)_{\bC}$. If we have a three-form $\varphi_{\bC}\in \Lambda^3 W$ for which there exists a basis $E_1,\ldots, E_7$ with $\varphi_{\bC}=E^{127}+E^{347}+E^{567}+E^{135}-E^{146}-E^{236}-E^{245}$, then we can always construct a $(\G_2)_{\bC}$-structure by simply setting $\vol_{\bC}:=E^{1234567}$. Note that this does depend on the chosen complex basis $E_1,\ldots,E_7$. Namely, $F_1:=\frac{1}{\xi}E_1,\ldots, F_7:=\frac{1}{\xi} E_7$ for $\xi\in \bC$, $\xi^3=1$, is also a complex basis such that $\varphi_{\bC}=F^{127}+F^{347}+F^{567}+F^{135}-F^{146}-F^{236}-F^{245}$ giving us $\xi \vol_{\bC}$ as volume form. Note that the set of all such three-forms $\varphi_{\bC}$ forms an open orbit in $\L^3 W^*$ under the natural action of $\GL(W)$.
\item
A $\G_2^{(*)}$-structure $\varphi$ on a real vector space $V$ induces a $(\G_2)_{\bC}$-structure $(\varphi_{\bC},vol_{\bC})$ on the complexification such that $\varphi_{\bC}$ is the complex-linear extension of $\varphi$ and such that $vol_{\bC}:=e^{1234567}$, where $(e_1,\ldots,e_7)$ is an adapted basis for the $\G_2^{(*)}$-structure $\varphi$. Note therefore that if $\varphi$ is a $\G_2$-structure, then $(e_1,\ldots,e_7)$ is also an adapted (complex) basis for $(\varphi_{\bC},vol_{\bC})$ while if $\varphi$ is a $\G_2^*$-structure, then $(i e_1, i e_2,i e_3,i e_4, -e_5, -e_6,  e_7)$ is an adapted basis for $(\varphi_{\bC},vol_{\bC})$. So $\G_2$ and $\G_2^*$-structures lie in the same orbit under the natural action of the complex general linear group.
\end{itemize}
\end{remark}
\begin{lemma}\label{le:metric}
Let $V$ be a seven-dimensional real vector space and $W:=V_{\bC}$ its complexification.
\begin{enumerate}
\item[(a)]
A $\G_2^{(*)}$-structure $\varphi\in \L^3 V^*$ on $V$ induces uniquely a
(pseudo)-Euclidean metric $g$ and a metric volume form $\vol$ on $V$ by
\begin{equation*}
g(v,w) \vol:=\frac{1}{6} (v\hook \varphi) \wedge (w\hook \varphi) \wedge \varphi
\end{equation*}
such that each adapted basis for $\varphi$ is an oriented orthonormal basis.
$g$ is positive definite if $\varphi$ is a $\G_2$-structure and of
signature $(3,4)$ if $\varphi$ is a $\G_2^*$-structure. If $\varphi$
is a $\G_2^*$-structure and $(f_1,\ldots,f_7)$ is an adapted basis, then $g(f_i,f_i)=-1$ for $i=1,2,3,4$ and
$g(f_j,f_j)=1$ for $j=5,6,7$.
\item[(b)]
A $(\G_2)_{\bC}$-structure $(\varphi_{\bC},vol_{\bC})$ induces a non-degenerated symmetric complex bilinear form $g_{\bC}$ on $W$ by
\begin{equation*}
g_{\bC}(v,w) \vol_{\bC}:=\frac{1}{6} (v\hook \varphi_{\bC}) \wedge (w\hook \varphi_{\bC}) \wedge \varphi_{\bC}
\end{equation*}
and a pseudo-Euclidean metric $g_{split}$ of split signature $(7,7)$ on $W_{\bR}$ by $g_{split}:=\Re(g_{\bC})$. If $E_1,\ldots,E_7$ is an adapted basis for $(\varphi_{\bC},vol_{\bC})$, then $g_{\bC}(E_j,E_k)=\delta_{jk}$ for $j,k\in \{1,\ldots,7\}$ and $E_1, iE_1,\ldots,E_7,iE_7$ is an orthonormal basis for $g_{split}$ such that $g_{split}(E_j,E_j)=1$ and $g_{split}(i E_j,i E_j)=-1$ for $j=1,\ldots,7$.
\item[(c)]
Let $\varphi$ be a $\G_2^{(*)}$-structure on $V$, $g$ be the induced pseudo-Euclidean metric on $V$, $vol$ be the induced volume form on $V$ and $(\varphi_{\bC},vol_{\bC})$ be the induced $(\G_2)_{\bC}$-structure on $W$ in the sense of Remark \ref{re:g2*open}. Then $vol_{\bC}$ and the induced complex bilinear form $g_{\bC}$ on $W$ are the complex-linear extensions of $vol$ and $g$, respectively. 
\end{enumerate}
\end{lemma}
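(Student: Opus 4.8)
The statement to prove is Lemma \ref{le:metric}, which asserts three things: (a) a $\G_2^{(*)}$-structure induces a (pseudo-)Euclidean metric via the formula $g(v,w)\vol = \frac16(v\hook\varphi)\wedge(w\hook\varphi)\wedge\varphi$ with the stated signature; (b) the analogous complex statement; (c) compatibility under complexification.

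\textbf{Plan of proof.} The whole lemma is a normal-form computation: every $\G_2^{(*)}$-structure lies in a single $\GL(V)$-orbit, so it suffices to verify all assertions for the standard model three-form in an adapted basis, and then check that the construction is $\GL(V)$-equivariant in the appropriate sense so that the conclusions transfer to every $\varphi$ in the orbit.

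First I would treat part (a). Fix an adapted basis $(e_1,\ldots,e_7)$ and the standard form $\varphi = e^{127}+e^{347}+e^{567}+e^{135}-e^{146}-e^{236}-e^{245}$. The bilinear form $B_\varphi\colon (v,w)\mapsto \frac16(v\hook\varphi)\wedge(w\hook\varphi)\wedge\varphi \in \L^7 V^*$ is visibly symmetric (wedging top-degree forms commutes in the right parities) and $\GL(V)$-equivariant in the sense that under $A\in\GL(V)$ it scales by $\det(A)$ times the quadratic substitution; more precisely $B_{A^*\varphi}(v,w) = B_\varphi(Av,Aw)$ pulled back appropriately, so the pair (metric, volume) it determines is canonical. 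Concretely one computes $e_i\hook\varphi$ for each $i=1,\ldots,7$ — each is a sum of three decomposable two-forms — and then forms the wedge $(e_i\hook\varphi)\wedge(e_j\hook\varphi)\wedge\varphi$. For $i\neq j$ one checks the result is $0$ (no term can supply the missing basis covectors without repetition), and for $i=j$ one gets $\pm 6\, e^{1234567}$. Collecting the signs: for the $\G_2$-model all diagonal entries come out $+6$, so $B_\varphi = (\sum_i (e^i)^2)\otimes e^{1234567}$, forcing $g = \sum (e^i)^2$ positive definite and $\vol = e^{1234567}$, and the adapted basis is oriented orthonormal. For the $\G_2^*$-model $\tilde\varphi = -f^{127}-f^{347}+f^{567}+f^{135}-f^{146}-f^{236}-f^{245}$, the same computation yields $g(f_i,f_i) = -1$ for $i=1,2,3,4$ and $+1$ for $i=5,6,7$, hence signature $(3,4)$; the sign pattern is forced by which monomials carry a minus sign. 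Since $B_\varphi$ determines $g$ and $\vol$ only up to the ambiguity $g\mapsto t^2 g$, $\vol\mapsto t^{-7}\vol$ — wait, actually $B_\varphi$ is a single tensor in $S^2V^*\otimes\L^7V^*$, and demanding that the adapted basis be orthonormal pins down both factors uniquely; uniqueness is then immediate because any two adapted bases differ by an element of $\G_2$ or $\G_2^*$, which preserves $g$ and $\vol$.

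Part (b) is the verbatim complex analogue: run the identical monomial bookkeeping over $\bC$ with the standard $(\G_2)_{\bC}$-model $\varphi_{\bC} = E^{127}+\cdots-E^{245}$ and $\vol_{\bC}=E^{1234567}$, obtaining $g_{\bC}(E_j,E_k)=\delta_{jk}$; then $g_{split}:=\Re(g_{\bC})$ on $W_\bR$ has, in the real basis $E_1,iE_1,\ldots,E_7,iE_7$, the values $g_{\bC}(E_j,E_j)=1$ so $\Re$ gives $+1$, and $g_{\bC}(iE_j,iE_j)=i^2 g_{\bC}(E_j,E_j)=-1$, plus mixed terms vanish, giving split signature $(7,7)$. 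Part (c) follows by comparing definitions: if $(e_1,\ldots,e_7)$ is adapted for $\varphi$ then by the third bullet of Remark \ref{re:g2*open} the induced $(\G_2)_\bC$-structure has $\vol_\bC = e^{1234567}$, which is exactly the complex-linear extension of $\vol$; and since both $g$ and $g_\bC$ are computed by the same wedge formula applied to $\varphi$ and its complex-linear extension $\varphi_\bC$, and a complex-linear extension of a real multilinear operation commutes with the operation, $g_\bC$ is the complex-linear extension of $g$. One should double-check the $\G_2^*$ case of (c): there the adapted complex basis for $(\varphi_\bC,\vol_\bC)$ is $(ie_1,ie_2,ie_3,ie_4,-e_5,-e_6,e_7)$ rather than $(e_1,\ldots,e_7)$, so one must confirm the two prescriptions for $\vol_\bC$ agree up to the cube-root ambiguity noted in the Remark — indeed $\det$ of that basis change is $i^4\cdot(-1)^2 = 1$, so $\vol_\bC$ is unchanged and equals the complexification of $\vol$.

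\textbf{Main obstacle.} There is no conceptual difficulty; the only real work is the bookkeeping of the $7\times 7$ symmetric array of top-forms $(e_i\hook\varphi)\wedge(e_j\hook\varphi)\wedge\varphi$ and getting every sign right, especially reconciling the $\G_2^*$ sign conventions in (a) with the basis-twist in Remark \ref{re:g2*open} when proving (c). I would organize the computation once for the common ``$\G_2$-type'' monomial $e^{127}+e^{347}+e^{567}+e^{135}-e^{146}-e^{236}-e^{245}$ and then track how flipping the signs of the $e^{127}$ and $e^{347}$ terms (the only difference between the $\G_2$ and $\G_2^*$ models) propagates through, rather than redoing everything; that keeps the sign analysis honest and short.
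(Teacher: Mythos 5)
Your proposal is correct; note that the paper itself gives no proof of this lemma but defers to \cite{Br} and \cite{CLSS}, whose argument is exactly your normal-form computation in an adapted basis combined with $\GL(V)$-equivariance, so you are following essentially the same route. The only point worth tightening is your uniqueness discussion: the tensor $B_\varphi\in S^2V^*\otimes\L^7V^*$ determines the pair $(g,\vol)$ only up to $(tg,t^{-1}\vol)$, and this ambiguity is removed precisely by the normalization you state (adapted bases are oriented orthonormal, equivalently $\vol$ is the metric volume form of $g$), after which invariance under the stabilizer $\G_2$ resp.\ $\G_2^*$ gives well-definedness independent of the chosen adapted basis.
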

\begin{remark}
$\G_2$-structures, $\G_2^*$-structures and $(\G_2)_{\bC}$-structures may be understood through the composition algebra
of the octonions $(\mathbb{O},\langle \cdot,\cdot \rangle)$, of the split-octonions $(\mathbb{O}_s,\langle \cdot,\cdot \rangle_{s})$ or of the complex octonions $(\mathbb{O}_{\bC},\langle \cdot,\cdot \rangle_{\bC})$, respectively.

Therefore, consider the seven-dimensional orthogonal complements $\Im\, \bO$, $\Im\, \bO_{s}$, $\Im\, \bO_{\bC}$
of $\spa{1}\subseteq \bO, \bO_s,\bO_{\bC}$. Then the three-form $\varphi\in \L^3 \Im\, \bO^*$, $\varphi(u,v,w):=\langle u\cdot v,w \rangle$
is a $\G_2$-structure on $\Im\, \bO$, the three-form $\varphi_s\in \L^3 \Im\, \bO_s^*$, $\varphi_s(u,v,w):=\langle u\cdot v,w \rangle_s$ is a $\G_2^*$-structure on $\Im\, \bO_s$ and the three-form $\varphi_{\bC}\in \L^3 \Im\, \bO_{\bC}^*$, $\varphi_{\bC}(u,v,w):=\langle u \cdot v,w \rangle_{\bC}$ together with the volume form $vol_{\bC}:= (v\hook \varphi)\wedge (v\hook \varphi)\wedge \varphi$  is a $(\G_2)_{\bC}$-structure on $\Im\, \bO_{\bC}$, where $v\in \Im\, \bO_{\bC}$ is an arbitrary imaginary complex octonion with $\langle v,v\rangle_{\bC}=1$. Moreover, the (pseudo)-Euclidean metric $g$ on $\Im\, \bO$ or on $\Im\, \bO_s$ induced by $\varphi$ or $\varphi_s$, respectively, is $\langle \cdot,\cdot \rangle$ or $\langle \cdot,\cdot \rangle_s$, respectively. Similarly, the non-degenerated symmetric complex bilinear form $g_{\bC}$ on $\Im\, \bO_{\bC}$ induced by $(\varphi_{\bC},vol_{\bC})$ is exactly $\langle \cdot,\cdot \rangle_{\bC}$. 

To get an adapted basis, consider for a $\G_2^{(*)}$-structure the quaternions $\mathbb{H}$ naturally embedded as a positive definite subspace into $\bO$ and $\bO_s$ and for a $(\G_2)_{\bC}$-structure the complex quaternions $\mathbb{H}_{\bC}$ naturally embedded as a non-degenerated subspace into $\bO_{\bC}$. Choose the standard basis $1,i,j,k$ of $\bH$ or $\bH_{\bC}$ and a normed element $\epsilon\in \bH^{\perp}$ or $\epsilon\in \bH_{\bC}^{\perp}$, respectively. Then, in the case of a $\G_2$ and a $(\G_2)_{\bC}$-structure, $(\epsilon,j\epsilon,i\epsilon,k\epsilon,i,-k,j)$ is an adapted basis while in the case of a $\G_2^*$-structure $(\epsilon,j\epsilon,i\epsilon,k\epsilon,-i,k,j)$ is an adapted basis. Note that other authors \cite{R} call $(i,j,k,\epsilon,i \epsilon, j \epsilon, k \epsilon)$ an adapted basis. If we denote in all three cases this basis by $(F_1,\ldots,F_7)$, then $\varphi$, $\varphi_s$ and $(\varphi_{\bC},vol_{\bC})$ are given by
\begin{equation*}
\begin{split}
\varphi &=F^{123}+F^{145}-F^{167}+F^{246}+F^{257}+F^{347}-F^{356},\quad \varphi_s= F^{123}-F^{145}+F^{167}-F^{246}-F^{257}-F^{347}+F^{356}\\
\varphi_{\bC}&=F^{123}+F^{145}-F^{167}+F^{246}+F^{257}+F^{347}-F^{356},\, vol_{\bC}=F^{1234567}.
\end{split}
\end{equation*}
These are exactly the expressions used in \cite{Br} to define $\G_2^{(*)}$- and $(\G_2)_{\bC}$-structures.
\end{remark}
\begin{definition}\label{def:Hodgedual}
Lemma \ref{le:metric} (a) allows, for a given $\G_2^{(*)}$-structure $\varphi$ on $V$, to define a Hodge star operator
$\star_{\varphi}: \L^* V^*\rightarrow \L^* V^*$ by the usual requirement that for a $k$-form $\psi\in \L^k V^*$ the $(n-k)$-form $\star_{\varphi} \psi\in \L^{n-k} V^*$ is the unique $(n-k)$-form on $V$ such that for all $(n-k)$-forms $\phi\in \L^{n-k} V^*$ we have
\begin{equation*}
\psi\wedge \phi=g(\star_{\varphi} \psi , \phi) \vol.
\end{equation*}
Similarly, Lemma \ref{le:metric} (b) allows us to define for a given $(\G_2)_{\bC}$-structure $(\varphi_{\bC},vol_{\bC})$ on $W$ a Hodge star operator $\star_{\varphi_{\bC}}:\L^* W^* \rightarrow \L^* W^*$ by the requirement that for a complex $k$-form $\psi\in \L^k W^*$ the complex $(n-k)$-form $\star_{\varphi_{\bC}} \psi\in \L^{n-k} W^*$ is the unique $(n-k)$-form such that for all $(n-k)$-forms $\phi\in \L^{n-k} W^*$ we have
\begin{equation*}
\psi\wedge \phi=g_{\bC}(\star_{\varphi_{\bC}} \psi , \phi) \vol_{\bC}.
\end{equation*}
\end{definition}
\begin{remark}\label{re:Hodgedual}
\begin{itemize}
\item
If $(e_1,\ldots,e_7)$ is an adapted basis for a $\G_2^{(*)}$-structure $\varphi$ on $V$, then the Hodge
dual $\star_{\varphi} \varphi$ is given by
\begin{equation}\label{eq:Hodgedual}
\star_{\varphi} \varphi= \epsilon( e^{1256}+e^{3456})+e^{1234}- e^{2467}+e^{2357}+e^{1457}+e^{1367}
\end{equation}
where $\epsilon=1$ if $\varphi$ is a $\G_2$-structure and
$\epsilon=-1$ if $\varphi$ is a $\G_2^*$-structure. Note that the
set of all Hodge duals of a $\G_2$-structure and also the set of
all Hodge duals of a $\G_2^*$-structure form again an open orbit
under the natural action of $\GL(V)$ on $\L^4 V^*$.
\item
If $\Psi\in \L^4 V^*$ is a four-form on the real seven-dimensional vector space $V$ such that there exists a basis $(e_1,\ldots,e_7)$ with 
$\Psi=\epsilon( e^{1256}+e^{3456})+e^{1234}- e^{2467}+e^{2357}+e^{1457}+e^{1367}$, then the stabilizer of $\Psi$ in $\GL^+(V)$ is $\G_2$ for $\epsilon=1$ and $\G_2^*$ for $\epsilon=-1$. Hence such a $\Psi$ together with a volume form $vol$ induces uniquely a $\G_2^{(*)}$-structure $\varphi\in \L^3 V^*$ such that $\star_{\varphi} \varphi=\Psi$. Then $\star_{\varphi} \Psi=\varphi$ and the volume form induced by $\varphi$ is exactly $vol$. Thus, an equivalent description of a $\G_2^{(*)}$-structure can be given as a four-form as above together with an orientation. Although this equivalent description is more appropriate in our case we stick to the standard notation in the literature and only call the three-form $\varphi$ a $\G_2$-structure.
\item
If $(E_1,\ldots,E_7)$ is an adapted basis for a $(\G_2)_{\bC}$-structure $(\varphi_{\bC},vol_{\bC})$ on $W:=V_{\bC}$, then the Hodge
dual $\star_{\varphi_{\bC}} \varphi_{\bC}$ is given by
\begin{equation}\label{eq:Hodgedualcomplex}
\star_{\varphi_{\bC}} \varphi_{\bC}= E^{1234}+E^{1256}+E^{3456}- E^{2467}+E^{2357}+E^{1457}+E^{1367}
\end{equation}
Note that $\star_{\varphi_{\bC}}$ depends also on $\vol_{\bC}$ although we suppressed this dependence in the notation. Note further that if $(\varphi_{\bC},vol_{\bC})$ is induced by a $\G_2^{(*)}$-structure $\varphi$, then $\star_{\varphi_{\bC}}$ is the complex-linear extension of $\star_{\varphi}$. The set of all Hodge duals of a $(\G_2)_{\bC}$-structure forms again an open orbit
under the natural action of $\GL(W)$ on $\L^4 W^*$.
\item
If $\Psi\in \L^4 W^*$ is a four-form on the complex seven-dimensional vector space $W:=V_{\bC}$ such that there exists a basis $(E_1,\ldots,E_7)$ with $\Psi=E^{1234}+E^{1256}+E^{3456}- E^{2467}+E^{2357}+E^{1457}+E^{1367}$, then the stabilizer of $\Psi$ in $\SL(W)$ is exactly $(\G_2)_{\bC}$. Hence 
such a $\Psi$ together with a compatible volume form $vol_{\bC}$ induces uniquely a $(\G_2)_{\bC}$-structure $(\varphi_{\bC},\vol_{\bC})$ such that $\star_{\varphi_{\bC}} \varphi_{\bC}= \Psi$. Then also $\star_{\varphi_{\bC}} \Psi =\varphi_{\bC}$ and as before we see that we may equivalently describe a $(\G_2)_{\bC}$-structure as a pair $(\Psi,vol_{\bC})$ such that there is a basis for which the two forms are in the above standard forms. However, again we stick to the standard notation in the literature and only call the pair $(\varphi_{\bC},vol_{\bC})$ a $(\G_2)_{\bC}$-structure.
\item
Let $(\varphi_{\bC},vol_{\bC})$ be a $(\G_2)_{\bC}$-structure on the complex vector space $W$ and consider the real vector space $W_{\bR}$ with complex structure $i$. Then $\varphi_{\bC}$ is a $(3,0)$-form, $\vol_{\bC}$ a $(7,0)$-form and $g_{split}$ is anti-hermitian with respect to $i$.
\end{itemize}
\end{remark}
\subsection{Cocalibrated $\G_2^{(*)}$-structures on manifolds and Lie algebras}\label{G2mfd}

\emph{$\G_2$-structures} on seven-dimensional
manifolds $M$, i.e. reductions of the frame bundle of $M$ to $\G_2$,
are in one-to-one correspondence to three-forms $\varphi \in \Omega^3 M$
such that $\varphi_p$ is a $\G_2$-structure on $T_p M$ for all $p\in M$ due to the
fact that such a $\varphi$ is pointwise stabilized by $\G_2$.

Similarly, \emph{$\G_2^*$-structures} on seven-dimensional manifolds
$M$ are in one-to-one correspondence to three-forms $\varphi \in \Omega^3 M$
such that $\varphi_p$ is a $\G_2^*$-structure on $T_p M$ for all $p\in M$.

This motivates to call these kinds of three-forms $\varphi\in \Omega^3 M$ from now on$\G_2$- or $\G_2^*$-structures, respectively. 

A $\G_2^{(*)}$-structure $\varphi\in \Omega^3 M$ on a seven-dimensional manifold $M$
induces a Hodge star operator $\star_{\varphi}:\Omega^* M\rightarrow \Omega^* M$. $\varphi\in \Omega^3 M$ is
called \emph{cocalibrated} if the Hodge dual $\star_{\varphi} \varphi$ is
closed.

A \emph{$(\G_2)_{\bC}$-structures} on a $14$-dimensional (real) manifold $M$ is a
reduction of the frame bundle to $(\G_2)_{\bC}\subseteq \GL(7,\bC)\subseteq \GL(14,\bR)$. Note that then
$M$ admits an almost complex structure $J$ coming from the reduction to the subgroup $\GL(7,\bC)$. Hence
$(\G_2)_{\bC}$-structures are in one-to-one correspondence to a triplet
$(J,\varphi_{\bC},\vol_{\bC})\in \Gamma(\End(TM))\times \Omega^3(M,\bC)\times \Omega^7 (M,\bC)$ consisting of an almost complex structure $J$ on $M$, a complex-valued three-form $\varphi_{\bC}$ and a complex volume form $\vol_{\bC}$ such that for all $p\in M$ the pair $((\varphi_{\bC})_p,(\vol_{\bC})_p)\in \L^3 T_p^* M \otimes \bC\times \L^7 T_p^* M\otimes \bC$ is a $(\G_2)_{\bC}$-structure on the complex seven-dimensional space $(T_p M, J_p)$.

In particular, $\varphi_{\bC}\in \Omega^{(3,0)} M$ and $\vol_{\bC}\in \Omega^{(7,0)} M$. Moreover, $(\varphi_{\bC},\vol_{\bC})$ induces a non-degenerated complex bilinear (with respect to $J$) symmetric two-tensor $g_{\bC}\in \Gamma( S^2 T^* M \otimes \bC)$ pointwise as explained above. Note that then $g_{split}:=\Re(g_{\bC})$ is a pseudo-Riemannian metric of split signature $(7,7)$ on $M$ and that $(g_{split},J)$ is an almost anti-hermitian structure on $M$. Moreover, $g_{\bC}$ defines a Hodge star operator $\star_{\varphi_{\bC}}: \Omega^{(k,0)} M\rightarrow \Omega^{(7-k,0)} M$ pointwise as explained above. We say that $(\varphi_{\bC},vol_{\bC})$ is a \emph{cocalibrated $(\G_2)_{\bC}$-structure} if $d \star_{\varphi_{\bC}} \varphi_{\bC}=0$.

A \emph{cocalibrated $\G_2^{(*)}$-structure on a (real) Lie algebra $\g$}
is a $\G_2^{(*)}$-structure on a seven-dimensional real Lie algebra $\g$ with $d \star_{\varphi} \varphi=0$, where one defines an exterior
derivative $d$ on $\L^* \g^*$ in the usual way by identifying $k$-forms
on $\g$ with left-invariant $k$-forms on $G$. That means on one-forms $\alpha\in \L^1 \g^*$ we have $(d\alpha)(X,Y)=-\alpha([X,Y])$ for all one-forms $\alpha\in \L^1 \g^*$ and all $X,Y\in \g$. So cocalibrated, left-invariant
$\G_2^{(*)}$-structures on a Lie group $\G$ are in one-to-one correspondence to cocalibrated
$\G_2^{(*)}$-structures on the associated Lie algebra $\g$ and $d\star_{\varphi} \varphi=0$
is an algebraic equation. Note that, due to Remark \ref{re:Hodgedual}, a Lie algebra admits a cocalibrated $\G_2^{(*)}$-structure if and only if it admits a closed four-form $\Psi$ of the same form as in equation (\ref{eq:Hodgedual}).

Similarly, a \emph{cocalibrated $(\G_2)_{\bC}$-structure on a (complex) Lie algebra $\g$ } is defined as a $(\G_2)_{\bC}$-structure $(\varphi_{\bC},vol_{\bC})$  on the seven-dimensional complex Lie algebra $\g$ with $d \star_{\varphi_{\bC}} \varphi_{\bC}=0$, where the differential $d$ is defined complete analogously. Again we have a one-to-one correspondence between cocalibrated $(\G_2)_{\bC}$-structure on $\g$ and left-invariant cocalibrated $(\G_2)_{\bC}$-structures on a corresponding complex Lie group $G$. Note that the left-invariant almost complex structure $J$ is induced by multiplication with $i$ on $\g$ is integrable. Note further that, again due to Remark \ref{re:Hodgedual}, a Lie algebra admits a cocalibrated $(\G_2)_{\bC}$-structure if and only if it admits a closed four-form $\Psi$ of the same form as in equation (\ref{eq:Hodgedualcomplex}).

\subsection{Lie algebras with codimension one Abelian ideals}\label{classLA}
In this subsection we consider, for $\bF\in \{\bR,\bC\}$, $n$-dimensional $\bF$-Lie algebras with codimension one Abelian ideals. We show how the exterior differential on $k$-forms can be described in an easy way and give a description of all closed $k$-forms on these Lie algebras. Finally we show how one can classify all such Lie algebras.

\begin{lemma}\label{le:derivative}
Let $\g$ be an $n$-dimensional $\bF$-Lie algebra, $\bF\in \{\bR,\bC\}$, with a codimension one Abelian ideal $\uf$. Choose $e_n\in \g\backslash \uf$ and let $e^n\in \Ann{\uf}$ be such that $e^n(e_n)=1$. Then we can canonically identify $\L^k\uf^*$ with $\L^k \Ann{\spa{e_n}}$ as vector spaces using the decomposition $\g=\uf\oplus \spa{e_n}$. Set $f:=\ad(e_7)|_{\uf} \in \mathfrak{gl}(\uf)$. Using the above canonical identification the following statements are true: 
\begin{enumerate}
\item[(i)]
$d\alpha=e^n\wedge(-\alpha\circ f)$ for all $\alpha\in \L^1\uf^*$ and $de^n=0$.
\item[(ii)]
$d\rho=e^n\wedge (f.\rho)$ and $d(e^n\wedge \rho)=0$ for all $\rho\in \L^k \uf^*$. Thereby, the Lie algebra $\mathfrak{gl}(\uf)$ acts in the natural way on $\L^k \uf^*$. 
\item[(iii)]
A $k$-form $\rho\in \L^k \uf^*$ is closed if and only if $f\in L(\GL(\uf)_{\rho})$. Thereby, $\GL(\uf)_{\rho}\subseteq \GL(\uf)$ is the stabilizer group of the $k$-form $\rho$ under the natural action of $\GL(\uf)$ on $\L^k \uf^*$ and $L(\GL(\uf)_{\rho})$ is the associated Lie algebra.
\end{enumerate}
\end{lemma}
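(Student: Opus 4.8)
The plan is to argue directly from the Chevalley--Eilenberg description of $d$, using two structural facts that follow immediately from the hypotheses: since $\g/\uf$ is one-dimensional it is Abelian, so $[\g,\g]\subseteq\uf$; and since $\uf$ itself is Abelian, $[X,Y]=0$ for all $X,Y\in\uf$. Throughout I use the identification of $\L^k\uf^*$ with $\L^k\Ann{\spa{e_n}}$ from the statement, so that the forms in question are precisely the ones on $\g$ vanishing whenever one slot is filled by $e_n$; note in particular that for such a form $\sigma$ the equation $e^n\wedge\sigma=0$ forces $\sigma=0$, because wedging with $e^n$ is injective on this subspace.

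For (i), let $\alpha\in\L^1\uf^*$. Then $(d\alpha)(X,Y)=-\alpha([X,Y])=0$ for $X,Y\in\uf$, while $(d\alpha)(e_n,X)=-\alpha([e_n,X])=-(\alpha\circ f)(X)$ for $X\in\uf$. Since a two-form on $\g$ is determined by its restriction to $\uf\times\uf$ together with its values on the pairs $(e_n,X)$, comparison with $e^n\wedge\beta$, whose value on $(e_n,X)$ is $\beta(X)$, forces $\beta=-\alpha\circ f$, i.e.\ $d\alpha=e^n\wedge(-\alpha\circ f)$. That $de^n=0$ is immediate, since $(de^n)(X,Y)=-e^n([X,Y])=0$ because $[X,Y]\in[\g,\g]\subseteq\uf=\ker e^n$.

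For (ii), write an arbitrary $\rho\in\L^k\uf^*$ as a sum of decomposable forms $\alpha_1\wedge\cdots\wedge\alpha_k$ with $\alpha_j\in\uf^*$ and use that $d$ is an antiderivation together with (i). Each $d\alpha_j=-e^n\wedge(\alpha_j\circ f)$; moving this $e^n$ to the front past the first $j-1$ one-forms contributes a sign $(-1)^{j-1}$, which cancels the $(-1)^{j-1}$ coming from the antiderivation, so that the only surviving sign is the single minus sign from (i) and one obtains
\[
d\rho=e^n\wedge\Bigl(-\sum_{j=1}^{k}\alpha_1\wedge\cdots\wedge(\alpha_j\circ f)\wedge\cdots\wedge\alpha_k\Bigr).
\]
Since the natural action of $\mathfrak{gl}(\uf)$ on one-forms is $A.\alpha=-\alpha\circ A$ and extends to $\L^k\uf^*$ as a derivation of $\wedge$, the bracketed form is exactly $f.\rho$, giving $d\rho=e^n\wedge(f.\rho)$. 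Then $d(e^n\wedge\rho)=(de^n)\wedge\rho-e^n\wedge d\rho=-e^n\wedge e^n\wedge(f.\rho)=0$. (Alternatively one can check (ii) directly from the Chevalley--Eilenberg formula, noticing that every term with two of its arguments in $\uf$ vanishes; the antiderivation route just avoids the index computation.)

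For (iii), by (ii) the form $\rho$ is closed if and only if $e^n\wedge(f.\rho)=0$, which by the injectivity noted above is equivalent to $f.\rho=0$. Finally, $\{A\in\mathfrak{gl}(\uf):A.\rho=0\}$ coincides with $L(\GL(\uf)_\rho)$: this is the standard fact that the Lie algebra of the stabilizer of a tensor under a representation $\pi$ is the kernel of the infinitesimal action on that tensor, since $\pi(\exp(tA))\rho=\exp(t\,d\pi(A))\rho$ is constant in $t$ precisely when $d\pi(A)\rho=0$. The only real obstacle in all of this is the sign bookkeeping in (ii) --- arranging the placement of $e^n$ and the convention for the $\mathfrak{gl}(\uf)$-action to line up so that the identity reads $d\rho=e^n\wedge(f.\rho)$ and not its negative; there is no conceptual difficulty.
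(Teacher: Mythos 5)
Your proof is correct and follows essentially the same route as the paper: compute $d$ on $e^n$ and on one-forms from the Chevalley--Eilenberg formula, extend to $k$-forms by the antiderivation property (your sign bookkeeping for $d\rho=e^n\wedge(f.\rho)$ checks out against the convention $A.\alpha=-\alpha\circ A$), and identify $\{A:A.\rho=0\}$ with $L(\GL(\uf)_\rho)$ via the exponential. You are in fact slightly more careful than the paper in making explicit the injectivity of $e^n\wedge\cdot$ on $\L^k\Ann{\spa{e_n}}$, which the paper uses implicitly in step (iii).
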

\begin{proof}
Let $X,Y\in \g$. Then $de^n(X,Y)=-e^n([X,Y])=0$ since $[X,Y]\in \uf$ and $e^n\in \Ann{\uf}$. This shows $de^n=0$. Next, let $\alpha\in \L^1\Ann{\spa{e_n}}\cong \L^1 \uf^*$. If $X,Y$ are both in $\uf$ we have $[X,Y]=0$ and so
\begin{equation*}
d\alpha(X,Y)=-\alpha([X,Y])=0=(e^n\wedge(-\alpha\circ f))(X,Y).
\end{equation*}
Furthermore, if $Y\in \uf$, then
\begin{equation*}
d\alpha(e_n,Y)=-\alpha([e_n,Y])=-(\alpha\circ f)(Y)=-e^n(e_n)(\alpha \circ f)(Y) =(e^n\wedge(-\alpha\circ f))(e_n,Y).
\end{equation*}
This shows $d\alpha=e^n\wedge (-\alpha\circ f)$ and so (i).

But then $d\rho=e^n\wedge f.\rho$ for all $\rho\in \L^k \uf^*$ follows immediately. Moreover, this also shows 
\begin{equation*}
d(e^n\wedge\rho)=-e^n\wedge d\rho=-e^n\wedge e^n\wedge f.\rho=0
\end{equation*}
for all $\rho\in \L^k \uf^*$ and (ii) follows. 

If $\rho\in \L^k \Ann{e_n}\cong \L^k \uf^*$ is closed (with respect to the differential on $\g$), then $f.\rho=0$. Thus $\exp(tf).\rho=\rho$ and so $\exp(tf)\in \GL(\uf)_{\rho}$ for all $t\in \bR$. Hence $f\in L(\GL(\uf)_{\rho})$. Conversely, if $f\in L(\GL(\uf)_{\rho})$, then by definition $\exp(tf).\rho=\rho$ for all $t\in \bR$. Differentiating at zero gives $f.\rho=0$ and so $d\rho=0$. This finishes the proof of (iii).
\end{proof}
To get a classification of all Lie algebras with a codimension one Abelian ideal we note that the entire structure is encoded in the action of an element $e_n\in \g\backslash \uf$ on the subspace $\uf$ and the action of any other element $e_n'\in\g\backslash \uf$
on $\uf$ is a non-zero multiple of the action of $e_n$. More generally, we have:

\begin{proposition}\label{pro:classLA}
Let $\mathfrak{g}=\bF^{n-1}\rtimes_{\varphi} \bF e_n$ and
$\mathfrak{g}'=\bF^{n-1}\rtimes_{\varphi'} \bF e_n'$ be two
$n$-dimensional $\bF$-Lie algebras, $\bF\in \{\bR,\bC\}$, with Abelian ideal of
codimension $1$. Then $\mathfrak{g}\cong \mathfrak{g}'$ if and only
if there exists $\gamma\in \bF\backslash \{0\}$ such that
$\varphi(e_n)$ and $\gamma \varphi'(e_n')$ are conjugate in $\GL_{n-1}(\bF)$.
Hence $\mathfrak{g}$ is isomorphic to $\mathfrak{g}'$ if and only if there exists $\gamma\in \bF\backslash \{0\}$
such that in the complex Jordan normal forms for $\varphi(e_n)$ and $\gamma \varphi'(e_n')$ for each Jordan block for $\varphi(e_n)$ of size $m$ with $\lambda$ on the diagonal there exists a Jordan block for $\varphi'(e_n')$ of size $m$ with $\gamma \lambda$ on the diagonal.
\end{proposition}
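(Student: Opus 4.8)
The plan is to prove the two implications of the equivalence directly; the only subtle point is that an abstract Lie algebra isomorphism $\mathfrak{g}\to\mathfrak{g}'$ need not carry the distinguished codimension one Abelian ideal of $\mathfrak{g}$ onto that of $\mathfrak{g}'$.

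For the implication ``$\Leftarrow$'', suppose there are $S\in\GL_{n-1}(\bF)$ and $\gamma\in\bF\setminus\{0\}$ with $S\,\varphi(e_n)\,S^{-1}=\gamma\,\varphi'(e_n')$. I would define a linear map $\Phi\colon\mathfrak{g}\to\mathfrak{g}'$ by $\Phi|_{\bF^{n-1}}:=S$ and $\Phi(e_n):=\gamma\,e_n'$; this is visibly a linear isomorphism. Since in $\mathfrak{g}$ the only non-trivial brackets are $[e_n,v]=\varphi(e_n)v$ for $v\in\bF^{n-1}$, the single identity that needs to be checked is $\Phi([e_n,v])=[\Phi(e_n),\Phi(v)]$, which unwinds to $S\,\varphi(e_n)=\gamma\,\varphi'(e_n')\,S$ and thus holds by hypothesis. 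Hence $\Phi$ is a Lie algebra isomorphism and $\mathfrak{g}\cong\mathfrak{g}'$.

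For ``$\Rightarrow$'', let $\Phi\colon\mathfrak{g}\to\mathfrak{g}'$ be an isomorphism and write $\uf:=\bF^{n-1}\subseteq\mathfrak{g}$ and $\uf':=\bF^{n-1}\subseteq\mathfrak{g}'$ for the distinguished ideals. Then $\Phi(\uf)$ is again a codimension one Abelian ideal of $\mathfrak{g}'$, and $\Phi(e_n)\notin\Phi(\uf)$, so restricting $\Phi([e_n,v])=[\Phi(e_n),\Phi(v)]$ to $v\in\uf$ shows that $\varphi(e_n)=\ad(e_n)|_{\uf}$ is conjugate in $\GL_{n-1}(\bF)$ to $\ad_{\mathfrak{g}'}(\Phi(e_n))|_{\Phi(\uf)}$. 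If $\Phi(\uf)=\uf'$, then writing $\Phi(e_n)=\gamma\,e_n'+w$ with $\gamma\in\bF\setminus\{0\}$ and $w\in\uf'$ and using that $\uf'$ is Abelian gives $\ad_{\mathfrak{g}'}(\Phi(e_n))|_{\uf'}=\gamma\,\varphi'(e_n')$, and we are done. Otherwise $\mathfrak{g}'$ carries two \emph{distinct} codimension one Abelian ideals $\uf_1:=\Phi(\uf)$ and $\uf_2:=\uf'$, and for this case I would isolate the following sub-lemma: if a Lie algebra $\mathfrak{h}$ has distinct codimension one Abelian ideals $\uf_1\neq\uf_2$, then, putting $\mathfrak{a}:=\uf_1\cap\uf_2$ (which has codimension two, since $\uf_1+\uf_2=\mathfrak{h}$), for every $e\notin\uf_i$ the operator $\ad(e)|_{\uf_i}$ maps $\uf_i$ into $\mathfrak{a}$ and annihilates $\mathfrak{a}$, hence squares to zero, and its rank equals $\dim{[\mathfrak{h},\mathfrak{h}]}$ (hence lies in $\{0,1\}$ and is the same for $i=1$ and $i=2$). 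Since a square-zero endomorphism of $\bF^{n-1}$ is determined up to conjugacy by its rank, $\ad(e)|_{\uf_1}$ and $\ad(e')|_{\uf_2}$ are then conjugate up to a non-zero scalar for all admissible $e,e'$. Applying the sub-lemma to $\mathfrak{h}=\mathfrak{g}'$ with $e=\Phi(e_n)$ and $e'=e_n'$ and chaining with the conjugacy established above produces $\gamma\in\bF\setminus\{0\}$ such that $\varphi(e_n)$ is conjugate in $\GL_{n-1}(\bF)$ to $\gamma\,\varphi'(e_n')$.

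To obtain the reformulation stated in the last sentence I would use that two matrices over $\bF\in\{\bR,\bC\}$ are conjugate in $\GL_{n-1}(\bF)$ if and only if they have the same complex Jordan normal form (for $\bF=\bR$ this invokes the standard fact that two real matrices which are conjugate over $\bC$ are already conjugate over $\bR$), together with the observation that the complex Jordan normal form of $\gamma A$ is obtained from that of $A$ by replacing each block of size $m$ with eigenvalue $\lambda$ by a block of size $m$ with eigenvalue $\gamma\lambda$; after replacing $\gamma$ by $\gamma^{-1}$ the resulting condition is exactly the one stated. The step I expect to be the real obstacle is this last case of ``$\Rightarrow$'', that is, controlling Lie algebras of this type which possess more than one codimension one Abelian ideal; the sub-lemma dispatches it, the key point being that such non-uniqueness is extremely rigid and forces $\dim{[\mathfrak{h},\mathfrak{h}]}\le 1$.
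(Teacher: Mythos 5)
Your proof is correct and follows essentially the same route as the paper: the only real issue in either argument is the possible non-uniqueness of the codimension one Abelian ideal, and your sub-lemma (the bracket of anything with $\uf_1\cap\uf_2$ vanishes because each factor can be absorbed into one of the two Abelian ideals, forcing $\ad(e)|_{\uf_i}$ to be square-zero of rank $\dim{[\mathfrak{h},\mathfrak{h}]}\le 1$) is exactly the computation the paper performs, merely packaged as a rank statement rather than as an explicit Jordan normal form.
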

\begin{proof}
"$\Rightarrow$":\\
If $\g$ and so also $\g'$ are Abelian, then the statement is trivial. Next, assume that $\g$ is not Abelian and admits more than one codimension one Abelian ideal. Le $\uf$ be an Abelian ideal in $\g$ of codimension one with $\uf\neq \bF^{n-1}$. Then $V:=\uf\cap \bF^{n-1}$ is a $(n-2)$-dimensional subspace of $\bF^{n-1}$. Since $[\g,\g]\subseteq \uf\cap \bF^{n-1}=V$, we have $\varphi(e_n)(\bF^{n-1})\subseteq V$. Moreover, $\uf\neq \bF^{n-1}$ implies the existence of $\lambda\neq 0$ and $w\in \bF^{n-1}$ such that $u:=w+\lambda e_n\in \uf$. Then, for all $v\in V$, the identities
\begin{equation*}
\varphi(e_n)(v)=[e_n,v]_{\g}=\frac{1}{\lambda} [\lambda e_n,v]_{\g} =\frac{1}{\lambda}[u-w,v]_{\g}=\frac{1}{\lambda}[u,v]_{\g}=0,
\end{equation*}
are true, where the last two identities follow from the fact that $\bF^{n-1}$ and $\uf$ are Abelian. Hence
$\varphi(e_n)|_V=0$ and the Jordan normal form of $\varphi(e_n)$ is
\begin{equation*}
\begin{pmatrix}
0 & 1 & & &\\
 &  0 &  & &\\
 & & 0  & & \\
 & & & \ddots & \\
 & & & & 0
\end{pmatrix}.
\end{equation*}
The same is of course true for $\g'$ and the statement follows for this case (note that then $\g=\h_3\op \bF^{n-3}$ with the three-dimensional Heisenberg algebra $\h_3$).

So we may assume that the unique Abelian ideal of codimension one in $\g$ and $\g'$ is $\bF^{n-1}$. Then each Lie algebra isomorphism $\Phi:\g\rightarrow \g'$ maps $\bF^{n-1}$ isomorphically onto $\bF^{n-1}$ and there has to be $\bF\ni\gamma\neq 0$ and $w\in \bF^{n-1}$ such that $\Phi(e_n)=\gamma e_n'+w$. So, for $\psi:=\Phi|_{\bF^{n-1}}\in \mathfrak{gl}(\bF^{n-1})$ and all $v\in \bF^{n-1}$ we get
\begin{equation*}
(\psi\circ \varphi(e_n))(v)=\Phi([e_n,v]_{\g})=[\Phi(e_n),\Phi(v)]_{\g'}=[\gamma e_n'+w,\psi(v)]_{\g'}=\gamma (\varphi'(e_n')\circ \psi)(v),
\end{equation*}
which implies the statement.

"$\Leftarrow$":\\
By assumption, there exists $\psi\in \GL_{n-1}(\bF)$ and $\gamma\in \bF\backslash\{0\}$ such that
\begin{equation*}
\varphi(e_n)=\psi^{-1}\circ(\gamma \varphi'(e_n'))\circ \psi.
\end{equation*}
Then a short computation shows that
\begin{equation*}
\Psi:\mathfrak{g}\rightarrow \mathfrak{g}',\quad \Psi(v+ \alpha e_n):= \psi(v)+\alpha \gamma e_n',\,\, \forall v\in \bF^{n-1}, \alpha\in \bF                                                             
\end{equation*}
is a Lie algebra isomorphism.
\end{proof}
\begin{remark}\label{re:class}
\begin{itemize}
\item
Note that for $\bF=\bR$ of course not all complex Jordan normal forms are possible for $\varphi(e_n)$. It is well-known that exactly those complex Jordan normal forms are possible where for each complex Jordan block of size $n$ with $\lambda\notin \bR$ on the diagonal we have a complex Jordan block of size $n$ with $\overline{\lambda}$ on the diagonal. Note that this implies that not all complex $n$-dimensional Lie algebras $\g$ with codimension one Abelian ideals are complexifications of real $n$-dimensional Lie algebras with codimension one Abelian ideals.
\item
Proposition \ref{pro:classLA} gives us in fact a classification of all real or complex $n$-dimensional Lie algebras with codimension one Abelian ideals. We may write down a complete list for each dimension by considering step-by-step all possible sizes of the Jordan blocks in the complex Jordan form for $\varphi(e_n)$, choosing the diagonal elements in each Jordan blocks as parameters and restricting these parameters in such a way that they are non-isomorphic for different parameter values but still give all isomorphism classes. This restriction can be carried out in detail in each case using the condition in Proposition \ref{pro:classLA} but it is still a cumbersome job and we will not do it here.
\item
Another way to formulate the essence of Proposition \ref{pro:classLA} is to say that the isomorphism classes of non-Abelian $n$-dimensional $\bF$-Lie algebras, $\bF\in \{\bR,\bC\}$, with codimension one Abelian ideals are in one-to-one correspondence to the orbits of $\PGL_{n-1}(\bF)$ on the projective space $P(\End_{\bF^{n-1}})$. This is a stratified space with the largest strata having codimension $(n-2)$.
\end{itemize}
\end{remark}

\section{Invariants for orbits of $k$-vectors}\label{invariants}

In this section, $\bF$ may be an arbitrary field of characteristic $0$ although we will only need the case $\bF\in \{\bR,\bC\}$ in the following.
Let $V$ be an $n$-dimensional $\bF$-vector space. We define certain
numbers for $k$-vectors $X\in \Lambda^k V$ which are invariant under
the natural action of $\GL(V)$ on $\L^k V$. Some of them were
introduced by Westwick in \cite{W2} to classify all orbits of
three-vectors in seven dimension. Using this classification, the classification of three-vectors up to
eight complex dimensions in \cite{W1} and so-called
\emph{dual isomorphisms} we can determine the values of these invariants for the orbit
of Hodge duals of $\G_2$-structures, $\G_2^*$-structures and $(\G_2)_{\bC}$-structures. For more background on these invariants, 
we refer the reader also to \cite{BG} and \cite{Ca}.

A $k$-vector $X\in \L^k V$ is called \emph{simple} or
\emph{decomposable} if there exist vectors $v_1,\ldots, v_k\in V$
such that $X=v_1\wedge \ldots \wedge v_k$. The cone $G_k(V)\subseteq \L^k V$ of
all decomposable $k$-vectors is called the \emph{Grassmann cone}.

The \emph{(irreducible) length} $l(X)$ of a $k$-vector $X\in \L^k V$
is the smallest non-negative integer $l\in \bN_0$ for which there
exist decomposable $k$-vectors $X_1,\ldots, X_l$ with $X=\sum_{i=1}^l X_i$.
If $U$ is another finite-dimensional $\bF$-vector space, then considering
$X$ as $k$-vector in $V\op U$ does not affect the length of $X$. Moreover, if
$u_1,\ldots u_s$ are linearly independent vectors in $U$, then
$l(X)=l(X\wedge u_1\wedge \ldots \wedge u_s)$.

The \emph{rank} $\rho(X)$ of a $k$-vector $X\in \L^k V$ is
the dimension of the \emph{support} $[X]$ of the $k$-vector $X$, which is defined by
\begin{equation*}
[X]:=\bigcap \{W \textrm{ subspace of $V$} | X\in \L^k W\}.
\end{equation*}
Equivalently, the rank is given by the dimension of the image of the
map $F:V^*\rightarrow \Lambda^{k-1} V$, $F(\alpha):=\alpha\hook X$.

Let $v\in [X]\backslash \{0\}$ and $W$ be a complement of $\spa{v}$
in $[X]$. Then there exists a unique $(k-1)$-vector
$X_1=X_1(v,W)\in \L^{k-1} W$ and a unique $k$-vector
$X_2=X_2(v,W)\in \L^k W$ such that
\begin{equation*}
X=X_1\wedge v+X_2.
\end{equation*}
Let
\begin{equation*}
D(X):=\left\{Y\in \L^{k-1} V \left.| Y=X_1(v,W) \textrm{ for } v\in [X]\backslash \{0\},\, \textrm{$W$ complement of $\spa{v}$ in $[X]$}\right.\right\} 
\end{equation*}
and 
\begin{equation*}
E(X):=\left\{Z\in \L^k V \left.| Z=X_2(v,W) \textrm{ for } v\in [X]\backslash \{0\},\, \textrm{$W$ complement of $\spa{v}$ in $[X]$}\right.\right\}.
\end{equation*}
Set
\begin{equation*}
m(X):=\min\{l(Y)|Y\in D(X)\},\quad r(X):=\min\{l(Z)|Z\in E(X)\}.
\end{equation*}
The quadruple $(\rho(X),l(X),r(X),m(X))$ is invariant under the natural action of $\GL(V)$ on $\L^k V$.

For two-vectors the length is enough to distinguish the orbits under the natural action of the general linear group $\GL(V)$ and the length of a two-vector can easily be computed:
\begin{lemma}\label{le:length2forms}
Let $V$ be an $n$-dimensional $\bF$-vector space. Then:
\begin{enumerate}
\item
$X\in \Lambda^2 V$ is of length $l$ if and only if $X^l\neq 0$ and $X^{l+1}=0$.
\item
$X\in \Lambda^2 V$ has irreducible length $l$ if and only if there are $2l$ linearly independent vectors $v_1,\ldots ,v_{2l}\in V$ such that $X=\sum_{i=1}^l  v_{2i-1}\wedge v_{2i}$.
\end{enumerate}
\end{lemma}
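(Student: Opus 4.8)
The plan is to prove both statements by the same linear-algebra normal-form argument, working over an arbitrary field $\bF$ of characteristic $0$. The key tool is the standard skew-normal form of an alternating bilinear form: if we identify $X\in\L^2 V$ with the alternating form $\hat X\in\L^2 V^*$ on the dual (or, equivalently, work with the contraction map $F\colon V^*\to V$, $F(\a)=\a\hook X$, whose image is the support $[X]$), then there is a basis $v_1,\dots,v_n$ of $V$ in which $X=\sum_{i=1}^{l}v_{2i-1}\wedge v_{2i}$, where $2l=\dim[X]=\rho(X)$. This is the content that (2) asserts, so I would establish (2) first and then read off (1) from it.

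First I would prove (2). For the ``if'' direction, if $X=\sum_{i=1}^{l}v_{2i-1}\wedge v_{2i}$ with $v_1,\dots,v_{2l}$ linearly independent, then $l(X)\le l$ trivially; for the reverse inequality one notes that $\rho(X)=2l$ (the contraction map $F$ has image exactly $\spa{v_1,\dots,v_{2l}}$, since the $v_i$ are independent), and any writing of $X$ as a sum of $l'$ decomposable two-vectors forces $[X]$ to lie in the span of the $2l'$ vectors involved, so $2l=\rho(X)\le 2l'$, i.e. $l'\ge l$. Hence $l(X)=l$. For the ``only if'' direction, I would take $X$ of length $l$, so $X=\sum_{i=1}^{l}a_i\wedge b_i$ for some $a_i,b_i\in V$; the span $U$ of all these $2l$ vectors contains $[X]$, and restricting attention to $U$ one has a finite-dimensional space carrying $X$ as a nondegenerate alternating form on its support. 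Applying the classical symplectic basis theorem to $X$ viewed on $[X]$ (equivalently, Gram–Schmidt-style reduction for alternating forms: pick $v_1\in[X]$ nonzero, find $v_2$ with the form pairing them to $1$, split off the hyperbolic plane $\spa{v_1,v_2}$ and induct on its ``orthogonal'' complement inside $[X]$), one gets a basis $v_1,\dots,v_{2k}$ of $[X]$ with $X=\sum_{i=1}^{k}v_{2i-1}\wedge v_{2i}$. Then by the ``if'' direction $l(X)=k$, so $k=l$, giving the desired expression with $2l$ linearly independent vectors. Extending $v_1,\dots,v_{2l}$ to a basis of $V$ if one wishes is harmless.

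Then (1) follows immediately: with $X=\sum_{i=1}^{l}v_{2i-1}\wedge v_{2i}$ as in (2) and $v_1,\dots,v_{2l}$ extended to a basis, the wedge powers are computed by expanding $X^p=\bigl(\sum v_{2i-1}\wedge v_{2i}\bigr)^p$; since the summands pairwise commute (they are even-degree) and each squares to zero, $X^p=p!\sum_{i_1<\dots<i_p}v_{2i_1-1}\wedge v_{2i_1}\wedge\cdots\wedge v_{2i_p-1}\wedge v_{2i_p}$. In characteristic $0$ the coefficient $p!$ is nonzero, so $X^p\neq0$ for $p\le l$ (the term with $\{i_1,\dots,i_p\}$ any $p$-subset is a nonzero wedge of distinct basis vectors) and $X^{l+1}=0$ (there is no $(l+1)$-subset of $\{1,\dots,l\}$). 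Conversely, if $X^l\neq0$ and $X^{l+1}=0$, then writing $X$ in the normal form of (2) with $l(X)=k$ shows $X^k\neq0$, $X^{k+1}=0$, whence $k=l$ and $l(X)=l$.

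The only mildly delicate point is the reduction to the symplectic basis theorem over an arbitrary characteristic-$0$ field and the bookkeeping that the normal-form vectors can be taken linearly independent and spanning exactly $[X]$; this is classical but it is where the real content sits. Everything else — the inequality $\rho(X)\le 2\,l(X)$ from the support being contained in the span of the vectors appearing in any decomposition, and the characteristic-$0$ computation of $X^p$ — is routine. I expect the main obstacle, such as it is, to be phrasing the induction on $[X]$ cleanly, since one must be a little careful that the ``complement'' produced at each step really lies inside $[X]$ and that the process terminates exactly when the restricted form is zero, i.e. after $l$ steps.
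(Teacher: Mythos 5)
Your proof is correct, but it runs in the opposite logical direction from the paper's. The paper does not prove (a) at all — it cites it as \cite[Theorem 2.11]{BG} — and then derives (b) from (a) by a very short argument: for the ``if'' direction it computes $X^l=l!\,v_1\wedge\ldots\wedge v_{2l}\neq 0$ and $X^{l+1}=0$ and invokes (a); for the ``only if'' direction it takes any minimal decomposition $X=\sum_{i=1}^l v_{2i-1}\wedge v_{2i}$ and uses (a) again to conclude $v_1\wedge\ldots\wedge v_{2l}=X^l/l!\neq 0$, which forces linear independence. You instead prove (b) from scratch — via the classical skew normal form on the support $[X]$ together with the inequality $\rho(X)\le 2\,l(X)$, which pins down the length of the normal form — and then deduce (a) from (b) by the wedge-power computation. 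What your route buys is self-containedness (no appeal to the external reference, and the independence of the vectors in a minimal decomposition comes out of the normal-form construction rather than from (a)); what it costs is having to carry out the symplectic-basis induction and the support bookkeeping, which the paper avoids entirely by outsourcing (a). Both arguments are sound, and your characteristic-zero caveat about the factor $p!$ is the same one implicitly used in the paper's computation.
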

\begin{proof}
(a) is \cite[Theorem 2.11]{BG}.

For (b) note that if $X$ is as in the statement, i.e. $X=\sum_{i=1}^l v_{2i-1}\wedge v_{2i}$ with $v_1,\ldots,v_{2l}\in V$ being linearly independent, then $X^l=l!\, v_1\wedge \ldots\wedge v_{2l}$ and $X^{l+1}=0$. Thus (a) implies that the length is $l$.

If $X$ has length $l$, then, by definition, $X=\sum_{i=1}^l Y_i$ for $Y_i\in G_2(V)$. Hence we may choose vectors $v_j\in V$,
$j=1,\ldots 2l$ such that $Y_i=v_{2i-1}\wedge v_{2i}$. By (a),
\begin{equation*}
v_{1}\wedge\ldots\wedge v_{2l}=Y_1\wedge\ldots\wedge Y_l=\frac{X^l}{l!}\neq 0.
\end{equation*}
Thus $v_1,\ldots, v_{2l}$ are linearly independent and (b) follows.
\end{proof}

In \cite{W2}, Westwick showed that the quadruple $(\rho(X),l(X),r(X),m(X))$ is sufficient to
distinguish between the different orbits of three-vectors in a
seven-dimensional real vector space:
\begin{lemma}\label{le:westwick}
Let $U$ be a seven-dimensional real vector space. If three-vectors
$X\in L^3 U$ and $Y\in \L^3 U$ lie in different orbits under the
natural action of $\GL(U)$ on $\L^3 U$, then
\begin{equation*}
(\rho(X),l(X),r(X),m(X))\neq (\rho(Y),l(Y),r(Y),m(Y)).
\end{equation*}
If $U=V^*$ is the dual space of a seven-dimensional vector space $V$
and $\varphi$ is a $\G_2$-structure on $V$, then
\begin{equation*}
(\rho(\varphi),l(\varphi),r(\varphi),m(\varphi))= (7,5,3,3),
\end{equation*}
and if $\tilde{\varphi}$ is a $\G_2^*$-structure on $V$, then
\begin{equation*}
(\rho(\tilde{\varphi}),l(\tilde{\varphi}), r(\tilde{\varphi}),m(\tilde{\varphi}))= (7,4,2,2).
\end{equation*}
\end{lemma}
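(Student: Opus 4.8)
The first assertion---that the quadruple $(\rho,l,r,m)$ separates the $\GL(U)$-orbits of three-vectors in a seven-dimensional real vector space $U$---is precisely Westwick's classification theorem in \cite{W2}, so I would simply invoke it. For the second assertion the plan is to combine three ingredients: the $\GL(V)$-invariance of $(\rho,l,r,m)$; the fact, recorded in Remark \ref{re:g2*open}, that the $\G_2$-structures form a single open $\GL(V)$-orbit, the $\G_2^*$-structures form another, and these are the only two open orbits of $\L^3 V^*$; and the classification \cite{W2} again, which attaches to every three-vector orbit its quadruple. Since the quadruple is an orbit invariant, it suffices to evaluate it on one $\G_2$-structure and one $\G_2^*$-structure and then to decide which of the two ``open'' quadruples belongs to which.

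The rank is immediate from Lemma \ref{le:metric}: if $v\hook\varphi=0$ for some $v\in V$, then $g(v,w)\,\vol=\tfrac16(v\hook\varphi)\wedge(w\hook\varphi)\wedge\varphi=0$ for every $w$, contradicting non-degeneracy of $g$; hence $v\mapsto v\hook\varphi$ is injective, the support of $\varphi$ is all of $V$, and $\rho(\varphi)=7$. The identical argument gives $\rho(\tilde\varphi)=7$.

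For $l$, $r$, $m$ I would first record upper bounds coming from explicit decompositions. Write $\varphi=e^7\wedge\big(e^{12}+e^{34}+e^{56}\big)+\psi$ with $\psi:=e^{135}-e^{146}-e^{236}-e^{245}$. Here $e^{12}+e^{34}+e^{56}$ is a symplectic form on $\spa{e^1,\ldots,e^6}$, hence of length $3$, and $\psi$ is the real part of the decomposable complex three-form $(e^1+\i e^2)\wedge(e^3+\i e^4)\wedge(e^5+\i e^6)$, hence a sum of at most four simple three-vectors. Taking $v=e^7$ with complement $\spa{e^1,\ldots,e^6}$ exhibits $e^{12}+e^{34}+e^{56}\in D(\varphi)$ and $\psi\in E(\varphi)$, so $m(\varphi)\le 3$ and $r(\varphi)\le l(\psi)\le 4$; a more careful regrouping of the monomials of $\varphi$ presents it as a sum of five simple three-vectors, so $l(\varphi)\le 5$. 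For $\tilde\varphi$ the split signature yields sharper bounds: after passing to an isotropic basis, $\tilde\varphi$ takes a standard split normal form (of the shape ``volume of a null three-plane'' plus ``volume of a complementary null three-plane'' plus one further vector wedged with a two-form), from which one reads off a decomposition into four simple three-vectors and, with a well-chosen $v$ in the support, the estimates $r(\tilde\varphi)\le 2$ and $m(\tilde\varphi)\le 2$.

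The remaining point---the matching lower bounds $l(\varphi)\ge 5$, $r(\varphi),m(\varphi)\ge 3$ and $l(\tilde\varphi)\ge 4$, $r(\tilde\varphi),m(\tilde\varphi)\ge 2$, together with the correct assignment---is the genuine obstacle, and here I would lean on Westwick once more. Since by \cite{W2} there are only finitely many real three-vector orbits in dimension seven, each with a known quadruple, and since $\varphi$ and $\tilde\varphi$ lie in the two open orbits, their quadruples are forced to be the two that Westwick's classification records for open orbits, namely $(7,5,3,3)$ and $(7,4,2,2)$. To assign them it then suffices to compute a single invariant sharply, most cleanly the length: the complexification of $\varphi$ is a generic three-vector of $\bC^7$, which has complex length $4$ by the classification of complex three-vectors in \cite{W1}, so $l(\varphi),l(\tilde\varphi)\ge 4$; combined with the split decomposition this gives $l(\tilde\varphi)=4$, whence $\tilde\varphi$ carries $(7,4,2,2)$ and $\varphi$ carries $(7,5,3,3)$. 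I expect the hard part to be exactly this last circle of facts---that over $\bR$ the compact form $\varphi$ needs five simple three-vectors although its complexification needs only four (``real rank exceeds complex rank''), and the corresponding strict lower bounds for $r$ and $m$. Rather than reprove these, I would extract them from Westwick's tables in \cite{W2}, which are built on precisely the invariants $(\rho,l,r,m)$, supplemented by the complex-length data of \cite{W1}.
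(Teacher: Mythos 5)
Your proposal is correct and ultimately takes the same route as the paper: the paper gives no proof of this lemma at all, presenting both the separation of orbits by the quadruple $(\rho,l,r,m)$ and the specific values $(7,5,3,3)$ and $(7,4,2,2)$ as direct quotations of Westwick's classification in \cite{W2}. Your supplementary computations (the rank via non-degeneracy of $g$, the upper bounds from explicit decompositions, and the lower bound $l\geq 4$ via the complex length from \cite{W1}) are sound consistency checks, but as you yourself note, the load-bearing content is still extracted from Westwick's tables, exactly as in the paper.
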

In the complex case the different classes of orbits and their lengths and ranks have been determined in \cite{W1}. Using these results and the results obtained in this work before, the values of the two other invariants for the orbit of $(\G_2)_{\bC}$-structures can be computed:
\begin{lemma}\label{le:invariantscomplex}
If $(\vol_{\bC},\varphi_{\bC})$ is a $(\G_2)_{\bC}$-structure on the complex seven-dimensional space $W$, then
\begin{equation*}
(\rho(\varphi_{\bC}),l(\varphi_{\bC}),r(\varphi_{\bC}),m(\varphi_{\bC}))=(7,4,2,2).
\end{equation*}
\end{lemma}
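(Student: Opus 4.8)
The plan is to reduce to the real case handled in Lemma~\ref{le:westwick}. By Remark~\ref{re:g2*open}, every $(\G_2)_{\bC}$-structure on $W$ is $\GL(W)$-equivalent to the one induced by a $\G_2^*$-structure $\tilde{\varphi}$ on a real form $V$ of $W$, with $\varphi_{\bC}$ the $\bC$-linear extension of $\tilde{\varphi}$; since $(\rho,l,r,m)$ is $\GL(W)$-invariant it suffices to treat this case, and Lemma~\ref{le:westwick} gives $(\rho(\tilde{\varphi}),l(\tilde{\varphi}),r(\tilde{\varphi}),m(\tilde{\varphi}))=(7,4,2,2)$ over $\bR$.

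The three upper bounds follow by complexifying the corresponding real data. Since $[\tilde{\varphi}]=V^*$ has real dimension $7$, we get $[\varphi_{\bC}]\supseteq (V^*)_{\bC}=W^*$, so $\rho(\varphi_{\bC})=7$. A real writing $\tilde{\varphi}=\sum_{i=1}^4 Y_i$ with $Y_i\in G_3(V^*)$ complexifies to a writing of $\varphi_{\bC}$ as a sum of four elements of $G_3(W^*)$, whence $l(\varphi_{\bC})\le 4$. Finally, if a real $v\in[\tilde{\varphi}]\setminus\{0\}$ and a real complement $W_0$ of $\bR v$ in $[\tilde{\varphi}]$ realize $l(X_1(v,W_0))=2$ (respectively $l(X_2(v,W_0))=2$), then $v\in[\varphi_{\bC}]$, $(W_0)_{\bC}$ is a complement of $\bC v$ in $[\varphi_{\bC}]$, and the decomposition $\tilde{\varphi}=X_1\wedge v+X_2$ complexifies, so $m(\varphi_{\bC})\le 2$ (respectively $r(\varphi_{\bC})\le 2$).

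For the lower bounds I would argue as follows. By Remark~\ref{re:g2*open} the orbit of $\varphi_{\bC}$ is open in $\L^3 W^*$; by Westwick's classification \cite{W1} of the $\GL$-orbits of three-vectors in a seven-dimensional complex space the open orbit has length $4$, so $l(\varphi_{\bC})=4$. For $r$ and $m$ I would use the standard fact that for a $(\G_2)_{\bC}$-structure the two-form $e\hook\varphi_{\bC}$ has rank $4$ or $6$ for every nonzero $e\in W$ (non-null $e$ giving rank $6$, null $e$ rank $4$, as one reads off from the action of $(\G_2)_{\bC}$ on $W=\Im\,\bO_{\bC}$); in particular its rank is at least $4$. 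Now $\rho(\varphi_{\bC})=7>6$ forces $m(\varphi_{\bC})\ge 1$; and if $m(\varphi_{\bC})=1$, then $\varphi_{\bC}=\alpha\wedge\beta\wedge v+X_2$ with $v\in W^*\setminus\{0\}$, $W_0$ a six-dimensional complement of $\bC v$, $\alpha,\beta\in W_0$ and $X_2\in\L^3 W_0$; contracting with the (up to scale unique) nonzero $e\in W$ annihilating $W_0$ gives $e\hook\varphi_{\bC}=v(e)\,\alpha\wedge\beta$, a two-form of rank at most $2$, a contradiction. An entirely analogous contraction (applied to a putative writing $\varphi_{\bC}=X_1\wedge v+\gamma_1\wedge\gamma_2\wedge\gamma_3$, contracting with a nonzero $e$ annihilating $v,\gamma_1,\gamma_2,\gamma_3$) rules out $r(\varphi_{\bC})\le 1$. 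Hence $r(\varphi_{\bC})=m(\varphi_{\bC})=2$, which completes the proof.

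The main obstacle is the lower bound $l(\varphi_{\bC})\ge 4$: the real case only bounds the length from above, and length can genuinely drop under the extension $\bR\hookrightarrow\bC$, so one truly needs Westwick's complex classification \cite{W1} here, together with the identification of $\varphi_{\bC}$ as lying in the open orbit. A secondary technical point is to justify the rank-$4$-or-$6$ statement for $e\hook\varphi_{\bC}$ over $\bC$; should one wish to avoid it, the inequalities $r,m\ge 2$ can instead be read off from Westwick's list once the orbit of $\varphi_{\bC}$ has been located in it.
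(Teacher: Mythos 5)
Your treatment of $\rho(\varphi_{\bC})=7$, $l(\varphi_{\bC})=4$ and of the upper bounds $r,m\le 2$ coincides with the paper's: cite \cite{W1} for rank and length of the open orbit, and complexify the real $\G_2^*$ data of Lemma \ref{le:westwick} for $r,m\le 2$. The divergence — and the gap — is in the lower bounds. Both of your arguments for $r,m\ge 2$ rest entirely on the assertion that $e\hook\varphi_{\bC}$ has rank $4$ or $6$ for every $0\neq e\in W$, which you state as standard but do not prove. Observe that contracting a decomposition $\varphi_{\bC}=X_1\wedge v+X_2$ (with $X_1\in\L^2 W_0$, $X_2\in\L^3 W_0$) with the vector $e$ annihilating $W_0$ and satisfying $v(e)=1$ returns exactly $X_1$; since $\rho(\varphi_{\bC})=7$, every nonzero $e$ arises this way, so $m(\varphi_{\bC})=\min_{0\neq e\in W}l(e\hook\varphi_{\bC})$. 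Hence the claim ``$\mathrm{rank}(e\hook\varphi_{\bC})\ge 4$ for all $e\ne 0$'' is \emph{equivalent} to $m(\varphi_{\bC})\ge 2$: for $m$ you are assuming the statement to be proven. The dichotomy is in fact true, but establishing it needs the transitivity of $(\G_2)_{\bC}$ on the punctured null cone and on the nonzero level sets of $g_{\bC}$, plus a representative computation for each type — none of which is supplied. Your proposed fallback, reading $r,m\ge 2$ off Westwick's list, also fails as stated: \cite{W1} records only rank and length for the complex orbits, not the invariants $r$ and $m$ (those appear only in the real classification \cite{W2}), which is precisely why the lemma has to compute them.

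For contrast, the paper closes both bounds without the rank dichotomy. For $m\ge 2$ it shows that if some $e\hook\varphi_{\bC}=\omega$ satisfied $\omega^2=0$ (rank $\le 2$), then $6\,g_{\bC}(e,w)\vol_{\bC}=(e\hook\varphi_{\bC})\wedge(w\hook\varphi_{\bC})\wedge\varphi_{\bC}$ vanishes for all $w$, contradicting the non-degeneracy of $g_{\bC}$; this only needs to exclude rank $\le 2$, not establish rank $\ge 4$, and is a short self-contained computation. For $r\ge 2$ it argues that $r\le 1$ together with $l(\varphi_{\bC})=4$ forces $\varphi_{\bC}=\omega\wedge\alpha+\rho$ with $l(\omega)=3$, $l(\rho)=1$, and an analysis of $[\rho]$ against a symplectic basis for $\omega$ places $\varphi_{\bC}$ in Westwick's class VII rather than class X. If you wish to keep your route you must actually prove the rank dichotomy (orbit transitivity plus an explicit $\omega^2,\omega^3$ computation for one null and one non-null $e$ in an adapted basis); otherwise the paper's degeneracy argument is the economical substitute for the $m$ bound, and some version of the length/classification argument is still needed for $r$.
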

\begin{proof}
\cite{W1} states that $\rho(\varphi_{\bC})=7$ and $l(\varphi_{\bC})=4$. Since $\varphi_{\bC}$ is the complex-linear extension of a $\G_2^*$-structure on a real form of $W$,  Lemma \ref{le:westwick} shows $r(\varphi_{\bC})\leq 2$ and
 $m(\varphi_{\bC})\leq 2$.
 
We first show $r(\varphi_{\bC})=2$. Assume therefore that $r(\varphi_{\bC})\leq 1$. Then we must have a non-zero one-form $\alpha\in W^*$ and a complement $U$ of $\spa{\alpha}$ in $W^*$ such that the unique two-form $\omega\in \L^2 U$ and the unique three-form $\rho\in \L^3 U$ with
$\varphi_{\bC}=\omega\wedge \alpha+\rho$ fulfill $l(\rho)\leq 1$. By Lemma \ref{le:length2forms}, a two-form on a six-dimensional complex vector space has maximal length three. Thus we must have $l(\omega)=3$ and $l(\rho)=1$. By Lemma \ref{le:length2forms} (b), there exists a basis $e^1,\ldots,e^6$ of $U$ such that $\omega=e^{12}+e^{34}+e^{56}$. If $\dim{[\rho]\cap \spa{e^{2i-1},e^{2i}}}=\{2\}$ for some $i\in\{1,2,3\}$, then the length of $\alpha\wedge e^{2i-1}\wedge e^{2i}+\rho$ is one and so the length of $\varphi_{\bC}=\alpha\wedge \omega+\rho$ is at most three, contradicting $l(\varphi_{\bC})=4$. Hence $\dim{[\rho]\cap \spa{e^{2i-1},e^{2i}}}=\{1\}$ for $i=1,2,3$ and we may assume, without loss of generality, that $\rho=e^{246}$. But then $\varphi_{\bC}=e^{246}+e^{12}\wedge \alpha+e^{34}\wedge \alpha+e^{56}\wedge \alpha$ and this means that it is in class $VII$ of \cite{W1}. This class is different from class $X$ in \cite{W1}, which is the equivalence class of a $(\G_2)_{\bC}$-structure. Hence $r(\varphi_{\bC})=2$.

Finally, we show $m(\varphi_{\bC})=2$. Assume therefore that $m(\varphi_{\bC})=1$ (note that by definition $m(\varphi_{\bC})>0$ since $\varphi_{\bC}\neq 0$). Then there exists $0\neq \beta\in V^*$ and a complement $Z$ of $\spa{\beta}$ in $V^*$ such that $\varphi_{\bC}=\beta\wedge \omega+\rho_0$ with $\omega\in \L^2 Z$, $\rho_0\in \L^3 Z$ and $l(\omega)=1$. By Lemma \ref{le:length2forms} (a), $\omega^2=0$. We show that the symmetric complex bilinear form $g_{\bC}$ induced by $(\varphi_{\bC},vol_{\bC})$ is then degenerated, which is the desired contradiction. Therefore, let $v\in \Ann{Z}$ with $\beta(v)=1$. Then
\begin{equation*}
v\hook \varphi_{\bC}=\omega
\end{equation*}
and so
\begin{equation*}
6\, g_{\bC}(v,v)\vol_{\bC}=(v\hook \varphi_{\bC})\wedge (v\hook \varphi_{\bC})\wedge \varphi_{\bC}=\omega^2\wedge \varphi_{\bC} =0.
\end{equation*}
Now let $w\in \Ann{\beta}$ be arbitrary. Then
\begin{equation*}
\begin{split}
6\, g_{\bC}(v,w)\vol_{\bC}&=(v\hook \varphi_{\bC})\wedge (w\hook \varphi_{\bC})\wedge \varphi_{\bC}=\omega\wedge (-\beta\wedge (w\hook \omega)+w\hook \rho_0)\wedge (\beta\wedge \omega+\rho_0)\\
& =-\omega\wedge \beta \wedge (w\hook \omega)\wedge \rho_0+ \omega\wedge (w\hook\rho_0)\wedge \rho_0=0,
\end{split}
\end{equation*}
where the first summand is zero due to $\omega\wedge (w\hook \omega)=\frac{1}{2} w\hook \omega^2=0$ and the second summand is zero since $\omega\wedge (w\hook\rho_0)\wedge \rho_0$ is a seven-vector on the six-dimensional vector space $Z$. But so the symmetric bilinear form $g_{\bC}$ is degenerated. Thus $m(\varphi_{\bC})=2$ as claimed.
 
\end{proof}
We aim at determing the values of these invariants for the Hodge duals of $\G_2^{(*)}$-structures and of $(\G_2)_{\bC}$-structures.
Therefore, we will determining more generally how these invariants transform under Hodge star operators. To deal
with this subject, we introduce the notion of a Grassmann cone preserving maps, see \cite{KPRS}:
\begin{definition}
Let $V_1$, $V_2$ be two finite-dimensional $\bF$-vector spaces and
$g:\Lambda^{k_1} V_1\rightarrow \Lambda^{k_2} V_2$ be a linear map.
We say that $g$ is a \emph{Grassmann cone preserving map} or a
\emph{GCP map} if $g(G_{k_1}(V_1))\subseteq G_{k_2}(V_2)$.
Then $l(g(X))\leq l(X)$ for all $X\in \Lambda^{k_1} V_1$.
$g$ is called a \emph{GCP isomorphism} if it is a vector space isomorphism and
$g$ and $g^{-1}$ are both GCP maps. In this case, $l(g(X))=l(X)$ for all
$X\in \Lambda^{k_1} V_1$.

Each linear map $f:V_1\rightarrow V_2$ induces naturally a GCP map
$f_*:\Lambda^k V_1\rightarrow \Lambda^k V_2$ for all $k\in \bN$.
$f_*$ is a GCP isomorphism if and only if $f$ is a vector space isomorphism.
Such GCP isomorphisms preserve all of the numbers $\rho(X),\, l(X),\,r(X)$
and $m(X)$.

Another important type of GCP isomorphisms is given by so called \emph{dual isomorphism} $\delta$, i.e. by
maps $\delta:\L^k V\rightarrow \L^{n-k} V^*$ such that $\delta(X):=X\hook \vol$ with a volume form $\vol\in \L^n V^*$, $\vol\neq 0$.
\end{definition}
\begin{remark}
\begin{itemize}
\item
If $\delta,\, \tilde{\delta}:\L^k V\rightarrow \L^{n-k} V^*$ are both dual isomorphisms, then $\tilde{\delta}$ is a non-zero multiple of $\delta$.
\item
The standard definition in the literature \cite{KPRS} is to call a linear map $F:\L^k V\rightarrow \L^{n-k} V$, $V$ being an $n$-dimensional $\bF$-vector space, a dual isomorphism if there exists a basis $e_1,\ldots,e_n$ of $V$ such that\\
$F(e_{i_1}\wedge \ldots e_{i_k})=e_{j_1}\wedge \ldots e_{j_{n-k}}$ for all $1\leq i_1<\ldots < i_k \leq n$ and for the uniquely defined $1\leq j_1<\ldots<j_{n-k}\leq n$ with $\{i_1,\ldots,i_k,j_1,\ldots,j_{n-k}\}=\{1,\ldots,n\}$. A relation to our definition can be given by $F= (-1)^{\frac{k(k+1)}{2}} g_*\circ \delta \circ f_*$ with $f:V\rightarrow V$ being the linear map defined by $f(e_i)=(-1)^i e_i$, $\delta$ being the dual isomorphism associated to the volume form $vol=e^{1234567}$ and $g:V^*\rightarrow V$ being the linear map defined by $g(e^i)=e_i$.
\item
Let $\delta$ be a dual isomorphism in our sense for the volume form $vol=e^{1234567}$, $e^1,\ldots, e^7\in V^*$ being a basis of $V^*$. The composition of $\delta$ with the linear map $f_*:\L^{n-k} V^*\rightarrow \L^{n-k} V$ induced by the linear map $f:V^*\rightarrow V$ with $f(e^i)=e_i$ is the Hodge star operator associated to the Euclidean metric and the orientation on $V$ for which $e_1,\ldots, e_n$ is an oriented orthonormal basis. Similarly, Hodge star operators associated to pseudo-Euclidean metrics and a given orientation or to non-degenerated complex symmetric bilinear forms and a compatible complex volume form are compositions of one dual isomorphism and one $GCP$ isomorphism of the type $f_*$.
\end{itemize}
\end{remark}
Hence, to determine the values of the invariants $\rho(X),\, l(X),\, m(X),\, r(X)$ under Hodge star operators it suffices to determine the values of these invariants under dual isomorphisms. Therefore we observe the following:

The image of a decomposable non-zero $k$-vector $X=v_1\wedge \ldots \wedge v_k$ is a
non-zero $(n-k)$-form $\Omega\in \L^{n-k} \Ann{[X]}$, where $\Ann{[X]}$ is the annihilator of $[X]$. Since $\dim{\Ann{[X]}}=n-k$, it has to be decomposable.
Thus $\delta: \L^k V\rightarrow \L^{n-k} V^*$ is a GCP homomorphism. It is, in fact, a GCP isomorphism since the inverse of $\delta(X)=X\hook\vol$ is again a dual isomorphism, namely $\delta^{-1}(\psi)=\psi\hook \nu$ with $\nu\in \L^n V^{**}\cong\L^n V$, $\nu(\vol)=1$. These observations imply
\begin{lemma}\label{le:dualiso}
Let $V$ be an $n$-dimensional $\bF$-vector space, $k\in \{1,\ldots,n-1\}$ and
$\delta:\L^k V\rightarrow \L^{n-k} V^*$ be a dual isomorphism. Then $\delta$ is a GCP isomorphism
and so $l(\delta(X))=l(X)$ for all $X\in \L^k V$. If $r(X)=0$, then $\rho(\delta(X))<n$. Moreover, if $X\in \L^k V$
fulfills $\rho(X)=n$ and $r(X)>0$, then $\rho(\delta(X))=\rho(X)$,
$m(\delta(X))=r(X)$ and $r(\delta(X))=m(X)$.
\end{lemma}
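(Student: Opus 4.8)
The plan is to prove Lemma~\ref{le:dualiso} in four pieces, corresponding to the four assertions. The first assertion, that $\delta$ is a GCP isomorphism, is already essentially established in the paragraph preceding the statement: a nonzero decomposable $k$-vector $X = v_1 \wedge \cdots \wedge v_k$ has support $[X] = \spa{v_1,\ldots,v_k}$, so $\delta(X) = X \hook \vol$ lies in $\L^{n-k}\Ann{[X]}$, which is one-dimensional in degree $n-k$ since $\dim \Ann{[X]} = n-k$; hence $\delta(X)$ is decomposable. The same argument applied to the inverse dual isomorphism $\delta^{-1}(\psi) = \psi \hook \nu$, $\nu \in \L^n V^{**} \cong \L^n V$ with $\nu(\vol) = 1$, shows $\delta^{-1}$ is GCP as well, so $\delta$ is a GCP isomorphism and therefore preserves irreducible length: $l(\delta(X)) = l(X)$ for all $X \in \L^k V$. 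I would state this step essentially verbatim from the preamble.

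Next I would handle the rank statements. The key observation is the contravariant relationship between support and annihilator: if $W$ is a subspace of $V$, then $\delta$ restricts to an isomorphism $\L^k W \to \L^{n-k}\Ann{W}$ when $\dim W \geq k$ (one checks $X \in \L^k W \iff X \hook \vol \in \L^{n-k}\Ann{W}$ by picking a basis adapted to $W$). From this one gets $\Ann{[\delta(X)]} \subseteq [X]$, and running the same argument with $\delta^{-1}$ gives the reverse, hence $[\delta(X)] = \Ann{[X]}$, so $\rho(\delta(X)) = n - \rho(X)$ \emph{when} one works inside the span $[X]$ --- but the subtlety the lemma is pointing at is that $\delta$ is defined on all of $\L^k V$, not just on $\L^k[X]$, and the support of $\delta(X)$ as an element of $\L^{n-k}V^*$ need not be all of $V^*$. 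So I would argue directly: $\rho(\delta(X))$ equals the dimension of the image of $\alpha \mapsto \alpha \hook \delta(X)$, $V^{**} \to \L^{n-k-1}V^*$; unwinding $\alpha \hook (X \hook \vol) = (X \wedge \alpha)\hook\vol$ (up to sign, with $\alpha \in V^{**} \cong V$), this image is the $\delta$-image of $\{X \wedge v : v \in V\} \subseteq \L^{k+1}V$, and since $\delta$ is injective, $\rho(\delta(X)) = \dim\{X \wedge v : v \in V\}$. Now $X \wedge v = 0$ iff $v \in [X]$ precisely when $\rho(X)$ is... no --- $X \wedge v = 0$ forces $v \in [X]$ always for $X \neq 0$, but the converse (every $v \in [X]$ kills $X$) holds iff $X$ is, up to scalar, the full wedge of a basis of $[X]$, i.e. iff $r(X) = 0$. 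If $r(X) = 0$ then $\{X \wedge v\}$ is $(n - \rho(X))$-dimensional, so $\rho(\delta(X)) = n - \rho(X) < n$, giving the second claim. If $\rho(X) = n$ and $r(X) > 0$, then $X \wedge v = 0$ only for $v = 0$, so the map $v \mapsto X \wedge v$ is injective, $\rho(\delta(X)) = n$, establishing the first half of the last claim.

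For the remaining identities $m(\delta(X)) = r(X)$ and $r(\delta(X)) = m(X)$ under the hypotheses $\rho(X) = n$, $r(X) > 0$, I would track the decomposition $X = X_1 \wedge v + X_2$ through $\delta$. Fix $v \in [X] = V$ nonzero and a complement $W$ of $\spa{v}$ in $V$; choose a basis $e_1 = v, e_2,\ldots,e_n$ with $e_2,\ldots,e_n$ spanning $W$, and take $\vol = e^{1\cdots n}$. Write $X = X_1 \wedge v + X_2$ with $X_1 \in \L^{k-1}W$, $X_2 \in \L^k W$. Let $\alpha = e^1 \in V^*$ and $U = \spa{e^2,\ldots,e^n} = \Ann{v}$, a complement of $\spa{\alpha}$. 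A direct computation of $X \hook \vol$ gives $\delta(X) = \pm\bigl(\delta_W(X_2)\wedge \alpha + \delta_W(X_1)\bigr)$ for appropriate signs, where $\delta_W : \L^j W \to \L^{n-1-j}U$ is the dual isomorphism on $W$ for $\vol_W := e^{2\cdots n}$; so the ``$X_1$-part'' of $\delta(X)$ with respect to $(\alpha, U)$ is $\pm\delta_W(X_2)$ and the ``$X_2$-part'' is $\pm\delta_W(X_1)$. Since $\delta_W$ preserves length, $l(\delta_W(X_1)) = l(X_1)$ and $l(\delta_W(X_2)) = l(X_2)$. As $(\alpha,U)$ ranges over all valid splittings of $V^*$ and $(v,W)$ over all valid splittings of $V$, these correspond bijectively, so $D(\delta(X))$ consists exactly of the $\pm\delta_W(X_2(v,W))$ and $E(\delta(X))$ of the $\pm\delta_W(X_1(v,W))$; taking minima of lengths gives $m(\delta(X)) = \min_v l(X_2(v,W)) = r(X)$ and $r(\delta(X)) = \min_v l(X_1(v,W)) = m(X)$. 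The point requiring care --- and the main obstacle I anticipate --- is making the bijection between splittings rigorous and getting the signs and the identification $[\delta(X)] = V^*$ exactly right: one must use $\rho(X) = n$ to know every $v \in V \setminus\{0\}$ genuinely lies in $[X]$ so that all splittings of $V$ are admissible, and use $\rho(\delta(X)) = n$ (proved in the previous step) so that all splittings of $V^*$ are admissible for computing $m(\delta(X)), r(\delta(X))$; without both of these the sets $D,E$ on the two sides would not match up. The rest is bookkeeping with wedge and interior products.
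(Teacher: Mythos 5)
Your overall strategy is sound and, for the substantive part of the lemma (the exchange $m\leftrightarrow r$), runs parallel to the paper's: both arguments track the decomposition $X=X_1\wedge v+X_2$ through $\delta$, using that $\delta(X_1\wedge v)$ lands in $\L^{n-k}\Ann{\spa{v}}$ while $\delta(X_2)$ is divisible by a covector annihilating $W$. The one methodological difference is at the end: you set up an explicit bijection between splittings $(v,W)$ of $V$ and splittings $(\alpha,U)$ of $V^*$ and identify $D(\delta(X))$ with $\{\pm\delta_W(X_2(v,W))\}$ and $E(\delta(X))$ with $\{\pm\delta_W(X_1(v,W))\}$, obtaining both equalities at once, whereas the paper only extracts the two inequalities $r(\delta(X))\le m(X)$ and $m(\delta(X))\le r(X)$ from optimal choices of $(v,W)$ and then closes the loop by applying the same inequalities to the dual isomorphism $\delta^{-1}$. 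Your bijection does work --- and you correctly flag that one needs $\rho(X)=n$ and $\rho(\delta(X))=n$ so that every splitting is admissible on both sides --- but the paper's $\delta^{-1}$ trick spares one the sign bookkeeping and the verification that the correspondence is onto.

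There is, however, one incorrect intermediate claim in your treatment of the rank: you assert that $r(X)=0$ holds iff every $v\in[X]$ satisfies $X\wedge v=0$, i.e.\ iff $X$ is, up to scalar, the top wedge of a basis of $[X]$. That is false: $r(X)=0$ only means that \emph{some} nonzero $v$ satisfies $X\wedge v=0$ (equivalently $X=X_1\wedge v$). For instance $X=e_1\wedge(e_2\wedge e_3+e_4\wedge e_5)$ in a five-dimensional space has $\rho(X)=5$ and $r(X)=0$, yet $X\wedge e_2\neq 0$ and $X$ is not decomposable; so your dimension count $\dim{\{X\wedge v\mid v\in V\}}=n-\rho(X)$ is wrong. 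The conclusions you need nevertheless survive: $r(X)=0$ produces a nonzero element of the kernel of $v\mapsto X\wedge v$, so the image has dimension at most $n-1$ and $\rho(\delta(X))<n$; and in the case $\rho(X)=n$, $r(X)>0$ the kernel is zero, giving $\rho(\delta(X))=n$ exactly as you say. So this is a local overstatement rather than a structural gap, but it should be corrected --- the paper sidesteps it by arguing directly that $X=X_1\wedge v$ forces $\delta(X)=X_1\hook(v\hook\vol)\in\L^{n-k}\Ann{\spa{v}}$.
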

\begin{proof}
We only have to show the second and the third part. Let $0\neq vol\in \L^n V^*$ be the volume form associated to $\delta$. If $X\in \L^k V$ with $r(X)=0$, then there exists $0\neq v\in V$, a complement $W$ of $\spa{v}$ in $V$ and $X_1\in \L^{k-1} V$ such that $X=v\wedge X_1$. But then $\delta(X)=(v\wedge X_1)\hook vol=X_1\hook(v\hook vol)$ and the considerations before the Lemma show $v\hook vol\in \L^{n-1} \spa{v}^0$. Thus $\delta(X)\in \L^{n-k} \spa{v}^0$. Since $\spa{v}^0$ is a subspace of codimension one in $V^*$, we get $\rho(\delta(X))<n$.

Next, let $X$ be a $k$-vector with $\rho(X)=n$ and $r(X)>0$. If the rank of $\delta(X)$ is less than $n$, then there
exists $v\in V$, $v\neq 0$ such that $v\hook\delta(X)= 0$. Thus
\begin{equation*}
\delta(X\wedge v)=(X\wedge v)\hook \vol= v\hook\left(X\hook \vol\right)=v\hook \delta(X)= 0,
\end{equation*}
and so $X\wedge v=0$. We decompose $X=X_1\wedge v+ X_2$ with $X_1\in \L^{k-1} W$, $X_2\in \L^k W$ for some complement $W$ of
$\spa{v}$ in $V$. But then $r(X)>0$ (note that $[X]=V$ due to $\rho(X)=n$) implies $X_2\neq 0$ and so $X\wedge v=X_2\wedge v\neq 0$, a contradiction. Hence $\rho(\delta(X))=n$.

Now we compute the values $r(\delta(X))$ and $m(\delta(X))$. Therefore, let $v\in V$, $v\neq 0$ and $W$ be a complement of $\spa{v}$ in $V$. Decompose
\begin{equation*}
X=X_1\wedge v+X_2
\end{equation*}
uniquely with $X_1\in \L^{k-1} W$ and $X_2\in \L^k W$. Analogously to above, we get $\delta(X_1\wedge v)\in \L^{n-k} \Ann{\spa{v}}$. Moreover, the considerations directly before the lemma imply $\delta(X_2)\in \L^{n-k} \Ann{[X_2]}\subseteq \L^1 \Ann{W}\wedge \L^{n-k-1} \Ann{\spa{v}}$. Thus $\delta(X_2)=\alpha\wedge Y_2$ for some $0\neq \a\in \Ann{W}$ and for $Y_2\in \L^{n-k-1} \Ann{\spa{v}}$. This is true for each possible choice of $v$ and $W$. If we now choose $v$ and $W$ such that $l(X_1)=m(X)$ (note that this is only possible since $\rho(X)=n)$, then $\rho(\delta(X))=n$, the fact that $\Ann{\spa{v}}$ is a complement of $\spa{\a}$ in $V^*$ and that $\delta$ a $GCP$-map imply $m(X)=l(X_1)=l(X_1\wedge v)=l(\delta(X_1\wedge v))\geq r(\delta(X))$. Similarly, if we choose $v,W$ such that $l(X_2)=r(X)$, we obtain $r(X)=l(X_2)=l(\delta(X_2))\geq m(\delta(X))$.

$\delta^{-1}$ is again a dual isomorphism. Thus $r(\delta(X))=0$ would imply $\rho(X)<n$, a contradiction. Hence $r(\delta(X))>0$ and we already proved $\rho(\delta(X))=n$. But then we may apply the just proven and get $m(\delta(X))\geq r(\delta^{-1}(\delta(X)))=r(X)$ and $r(\delta(X))\geq m(\delta^{-1}(\delta(X)))$. Thus equality holds in both cases and the statement is proven.
\end{proof}
Lemma \ref{le:length2forms} and Lemma \ref{le:dualiso} imply the following result on standard forms of $(n-2)$-vectors of length $l$:
\begin{lemma}\label{le:length(n-2)forms}
Let $V$ be an $n$-dimensional $\bF$-vector space and $Y\in \Lambda^{n-2} V$ be an $(n-2)$-vector on $V$. In a wedge product, denote by $\widehat{v}$ for $v\in V$ a vector which is omitted in this product. Then:
\begin{enumerate}
\item
$Y$ has length $l<\frac{n}{2}$ if and only if there exists a basis $w_1,\ldots ,w_n$ of $V$ such that
\begin{equation*}
Y=\sum_{i=1}^l  w_{1}\wedge \ldots \widehat{w_{2i-1}} \wedge \widehat{w_{2i}} \wedge \ldots \wedge w_n.
\end{equation*}
\item
$Y$ has length $l=\frac{n}{2}$ if and only there exists a basis $w_1,\ldots,w_n$ such that
\begin{equation*}
Y=\pm\left(\sum_{i=1}^l  w_{1}\wedge \ldots \widehat{w_{2i-1}} \wedge \widehat{w_{2i}} \wedge \ldots \wedge w_n\right).
\end{equation*}
\end{enumerate}
\end{lemma}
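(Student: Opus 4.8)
The plan is to push the statement through a dual isomorphism $\delta\colon\Lambda^{n-2}V\to\Lambda^2 V^*$, $\delta(X)=X\hook\vol$, and then read off everything from the classification of $2$-vectors in Lemma~\ref{le:length2forms}. The computational seed is the observation that $\delta$ carries the standard $(n-2)$-vectors to the standard $2$-forms: if $w_1,\dots,w_n$ is a basis of $V$ with dual basis $e^1,\dots,e^n$ of $V^*$ and $\vol=e^1\wedge\dots\wedge e^n$, then evaluating both sides on tuples of the $w_j$ gives directly
\begin{equation*}
\delta\bigl(w_1\wedge\dots\wedge\widehat{w_{2i-1}}\wedge\widehat{w_{2i}}\wedge\dots\wedge w_n\bigr)=e^{2i-1}\wedge e^{2i}\qquad(1\le i\le\lfloor n/2\rfloor),
\end{equation*}
the permutation that moves the block $(w_{2i-1},w_{2i})$ to the last two slots being even, so that no sign appears. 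Hence $\delta$ sends $\pm\sum_{i=1}^l w_1\wedge\dots\wedge\widehat{w_{2i-1}}\wedge\widehat{w_{2i}}\wedge\dots\wedge w_n$ to $\pm\sum_{i=1}^l e^{2i-1}\wedge e^{2i}$, and $\delta$ is a GCP isomorphism by Lemma~\ref{le:dualiso}, hence length-preserving.

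For the implications ``$\Leftarrow$'' in (a) and (b) I would take $\delta$ with $\vol=e^1\wedge\dots\wedge e^n$; then $\delta(Y)=\pm\sum_{i=1}^l e^{2i-1}\wedge e^{2i}$ has $l$-th power $\pm l!\,e^1\wedge\dots\wedge e^{2l}\ne0$ and vanishing $(l+1)$-st power, so Lemma~\ref{le:length2forms}(a) gives $l(\delta(Y))=l$, whence $l(Y)=l$. For ``$\Rightarrow$'' in (a), fix any dual isomorphism $\delta$, with volume form $\vol$. By Lemma~\ref{le:dualiso} the $2$-form $\omega:=\delta(Y)$ has length $l$, so Lemma~\ref{le:length2forms}(b) lets us write $\omega=\sum_{i=1}^l\beta^{2i-1}\wedge\beta^{2i}$ with $\beta^1,\dots,\beta^{2l}$ linearly independent; since $2l<n$ we extend to a basis $\beta^1,\dots,\beta^n$ of $V^*$. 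Writing $\vol=\kappa\,\beta^1\wedge\dots\wedge\beta^n$ and replacing $\beta^n$ by $\kappa\beta^n$ — which leaves $\omega$ unchanged because $n\notin\{1,\dots,2l\}$ — we arrange $\vol=\beta^1\wedge\dots\wedge\beta^n$. Applying $\delta^{-1}$ together with the identity above (with $e^j=\beta^j$ and $(w_j)$ the dual basis) gives $Y=\sum_{i=1}^l w_1\wedge\dots\wedge\widehat{w_{2i-1}}\wedge\widehat{w_{2i}}\wedge\dots\wedge w_n$.

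The case ``$\Rightarrow$'' in (b) runs along the same lines, but now $2l=n$, so there is no spare basis covector to compensate the discrepancy between $\vol$ and the wedge of a symplectic basis of $\omega$; this discrepancy is exactly what forces the sign, and handling it is the crux of the argument. From $\omega=\delta(Y)=\sum_{i=1}^l\beta^{2i-1}\wedge\beta^{2i}$ with $\beta^1,\dots,\beta^n$ a basis and $\vol=\lambda\,\beta^1\wedge\dots\wedge\beta^n$, $\lambda\ne0$, I would seek $c\in\bF\setminus\{0\}$ and a sign $\sigma\in\{\pm1\}$, put $e^{2i-1}:=\sigma c\,\beta^{2i-1}$, $e^{2i}:=\beta^{2i}$, and pass to the dual isomorphism $\delta^\ast$ with $\vol^\ast:=e^1\wedge\dots\wedge e^n=(\sigma c)^l\,\beta^1\wedge\dots\wedge\beta^n$, so that $\delta^\ast=(\vol^\ast/\vol)\,\delta$. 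Then $\delta^\ast(Y)=(\sigma c)^l\lambda^{-1}\omega$, while $\sum_{i=1}^l e^{2i-1}\wedge e^{2i}=\sigma c\,\omega$, hence $\delta^\ast(Y)=\sigma\sum_{i=1}^l e^{2i-1}\wedge e^{2i}$ exactly when $(\sigma c)^l\lambda^{-1}=\sigma^2 c$, i.e. when $c^{\,l-1}=\sigma^l\lambda$. Over $\bF\in\{\bR,\bC\}$ such $(c,\sigma)$ always exist: over $\bC$ take $\sigma=1$ and any $(l-1)$-th root; over $\bR$ take $\sigma=1$ if $l-1$ is odd and $\sigma=\sgn(\lambda)$ if $l-1$ is even, so that $\sigma^l\lambda$ has a real $(l-1)$-th root. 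With this choice, the identity from the first paragraph applied to the dual basis $(w_j)$ of $(e^j)$ gives $Y=\sigma\sum_{i=1}^l w_1\wedge\dots\wedge\widehat{w_{2i-1}}\wedge\widehat{w_{2i}}\wedge\dots\wedge w_n$. The one genuinely delicate point is this sign-and-scaling bookkeeping in (b); everything else is a faithful transcription of Lemma~\ref{le:length2forms} across the dual isomorphism.
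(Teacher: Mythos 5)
Your proof is correct and follows essentially the same route as the paper's: transport $Y$ through a dual isomorphism, apply Lemma~\ref{le:length2forms}(b) to the resulting two-form, and pull the normal form back, absorbing the leftover scalar into the basis (with the residual $\pm$ unavoidable only when $2l=n$, since then no spare basis vector is available). The only cosmetic difference is the bookkeeping of that scalar --- you rescale the odd-indexed covectors and solve $c^{l-1}=\sigma^l\lambda$, while the paper rescales all basis vectors uniformly by $|\alpha|^{1/(n-2)}$ --- but the underlying idea is identical.
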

\begin{proof}
Let $\delta:\L^{n-2} V\rightarrow \L^2 V^*$ be a dual isomorphism. Then $Y\in \L^{n-2} V$ is of length $l$ if and only if $\delta(Y)$ is of length $l$
and Lemma \ref{le:length2forms} (b) tells us that this is the case if and only if there exists a basis $v^1,\ldots, v^n\in V^*$ of $V^*$ such that $\delta(Y)=\sum_{i=1}^l v^{2i-1}\wedge v^{2i}$. Now
$\alpha \delta(Y)=Y\hook v^{1\ldots n}$ for some $\alpha\neq 0$ and so $Y\in \L^{n-1} V$ is of length $l$ if and only if there exists a basis $v_1,\ldots, v_n$ such that $Y= \sum_{i=1}^l  \alpha\, v_{1}\wedge \ldots \widehat{v_{2i-1}} \wedge \widehat{v_{2i}} \wedge \ldots \wedge v_n$ holds. Setting $w_j:=|\alpha|^{\frac{1}{n-2}} v_j$ for $j=1,\ldots,n$, we get
\begin{equation*}
Y=\pm (\sum_{i=1}^l  w_{1}\wedge \ldots \widehat{w_{2i-1}} \wedge \widehat{w_{2i}} \wedge \ldots \wedge w_n).
\end{equation*}
This shows (b). If $l<\frac{n}{2}$, the vector $w_n$ appears in each summand. Hence, by changing the sign of $w_n$, if necessary, we can change the overall sign to $+$ and so (a) follows.
\end{proof}
Lemma \ref{le:westwick} and Lemma \ref{le:dualiso} imply
\begin{proposition}\label{pro:inv}
Let $\varphi$ be a $\G_2$-structure on a seven-dimensional real vector
space $V$, let $\tilde{\varphi}$ be a $\G_2^*$-structure on $V$ and let $\varphi_{\bC}$ be a $(\G_2)_{\bC}$-structure
on a seven-dimensional complex vector space $W$. Then
\begin{equation*}
\begin{split}
(\rho(\star_{\vp} \vp),l(\star_{\vp} \vp),m(\star_{\vp} \vp),r(\star_{\vp} \vp))&=(7,5,3,3),\quad (\rho(\star_{\tilde{\vp}} \tilde{\vp}),l(\star_{\tilde{\vp}} \tilde{\vp}),m(\star_{\tilde{\vp}} \tilde{\vp}),r(\star_{\tilde{\vp}} \tilde{\vp}))=(7,4,2,2),\\
(\rho(\star_{\vp_{\bC}} \vp_{\bC}),l(\star_{\vp_{\bC}} \vp_{\bC}),m(\star_{\vp_{\bC}} \vp_{\bC}),r(\star_{\vp_{\bC}} \vp_{\bC}))&=(7,4,2,2).
\end{split}
\end{equation*}
\end{proposition}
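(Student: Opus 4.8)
The proposition is essentially a corollary of Lemmas \ref{le:westwick}, \ref{le:invariantscomplex} and \ref{le:dualiso}, so the plan is to reduce the computation of the invariants of the Hodge duals to the known behaviour of these invariants under dual isomorphisms.

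First I would recall, from the last remark preceding this proposition, that every Hodge star operator $\star_{\vp}\colon \L^k V^*\to \L^{n-k} V^*$, and likewise $\star_{\vp_{\bC}}$, factors as a composition of one dual isomorphism with GCP isomorphisms of the type $f_*$, up to a non-zero scalar factor. Since GCP isomorphisms of the type $f_*$ preserve all four numbers $\rho, l, m, r$, and since multiplying a $k$-vector by a non-zero scalar changes none of $\rho, l, m, r$ either (the support, a decomposition into decomposables, and the sets $D$ and $E$ are all unaffected), it suffices to track what a single dual isomorphism does to $\vp$, $\tilde{\vp}$ and $\vp_{\bC}$, viewed as elements of $\L^3 V^*$ resp.\ $\L^3 W^*$. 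Concretely, one applies Lemma \ref{le:dualiso} with the roles of $V$ and $V^*$ interchanged and with $n=7$, $k=3$, so that a dual isomorphism is a map $\L^3 V^*\to \L^4 V^{**}\cong \L^4 V$.

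For a $\G_2$-structure $\vp$, Lemma \ref{le:westwick} gives $\rho(\vp)=7$, which equals the dimension of $V^*$, and $r(\vp)=3>0$, so the hypotheses of Lemma \ref{le:dualiso} are met; hence for a dual isomorphism $\delta$ we get $\rho(\delta(\vp))=7$, $l(\delta(\vp))=l(\vp)=5$, $m(\delta(\vp))=r(\vp)=3$ and $r(\delta(\vp))=m(\vp)=3$. Because $r(\vp)=m(\vp)=3$, the swap of the last two entries is invisible, and combining with the first paragraph yields $(\rho,l,m,r)(\star_{\vp}\vp)=(7,5,3,3)$. The same argument, now using $\rho(\tilde{\vp})=7$, $r(\tilde{\vp})=2>0$, $l(\tilde{\vp})=4$, $m(\tilde{\vp})=2$ from Lemma \ref{le:westwick}, gives $(\rho,l,m,r)(\star_{\tilde{\vp}}\tilde{\vp})=(7,4,2,2)$; and using $\rho(\vp_{\bC})=7$, $l(\vp_{\bC})=4$, $r(\vp_{\bC})=m(\vp_{\bC})=2$ from Lemma \ref{le:invariantscomplex} gives $(\rho,l,m,r)(\star_{\vp_{\bC}}\vp_{\bC})=(7,4,2,2)$.

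I do not expect a genuine obstacle here; the only points needing a little care are: (i) Lemma \ref{le:dualiso} is phrased for $k$-vectors in $V$, whereas it must be applied to $k$-covectors, so one works throughout with the underlying vector space $V^*$ and its dual $V^{**}\cong V$; (ii) the hypothesis $r(X)>0$ of Lemma \ref{le:dualiso} must actually hold in all three cases, which is precisely why the values $r(\vp)=3$, $r(\tilde{\vp})=2$, $r(\vp_{\bC})=2$ from Lemmas \ref{le:westwick} and \ref{le:invariantscomplex} are \emph{needed}, not merely convenient; and (iii) the harmless reordering of the quadruple between the cited lemmas, which list $(\rho,l,r,m)$, and the present statement, which lists $(\rho,l,m,r)$ — legitimate exactly because $r=m$ for each of the three structures.
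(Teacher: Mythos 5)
Your proposal is correct and follows exactly the route the paper intends: the paper derives Proposition \ref{pro:inv} directly from Lemma \ref{le:westwick}, Lemma \ref{le:invariantscomplex} and Lemma \ref{le:dualiso}, using the preceding remark that Hodge star operators factor as a dual isomorphism composed with a GCP isomorphism of type $f_*$. Your additional care about the $(\rho,l,r,m)$ versus $(\rho,l,m,r)$ ordering and the hypothesis $r(X)>0$ is exactly the right bookkeeping, and since $r=m$ for all three structures the swap under $\delta$ is indeed invisible.
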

So, if $\star_{\varphi}\varphi$ is the Hodge dual of a $\G_2$-structure then, for each choice of a non-zero one-form $\alpha\in V^*$ and of a complement $W$ of $\spa{\alpha}$ in $V^*$ the unique four-form $\Omega\in \L^4 W$ and the unique three-form $\rho\in \L^3 W$ with $\star_{\varphi} \varphi=\Omega+\rho\wedge \alpha$ fulfill $l(\Omega)\geq 3$ and $l(\rho)\geq 3$. Since these lengths are at most three by  \cite{W2} and Lemma \ref{le:length(n-2)forms}, we get equality, i.e. $l(\Omega)=3$ and $l(\rho)=3$. Even more, the values of the other invariants of $\Omega$ and $\rho$ are uniquely determined:
\begin{lemma}\label{le:Jrho}
Let $V$ be a seven-dimensional vector space, let $\varphi\in \L^3 V^*$ be a $\G_2$-structure and let $\star_{\varphi}\varphi$ be its Hodge dual. Choose a non-zero one-form $\alpha\in V^*$ and a complement $W$ of $\spa{\alpha}$ in $V^*$. Denote by $\Omega\in \L^4 W$ and by $\rho\in \L^3 W$ the unique forms such that $\star_{\varphi}\varphi=\Omega+\rho\wedge \alpha$. Then:
\begin{enumerate}
\item
$(\rho(\Omega),l(\Omega),m(\Omega),r(\Omega))=(6,3,2,1)$.
\item
$(\rho(\rho),l(\rho),m(\rho),r(\rho))=(6,3,2,2)$.
\end{enumerate}
\end{lemma}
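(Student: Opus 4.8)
The plan is to reduce everything to the explicit model of the Hodge dual and compute the invariants on a cleverly chosen one-form $\alpha$, then argue that all choices are $\GL$-equivalent. Since the set of Hodge duals of $\G_2$-structures is a single $\GL(V^*)$-orbit (Remark \ref{re:Hodgedual}), and since for a fixed $\star_{\vp}\vp$ the group $\G_2$ acts transitively on the nonzero one-forms $\alpha\in V^*$ (indeed $\G_2$ acts transitively on the unit sphere of the induced Euclidean metric, and scaling $\alpha$ only scales $\Omega$ and $\rho$, which leaves all four invariants unchanged), it suffices to verify (a) and (b) for one $\G_2$-structure, one one-form $\alpha$, and one complement $W$. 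Take the adapted basis $e_1,\dots,e_7$ with $\star_{\vp}\vp= e^{1256}+e^{3456}+e^{1234}- e^{2467}+e^{2357}+e^{1457}+e^{1367}$ as in (\ref{eq:Hodgedual}) with $\epsilon=1$, set $\alpha:=e^7$ and $W:=\spa{e^1,\dots,e^6}$. Then immediately $\Omega=e^{1256}+e^{3456}+e^{1234}\in\L^4 W$ and $\rho=-e^{246}+e^{235}+e^{145}+e^{136}\in\L^3 W$ (reading off the coefficient of $e^7$ with the correct sign convention $\rho\wedge\alpha$).

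For part (a): $\Omega=e^{12}\wedge e^{56}+e^{34}\wedge e^{56}+e^{12}\wedge e^{34}=(e^{12}+e^{34})\wedge e^{56}+e^{12}\wedge e^{34}$; rank is clearly $6$, and applying a dual isomorphism $\delta:\L^4 W\to\L^2 W^*$ one computes $\delta(\Omega)$ is (up to sign) a sum of three decomposable two-vectors on six-dimensional $W^*$ and of maximal length, so by Lemma \ref{le:length2forms}(a) $l(\Omega)=l(\delta(\Omega))=3$. Then Lemma \ref{le:dualiso} (with $n=6$, $k=2$ for the dual side) gives $\rho(\Omega)=6$, $m(\Omega)=r(\delta(\Omega))$ and $r(\Omega)=m(\delta(\Omega))$, so I just need $(m,r)$ of the two-form $\delta(\Omega)$: for a length-three two-form $\o=f^1\wedge f^2+f^3\wedge f^4+f^5\wedge f^6$ on a six-space one peels off a vector $f^1$ to get $X_1=f^2$ (length $1$, forced) and $X_2=f^3\wedge f^4+f^5\wedge f^6$ (length $2$, and one checks no choice does better), giving $m(\delta(\Omega))=1$, $r(\delta(\Omega))=2$, hence $(m(\Omega),r(\Omega))=(2,1)$. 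This yields $(6,3,2,1)$.

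For part (b): $\rho=-e^{246}+e^{235}+e^{145}+e^{136}$; one checks $[\rho]=\spa{e^1,\dots,e^6}$ so $\rho(\rho)=6$, and that $\rho$ is not decomposable and not a sum of two decomposables — the cleanest way is to note this is (the dual incarnation of) a generic alternating three-form on a six-space, whose length is known to be $3$; alternatively exhibit the decomposition $\rho=e^2\wedge(e^{35}-e^{46})+e^1\wedge(e^{45}+e^{36})$, wait that is only two terms times... — more carefully, write $\rho=e^{136}+e^{145}+e^{235}-e^{246}$ and group as $e^1\wedge(e^{36}+e^{45})+e^2\wedge(e^{35}-e^{46})$; since $e^{36}+e^{45}$ and $e^{35}-e^{46}$ are each nondecomposable two-forms (length $2$ on the four-space $\spa{e^3,e^4,e^5,e^6}$), this displays $l(\rho)\le 4$, but Proposition \ref{pro:inv} together with $l(\star_{\vp}\vp)=5=l(\Omega\text{-part sum})$ forces $l(\rho)\ge 3$; to get $l(\rho)=3$ exactly, produce an explicit sum of three decomposables, e.g.\ by a linear change absorbing one generator. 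Finally compute $(m,r)$: choosing $v:=e^6$, a complement $W'$, one gets $\rho=e^6\wedge(e^{13}-e^{24})+(e^{145}+e^{235})$ wait this needs the sign $e^{136}=e^6\wedge e^{13}$ up to sign, so $X_1= \pm(e^{13}-e^{24})$ which has length $2$ — and one checks no peeling gives $l(X_1)\le 1$ — so $m(\rho)=2$, while the remainder $X_2=e^{145}+e^{235}=e^5\wedge(e^{14}+e^{23})$ hmm that's length... $e^{145}+e^{235}=(e^1\wedge e^4+e^2\wedge e^3)\wedge e^5$, which is decomposable? No: $(e^{14}+e^{23})$ is nondecomposable, so $X_2$ has length $2$; checking all peelings shows $r(\rho)=2$. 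This gives $(6,3,2,2)$.

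\textbf{Main obstacle.} The genuinely delicate point is pinning the exact length values $l(\Omega)=3$ and $l(\rho)=3$ and the minimization values $m,r$ — i.e.\ ruling out shorter decompositions under \emph{every} choice of the vector $v$ and complement $W$ in the definitions of $D$ and $E$. For $\Omega$ this is handled cleanly by the dual-isomorphism trick reducing to two-forms (Lemmas \ref{le:length2forms}, \ref{le:dualiso}), where everything is controlled by wedge powers. For $\rho$ there is no such shortcut; here I would lean on Proposition \ref{pro:inv} (which gives $l(\star_{\vp}\vp)=5$) split along $\alpha$: since $l(\star_{\vp}\vp)\le l(\Omega)+l(\rho)$ and $l(\Omega)=3$ is already established, $l(\rho)\ge 2$, and a small separate argument (e.g.\ that $\rho$ decomposable or of length $\le 2$ would force, via re-wedging with $\alpha$, a contradiction with the $(7,5,3,3)$ data of Lemma \ref{le:westwick}) upgrades this to $l(\rho)\ge 3$; combined with an explicit length-$3$ decomposition this closes it. The $m$ and $r$ computations then reduce to finitely many cases in a six-dimensional space and are routine once the length of $\rho$ is known.
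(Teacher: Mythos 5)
Your part (a) is essentially sound and takes a genuinely different route from the paper: you dualize $\Omega$ to a two-form and read off $(m,r)$ there via Lemma \ref{le:dualiso}, whereas the paper bounds the pieces of \emph{every} peeling directly by dimension counts (four-forms in five dimensions have length at most one, three-forms at most two, and $l(\Omega)=3$ then forces $l(\tilde\Omega)=1$, $l(\tilde\rho)=2$ for every choice). Both work, though you should note that Lemma \ref{le:dualiso} requires $r(\Omega)>0$ before you invoke it. The first real gap is in your reduction to a single choice of $\alpha$ and $W$: $\G_2$-transitivity on the unit sphere plus rescaling handles $\alpha$, but for fixed $\alpha$ the complements $W$ form a six-parameter family on which the stabilizer $\SU(3)\subseteq\G_2$ of $\alpha$ does not act transitively, and $\Omega$ and $\rho$ genuinely change with $W$. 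This is patchable ($\Omega$ is the image of $\star_{\varphi}\varphi$ in $\L^4(V^*/\spa{\alpha})$ up to a linear isomorphism between complements, and $\rho=-v\hook\star_{\varphi}\varphi$ where $v$ runs over all vectors with $\alpha(v)=1$, so transitivity of $\G_2$ on spheres in $V$ covers all the $\rho$'s), but as written the reduction is incomplete; the paper avoids the issue by arguing for arbitrary $\alpha$ and $W$ throughout.

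The serious gap is in part (b). Your lower bound $l(\rho)\ge 3$ does not follow from $l(\star_{\varphi}\varphi)\le l(\Omega)+l(\rho)$, which only gives $l(\rho)\ge 2$; the correct argument is simply that $\star_{\varphi}\varphi=\rho\wedge\alpha+\Omega$ is one of the decompositions over which $m(\star_{\varphi}\varphi)$ is minimized, so $l(\rho)\ge m(\star_{\varphi}\varphi)=3$ by Proposition \ref{pro:inv} -- the ``small separate argument'' you defer is exactly this and is never carried out. Worse, your determination of $(m(\rho),r(\rho))=(2,2)$ exhibits only one peeling, which yields the upper bounds $m(\rho)\le 2$ and $r(\rho)\le 2$; the lower bounds require excluding $l(X_1)\le 1$ and $l(X_2)\le 1$ for \emph{every} $v\in[\rho]\setminus\{0\}$ and every complement, a continuous family, not ``finitely many cases''. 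The paper closes this by quoting Westwick's orbit classification (a rank-six trivector of length three has $(m,r)\in\{(2,2),(1,1)\}$) and then excluding $(1,1)$: if some peeling of $\rho$ along $\beta$ had decomposable $\L^3$-part $\hat\rho$, regrouping $\star_{\varphi}\varphi=(\tilde\Omega+\hat\rho\wedge\alpha)+(\tilde\rho-\omega\wedge\alpha)\wedge\beta$ would produce a four-form part of length at most two, contradicting $r(\star_{\varphi}\varphi)=3$. Some argument of this kind -- or a direct identification of your explicit $\rho$ with its class in Westwick's table together with the listed invariants -- is indispensable; without it the claim $(m(\rho),r(\rho))=(2,2)$ is unsubstantiated.
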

\begin{proof}
\begin{enumerate}
\item
Since four-forms in five dimensions are of length at most one, $\rho(\Omega)=6$. Let $\beta\in W$, $\beta\neq 0$ and $U$ be a complement of $\spa{\beta}$ in $W$. Let $\tilde{\Omega}\in \L^4 U$, $\tilde{\rho}\in \L^3 U$ be such that $\Omega=\tilde{\Omega}+\tilde{\rho}\wedge \beta$. Again, since four-forms in five-dimensions are of length at most one, $l(\tilde{\Omega})\leq 1$. Moreover, since three-forms in five dimensions are of length at most two by Lemma \ref{le:length(n-2)forms}, $l(\tilde{\rho})\leq 2$. But $l(\Omega)=3$ forces $l(\tilde{\Omega})=1$ and $l(\tilde{\rho})=2$. Thus $r(\Omega)=1$ and $m(\Omega)=2$ as claimed.
\item
Again, since three-forms in five dimensions are of length at most two, $\rho(\rho)=6$. By \cite{W2}, then either $(m(\rho),r(\rho))=(2,2)$ or $(m(\rho),r(\rho))=(1,1)$. Suppose that the second holds. Then there exists $\beta\in W$, $\beta\neq 0$ and a complement $U$ of $\spa{\beta}$ in $W$ such that for the unique three-form $\hat{\rho}\in \L^3 U$ and the unique two-form $\omega\in \L^2 U$ with $\rho=\hat{\rho}+\omega\wedge \beta$ we get
$l(\hat{\rho})=1$. If $\tilde{\Omega}\in \L^4 U$ and $\tilde{\rho}\in \L^3 U$ denote the unique forms such that $\Omega=\tilde{\Omega}+\tilde{\rho}\wedge \beta$, then $l(\tilde{\Omega})\leq 1$ ($U$ is five-dimensional) and the equality
\begin{equation*}
\star_{\varphi}\varphi=\Omega+\rho\wedge \alpha=\tilde{\Omega}+\tilde{\rho}\wedge \beta+\hat{\rho}\wedge \alpha+\omega\wedge \beta\wedge \alpha= (\tilde{\Omega}+\hat{\rho}\wedge \alpha)+(\tilde{\rho}-\omega\wedge \alpha)\wedge \beta
\end{equation*}
is true. Since the length of $\tilde{\Omega}+\hat{\rho}\wedge \alpha$ is at most two, we have a contradiction to $r(\star_{\varphi} \varphi)=3$ (consider the decomposition $V^*=(U\oplus \spa{\alpha})\oplus \spa{\beta}$). Thus $(m(\rho),r(\rho))=(2,2)$ as claimed.
\end{enumerate}
\end{proof}
\begin{remark}
Lemma \ref{le:Jrho} implies obstructions to the existence of cocalibrated $\G_2$-structures on an arbitrary seven-dimensional real Lie algebra. Therefore, let $W\subseteq V^*$, $\alpha\in V^*$,  $\rho\in  \L^3 W$ and $\Omega\in \L^4 W$ be as in Lemma \ref{le:Jrho}.
\begin{itemize}
\item
Let $\delta:\L^4 W\rightarrow \L^2 W^*$ be an arbitrary dual isomorphism. Then Lemma \ref{le:Jrho} (a) and Lemma \ref{le:length2forms} (a) imply that $\delta(\Omega)^3\neq 0$. This can easily be computed by a computer algebra system.
\item
Lemma \ref{le:Jrho} (b) shows that $\rho\in \L^3 W\cong \L^3 W^{**}$ is that kind of three-form on the six-dimensional vector space $W^*$ which induces a complex structure $J_{\rho}$ on $W^*$ as e.g. explained in \cite{CLSS}. Note that there is a quartic invariant $\lambda: \L^3 W\rightarrow (\L^6 W)^{\otimes 2}$ on the six-dimensional vector space $W^*$ which is negative exactly on those three-forms which induce a complex structure on $W^*$. The value of that invariant can also easily be computed by a computer algebra system.
\end{itemize}
\end{remark}
The case of a Hodge dual of a $\G_2^*$- or $(\G_2)_{\bC}$-structure is more complicated. In this paper we will only need
\begin{lemma}\label{le:lengthofOmegaG2star}
\begin{enumerate}
\item
Let $V$ be a seven-dimensional real vector space, let $\Psi$ be the Hodge dual of a $\G_2^*$-structure, let $0\neq \alpha\in V^*$ and $U$ be a complement of $\spa{\alpha}$ in $V^*$. If $\Omega\in \L^4 U$, $\rho\in \L^3 U$ are the unique forms such that $\Psi=\Omega+\rho\wedge \alpha$, then $l(\Omega)=2$ if and only if $g(\alpha,\alpha)=0$ in the induced metric $g$.  
\item
Let $V$ be a seven-dimensional complex vector space, let $\Psi$ be the Hodge dual of a $(\G_2)_{\bC}$-structure, let $0\neq \alpha\in V^*$ and $U$ be a complement of $\spa{\alpha}$ in $V^*$. If $\Omega\in \L^4 U$, $\rho\in \L^3 U$ are the unique forms such that $\Psi=\Omega+\rho\wedge \alpha$, then $l(\Omega)=2$ if and only if $g(\alpha,\alpha)=0$ in the induced non-degenerated complex symmetric bilinear form $g$.  
\end{enumerate}
\end{lemma}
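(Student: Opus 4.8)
The plan is to establish part (b) by reducing to a couple of explicit representatives, and then to obtain part (a) from part (b) by complexification.

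First I would note that $\Omega$ is intrinsic: under the restriction isomorphism $U\xrightarrow{\ \sim\ }(\ker\alpha)^*$ the four-form $\Omega$ is exactly the pullback $i_H^*\Psi$ of $\Psi$ along the inclusion $i_H\colon H\hookrightarrow V$ (resp.\ $i_H\colon H\hookrightarrow W$) of the hyperplane $H:=\ker\alpha$, so $l(\Omega)$ depends only on $\Psi$ and on $H$, and not on the chosen complement $U$. Since $H^\perp=\spa{\alpha^\sharp}$, the induced bilinear form on $H$ is degenerate precisely when $\alpha^\sharp\in H$, that is, precisely when $g(\alpha,\alpha)=0$ (resp.\ $g_{\bC}(\alpha,\alpha)=0$). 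I would then pin down the range of $l(\Omega)$: as $\Omega$ is a four-form on the six-dimensional space $H^*$, Lemma \ref{le:length(n-2)forms} gives $l(\Omega)\le 3$; and since $\rho(\Psi)=7$, hence $[\Psi]=V^*$ (resp.\ $W^*$), the decomposition $\Psi=\rho\wedge\alpha+\Omega$ exhibits $\Omega$ as an element of $E(\Psi)$, whence $l(\Omega)\ge r(\Psi)=2$ by Proposition \ref{pro:inv}. Thus $l(\Omega)\in\{2,3\}$, and the assertion is exactly that the value $2$ occurs only in the degenerate case.

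For part (b) I would use that $l(\Omega)$ depends only on the pair $(\Psi,\spa\alpha)$, that the stabilizer of $(\varphi_{\bC},\vol_{\bC})$ is $(\G_2)_{\bC}$ acting on $W$ through $g_{\bC}$, and that $(\G_2)_{\bC}$ acts transitively on the null lines of $W^*$ and on the non-null lines of $W^*$; hence it suffices to compute $l(\Omega)$ on one representative of each class. Using an adapted basis and formula \eqref{eq:Hodgedualcomplex}: for $\alpha=E^7$ (non-null) one gets $\Omega=i_H^*\Psi=E^{1234}+E^{1256}+E^{3456}$, whose dual two-form is $E_{12}+E_{34}+E_{56}$ up to signs, with non-vanishing third power, so $l(\Omega)=3$ by Lemma \ref{le:length2forms}(a); and for the null covector $\alpha=E^4+E^5$, after a linear change of basis turning $\alpha$ into a coordinate covector one finds $\Omega$ whose dual two-form has the shape $a\wedge b+c\wedge d$ with $a,b,c,d$ linearly independent, so $l(\Omega)=2$ by Lemma \ref{le:length2forms}(b). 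This proves (b). One could equally prove (a) directly in the same way, computing with $\alpha=f^7$, $f^1$, $f^4+f^5$ and formula \eqref{eq:Hodgedual}, the only difference being that over $\bR$ there are three orbit-classes of lines — null, positive-norm, negative-norm — in the last two of which one obtains $l(\Omega)=3$. Alternatively, (a) follows from (b) by complexification: a $\G_2^*$-structure $\varphi$ on $V$ complexifies to a $(\G_2)_{\bC}$-structure on $W=V_{\bC}$ whose Hodge dual and induced bilinear form are the complex-linear extensions of $\star_\varphi\varphi$ and $g$ (Remark \ref{re:Hodgedual}, Lemma \ref{le:metric}(c)); for $0\neq\alpha\in V^*$ the corresponding four-form over $\bC$ is $\Omega_{\bC}$, one has $g(\alpha,\alpha)=g_{\bC}(\alpha,\alpha)$, and $l(\Omega)=l(\Omega_{\bC})$, because by Lemma \ref{le:length2forms}(a) the length of $\Omega$ equals the largest $j$ with the $j$-th power of its dual two-form nonzero, a condition insensitive to the extension $\bR\subseteq\bC$ (the dual isomorphism commutes with complexification).

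The one step that is not a routine computation is the transitivity of $(\G_2)_{\bC}$ on the null and the non-null lines of $W^*$ (resp.\ of $\G_2^*$ on the null, positive-norm and negative-norm lines of $V^*$) used in the reduction. This is standard: the stabilizer of a non-null vector is $\SL(3,\bC)$ (resp.\ $\SU(2,1)$ or $\SL(3,\bR)$) and the stabilizer of a null line is a maximal parabolic, so all these orbits have the expected dimension and the actions are transitive; alternatively one may quote it from the structure theory of $\G_2$. Granting this, and carrying out the two (or three) adapted-basis computations together with the identification $\Omega=i_H^*\Psi$, the proof is immediate.
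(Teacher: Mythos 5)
Your route is genuinely different from the paper's and is workable in outline: the reduction of $l(\Omega)$ to an invariant of the pair $(\Psi,\spa{\alpha})$, the bounds $2\le l(\Omega)\le 3$ obtained from $r(\Psi)=2$ and Lemma \ref{le:length(n-2)forms}, and the deduction of (a) from (b) by complexification are all correct. The paper avoids the orbit analysis entirely: writing $Z=\delta(\Psi)$, $\delta(\Omega)=v\wedge Y_1$ with $\alpha(v)=1$ and $Y_1\in\L^2\Ann{\alpha}$, it computes in one line that $6\,g(\alpha,\alpha)X=v\wedge Y_1^3$, so that $g(\alpha,\alpha)=0\iff Y_1^3=0\iff l(\Omega)=l(Y_1)\le 2$, which gives both implications simultaneously with no case distinction, no transitivity input, and no choice of representatives. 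Your approach buys a concrete picture of the two (or three) orbit types at the price of importing the transitivity of $\G_2^*$ (resp.\ $(\G_2)_{\bC}$) on null and non-null lines, which is true and citable but lies outside the toolkit the paper otherwise uses.

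Two concrete problems remain in your write-up. First, $E^4+E^5$ is \emph{not} null for the bilinear form induced by a $(\G_2)_{\bC}$-structure: in an adapted basis $g_{\bC}(E^j,E^k)=\delta_{jk}$, so $g_{\bC}(E^4+E^5,E^4+E^5)=2$. You have transplanted the signature of the $\G_2^*$-metric (where $f^4+f^5$ is indeed null, since $g(f^4,f^4)=-1$, $g(f^5,f^5)=1$) into the complex adapted basis; a correct null representative for (b) is $E^4+\mathrm{i}E^5$. Second, the null-orbit computation is the only place where the implication $g(\alpha,\alpha)=0\Rightarrow l(\Omega)=2$ is actually established, and you assert its outcome (``one finds $\Omega$ whose dual two-form has the shape $a\wedge b+c\wedge d$'') without performing it. The computation does come out as claimed -- e.g.\ for $\alpha=f^4+f^5$ one finds $\Omega=\tfrac12(f^4-f^5)\wedge(f^6-f^3)\wedge(f^1+f^7)\wedge f^2+f^{1367}$, a sum of two decomposables, whence $l(\Omega)=2$ by the lower bound -- but as it stands this decisive step, together with the unproved transitivity statement, is exactly where the content of the lemma sits, so it must be written out (or replaced by the paper's identity $6\,g(\alpha,\alpha)X=v\wedge Y_1^3$) before the proof is complete.
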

\begin{proof}
\begin{enumerate}
\item
Let $vol\in \L^7 V^*$ be the associated volume form and let $0\neq X\in \L^7 V$ be such that $vol(X)=1$. Set $Z:=\delta(\Psi)=\Psi\hook X$ with the dual isomorphism $\delta:\L^4 V^*\rightarrow \L^3 V$, $\delta(\Phi):=\Phi\hook X$. A short computation in an adapted basis and the corresponding dual basis shows
\begin{equation*}
 g(\beta,\gamma) X=\frac{1}{6} (\beta\hook Z)\wedge (\gamma\hook Z)\wedge Z
\end{equation*}
for all $\beta,\gamma\in V^*$. Set $Z_1:=\delta(\Omega)=\Omega\hook X$, $Z_2:=\delta(\rho\wedge \alpha)=(\rho\wedge \alpha)\hook X$. Then $Z=Z_1+Z_2$, $l(\Omega)=l(\delta(\Omega))=l(Z_1)$, $Z_2\in \L^3 \Ann{\alpha}$ and $Z_1=v\wedge Y_1$ with $0\neq v\in \Ann{U}$, $\alpha(v)=1$ and $Y_1\in \L^2\Ann{\alpha}$. Hence $l(\Omega)=l(Z_1)=l(Y_1)$ and $\alpha\hook Z=\alpha\hook Z_1+\alpha\hook Z_2=Y_1$. This implies the equality
\begin{equation*}
6 g(\alpha,\alpha)X= (\alpha\hook Z_1)\wedge (\alpha\hook Z_1)\wedge (Z_1+Z_2)=Y_1^2\wedge (v\wedge Y_1+Z_2)= v\wedge Y_1^3.
\end{equation*}
But so $g(\alpha,\alpha)=0$ if and only if $Y_1^3=0$, which is by Lemma \ref{le:length2forms} (a) equivalent to $l(\Omega)=l(Y_1)<3$ and by Lemma \ref{pro:inv} equivalent to $l(\Omega)=2$. This proves the statement
\item
Part (b) can be proven in complete analogy to part (a).
\end{enumerate}
\end{proof}
\section{Classification}\label{Class}
In this section we prove Theorem \ref{Th1real} - \ref{Th2complex}. We start in subsection \ref{pro:firstreduction} by showing that the existence problem of a cocalibrated $\G_2$-, $\G_2^*$- or $(\G_2)_{\bC}$-structure on a seven-dimensional $\bF$-Lie algebra $\g$ with codimension one Abelian ideal $\uf$ is equivalent to the existence of certain closed four-forms on $\Ann{\spa{e_7}}$ for $e_7\in \g\backslash \uf$. Lemma \ref{le:derivative} tells us that is useful to determine the Lie algebra of the stabilizer group of such a closed four-form under the natural action of $\GL(\uf)$ on $\L^4 \Ann{\spa{e_7}}\cong \L^4 \uf^*$, which is done afterwards. Altogether, we obtain a rather abstract classification of the Lie algebras in question which admit cocalibrated structures and establish the equivalence of (a)-(c) in Theorem \ref{Th1real}, of (a) and (b) in Theorem \ref{Th1complex} and almost the equivalence of (a) and (b) in Theorem \ref{Th2real} and in Theorem \ref{Th2complex}. To finish the proof of the equivalence of (a) and (b) in Theorem \ref{Th2real} and in Theorem \ref{Th2complex} and to prove the equivalence to the last condition in Theorem \ref{Th1real} - \ref{Th2complex} we use in subsection \ref{secondreduction} well-known results on the structure of the complex Jordan normal forms of $\mathfrak{sp}(2n,\bF)$ to express the existence of cocalibrated structures totally in properties of the complex Jordan normal form of $\ad(e_7)|_{\uf}$. Note that this also makes connection to our classification of seven-dimensional $\bF$-Lie algebras achieved in Proposition \ref{pro:classLA}.

\subsection{First reduction of the problem}\label{firstreduction} 

\begin{proposition}\label{pro:firstreduction}
Let $\g$ be a seven-dimensional $\bF$-Lie algebra with six-dimensional Abelian
ideal $\uf$ and let $e_7\in \g\backslash \uf$.
\begin{enumerate}
\item
If $\bF=\bR$, $\g$ admits a cocalibrated $\G_2$-structure $\varphi\in \L^3 V^*$ if and only if $\g$ admits
a closed four-form $\Omega\in \Lambda^4 \Ann{\spa{e_7}}$ of length $l(\Omega)=3$. This is the case if and only if $\g$ admits a cocalibrated $\G_2^*$-structure such that with respect to the induced pseudo-Euclidean metric the subspace $\uf$ is non-degenerated.
\item
If $\bF=\bR$, $\g$ admits a cocalibrated $\G_2^*$-structure $\varphi\in \L^3 V^*$ if and only if $\g$ admits
a closed four-form $\tilde{\Omega}\in \Lambda^4 \Ann{\spa{e_7}}$ of length $l(\tilde{\Omega})\geq 2$.
\item
Similarly, if $\bF=\bC$, $\g$ admits a cocalibrated $(\G_2)_{\bC}$-structure if and only if there exists a closed four-form $\hat{\Omega}\in \Lambda^4 \Ann{\spa{e_7}}$ of length $l(\hat{\Omega})\geq 2$ and it admits a cocalibrated $(\G_2)_{\bC}$-structure such that $\uf$ is non-degenerated with respect to the induced non-degenerated symmetric bilinear form if and only if a closed four-form $\hat{\Omega}\in \L^4 \Ann{\spa{e_7}}$ with $l(\hat{\Omega})=3$ exists.
\end{enumerate}
\end{proposition}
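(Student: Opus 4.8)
\emph{The plan.} The plan is to prove all three parts simultaneously, building everything on Lemma \ref{le:derivative} and reducing each equivalence to a statement about the irreducible length of the $\L^4\uf^*$-component of a Hodge dual with respect to the canonical splitting $\g^*=\uf^*\oplus\spa{e^7}$, where $e^7\in\Ann{\uf}$ is chosen with $e^7(e_7)=1$ and $\L^k\uf^*$ is identified with $\L^k\Ann{\spa{e_7}}$.

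\emph{Common reduction.} Every $\Psi\in\L^4\g^*$ is uniquely of the form $\Psi=\Theta+\sigma\wedge e^7$ with $\Theta\in\L^4\uf^*$ and $\sigma\in\L^3\uf^*$. By Lemma \ref{le:derivative}(ii) one has $d(\sigma\wedge e^7)=0$ and $d\Theta=e^7\wedge(f.\Theta)$, where $f:=\ad(e_7)|_{\uf}$, so $d\Psi=e^7\wedge(f.\Theta)$; since $\eta\mapsto e^7\wedge\eta$ is injective on $\L^4\uf^*$, $\Psi$ is closed iff $f.\Theta=0$, i.e.\ (Lemma \ref{le:derivative}(iii)) iff $\Theta\in\L^4\uf^*$ is closed, with $\sigma$ arbitrary. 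By Remark \ref{re:Hodgedual}, $\g$ admits a cocalibrated $\G_2$-, $\G_2^*$-, or $(\G_2)_{\bC}$-structure precisely when it admits a closed four-form lying in the corresponding open $\GL(\g)$-orbit, namely the orbit of a four-form as in (\ref{eq:Hodgedual}) with $\epsilon=1$, with $\epsilon=-1$, or as in (\ref{eq:Hodgedualcomplex}). Combining the two observations, each part reduces to the question: for which closed $\Theta\in\L^4\uf^*$ is there a $\sigma\in\L^3\uf^*$ with $\Theta+\sigma\wedge e^7$ in the relevant orbit? Note also that $\uf$ is non-degenerate for the induced form precisely when the line $\Ann{\uf}=\spa{e^7}$ is, i.e.\ when $g(e^7,e^7)\neq 0$.

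\emph{Forward directions.} Let $\Psi$ be the Hodge dual of a cocalibrated structure and $\Psi=\Theta+\sigma\wedge e^7$; then $\Theta$ is closed. Since the rank of $\Psi$ is $7$ (Proposition \ref{pro:inv}), $[\Psi]=\g^*$ and $\uf^*$ is a complement of $\spa{e^7}$ in $[\Psi]$, so $\Theta\in E(\Psi)$ and therefore $l(\Theta)\geq r(\Psi)$, which by Proposition \ref{pro:inv} equals $3$ in the $\G_2$-case and $2$ in the $\G_2^*$- and $(\G_2)_{\bC}$-cases (for $\G_2$ this also follows directly from Lemma \ref{le:Jrho}). On the other hand a dual isomorphism $\L^4\uf^*\to\L^2\uf$ (Lemma \ref{le:dualiso}) together with Lemma \ref{le:length2forms} gives $l(\Theta)\leq 3$. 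Hence $l(\Theta)=3$ in the $\G_2$-case; and if in any case $\uf$ is non-degenerate, then $g(e^7,e^7)\neq 0$, so $l(\Theta)\neq 2$ by Lemma \ref{le:lengthofOmegaG2star} and, with $2\leq l(\Theta)\leq 3$, again $l(\Theta)=3$. This proves the forward implications, including those involving the non-degeneracy conditions.

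\emph{Backward directions and the main obstacle.} Suppose $\g$ carries a closed $\Theta\in\L^4\uf^*$. If $l(\Theta)=3$, then $\Theta$ lies in the unique open $\GL(\uf)$-orbit of four-forms on the six-dimensional space $\uf$ (via a dual isomorphism this is the orbit of a non-degenerate two-form), and since $\epsilon(e^{1256}+e^{3456})+e^{1234}$ also has length $3$ for $\epsilon=\pm1$, there is a basis $e^1,\dots,e^6$ of $\uf^*$ with $\Theta=\epsilon(e^{1256}+e^{3456})+e^{1234}$ for whichever sign $\epsilon$ is wanted. Putting $\sigma:=-e^{246}+e^{235}+e^{145}+e^{136}\in\L^3\uf^*$, the four-form $\Theta+\sigma\wedge e^7$ is as in (\ref{eq:Hodgedual}) (as in (\ref{eq:Hodgedualcomplex}) over $\bC$, taking $\epsilon=1$), it is closed by the common reduction, and $\uf$ is non-degenerate since $l(\Theta)=3\neq2$; this settles the backward implication of (1) and of all non-degeneracy clauses. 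The remaining case, for $\G_2^*$ and $(\G_2)_{\bC}$, is a closed $\Theta$ with $l(\Theta)=2$, and this is the one delicate point. I would fix a $\G_2^*$- (resp.\ $(\G_2)_{\bC}$-) structure on a seven-dimensional space together with a null one-form $\alpha_0\neq0$ for its induced form — such $\alpha_0$ exists because the metric has signature $(3,4)$ (resp.\ the complex bilinear form has isotropic vectors) — and a complement $W_0$ of $\spa{\alpha_0}$; writing its Hodge dual as $\Psi_0=\Omega_0+\sigma_0\wedge\alpha_0$, Lemma \ref{le:lengthofOmegaG2star} gives $l(\Omega_0)=2$. Since $l(\Theta)=l(\Omega_0)=2<3$, Lemma \ref{le:length(n-2)forms}(a) puts $\Theta$ and $\Omega_0$ into the same normal form, yielding a linear isomorphism $\uf^*\to W_0$ carrying $\Theta$ to $\Omega_0$; extending it by $e^7\mapsto\alpha_0$ and transporting $\sigma_0$ back to a $\sigma\in\L^3\uf^*$, the form $\Theta+\sigma\wedge e^7$ lies in the same orbit as $\Psi_0$, is closed, and hence furnishes the required cocalibrated structure. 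The bulk of the argument is concentrated in this $l(\Theta)=2$ step — there is no equally transparent explicit $\sigma$, so one must pass through Lemmata \ref{le:length(n-2)forms} and \ref{le:lengthofOmegaG2star}; everywhere else it suffices to combine $l(\Theta)\geq r(\Psi)$ with the elementary bound $l(\Theta)\leq3$.
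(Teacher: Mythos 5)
Your proof is correct and follows essentially the same route as the paper: decompose $\Psi=\Theta+\sigma\wedge e^7$, use Lemma \ref{le:derivative} to reduce closedness to closedness of $\Theta$, bound $l(\Theta)$ via $r(\Psi)$ from Proposition \ref{pro:inv} and Lemma \ref{le:lengthofOmegaG2star}, and conversely normalize $\Theta$ via Lemma \ref{le:length(n-2)forms} and complete it to a Hodge dual. The only (harmless) deviation is in the $l(\Theta)=2$ case, where you produce the model decomposition by choosing a null covector $\alpha_0$ and invoking Lemma \ref{le:lengthofOmegaG2star}, while the paper gets it directly from $r(\Psi)=2$.
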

\begin{proof}
Firstly, let $\Psi$ be the Hodge dual of a cocalibrated $\G_2^{(*)}$-structure or of a cocalibrated $(\G_2)_{\bC}$-structure. Decompose the $\Psi$ into
\begin{equation*}
\Psi= \Omega_1\wedge e^7+\Omega_2
\end{equation*}
with $\Omega_1\in \L^3 \Ann{\spa{e_7}}$, $\Omega_2\in \L^4 \Ann{\spa{e_7}}$, where $e^7$ is the element
in the annihilator $\Ann{\uf}$ of $\uf$ with $e^7(e_7)=1$. Then $d(\Omega_1\wedge e^7)=0$ by Lemma \ref{le:derivative}. Thus $d\Psi=0$ implies
$d\Omega_2=0$. Proposition \ref{pro:inv} and Lemma \ref{le:Jrho} imply $l(\Omega_2)=3$ if $\Psi$ is the Hodge dual of a $\G_2$-structure
and $l(\Omega_2)\geq 2$ if $\Psi$ is the Hodge dual of a $\G_2^*$- or a $(\G_2)_{\bC}$-structure. Moreover, if $\uf$ is a non-degenerated subspace with respect to the induced pseudo-Euclidean metric in the case of the Hodge dual of a $\G_2^*$-structure or with respect to the induced non-degenerated symmetric complex bilinear form in the case of the Hodge dual of a $(\G_2)_{\bC}$-structure then $l(\Omega_2)=3$ by Lemma \ref{le:lengthofOmegaG2star}. Therefore, note that the subspace $\Ann{\uf}=\spa{e^7}\subseteq\g^*$ in the dual space $\g^*$ is non-degenerated if and only if the subspace $\uf\subseteq \g$ in the space $\g$ is non-degenerated. This proves one direction in (a)-(c).

For the other direction in (a) - (c), first assume that $\Omega\in\L^4 \Ann{\spa{e_7}}$ is a closed four-form of length three. By Lemma \ref{le:length(n-2)forms} (and replacing $\Omega$ by $-\Omega$, if necessary) for arbitrary $\epsilon\in\{-1,1\}$ there exists a basis $e^1,\ldots,e^6$
of $\Ann{\spa{e_7}}$ such that
\begin{equation*}
\Omega=\epsilon (e^{1256}+e^{3456})+e^{1234}.
\end{equation*}
Then
\begin{equation*}
\Psi:=\epsilon(e^{1256}+e^{3456})+e^{1234}-e^{2467}+e^{2357}+e^{1457}+e^{1367}
\end{equation*}
is closed (use again Lemma \ref{le:derivative}) and is, for $\bF=\bR$, the Hodge dual of a $\G_2$-structure if $\epsilon=1$ and of a $\G_2^*$-structure
if $\epsilon=-1$ and, for $\bF=\bC$, the Hodge dual of a $(\G_2)_{\bC}$-structure for $\epsilon \in \{-1,1\}$. Moreover, Lemma \ref{le:lengthofOmegaG2star} tells us that $\uf$ is non-degenerated with respect to the induced pseudo-Euclidean metric in the case of the Hodge dual of a $\G_2^*$-structure and with respect to the induced symmetric non-degenerated bilinear form in the case of the Hodge dual of a $(\G_2)_{\bC}$-structure.

Finally, let $\Omega\in \L^4 \Ann{\spa{e_7}}$ be a closed four-form of length two. By Lemma \ref{le:length(n-2)forms} there exists a basis $e^1,\ldots,e^6$
of $\Ann{\spa{e_7}}$ such that
\begin{equation*}
\Omega=e^{1234}+e^{1256}.
\end{equation*}
Let $\Psi$ be the Hodge dual of an arbitrary $\G_2^*$-structure or $(\G_2)_{\bC}$-structure, respectively. Then $r(\Psi)=2$ by Proposition \ref{pro:inv} in both cases and so there exists $f^7\in \g^*$, $f^7\neq 0$ and a complement $W$ of $\spa{f^7}$ in $\g^*$ such that
\begin{equation*}
\Psi=\tilde{\Omega}_1\wedge f^7+\tilde{\Omega}_2
\end{equation*}
with $\tilde{\Omega}_1\in \L^3 W$, $\tilde{\Omega}_2\in \L^4 W$ and the length of $\tilde{\Omega}_2$ is two.
Hence there exists a basis $f^1,\ldots,f^6$ of $W$ such that
\begin{equation*}
\tilde{\Omega}_2=f^{1234}+f^{1256}.
\end{equation*}
Denote by $F\in \GL(\g^*)$ the linear map with $F(f^i)=e^i$ for $i=1,\ldots,7$. Then $F_* \Psi$
is the Hodge dual of a $\G_2^*$-structure or of a $(\G_2)_{\bC}$-structure with adapted basis $e_1,\ldots,e_7$, respectively. Moreover,  
\begin{equation*}
F_*\Psi=F_*(\tilde{\Omega}_1\wedge f^7)+F_*(\tilde{\Omega}_2)=F_*(\tilde{\Omega}_1)\wedge e^7+ e^{1234}+e^{1256}=F_*(\tilde{\Omega}_1)\wedge e^7+\Omega.
\end{equation*}
Thus $F_*\Psi$ is closed (once again Lemma \ref{le:derivative}) and $\g$ admits a cocalibrated $\G_2^*$-structure or a cocalibrated $(\G_2)_{\bC}$-structure, respectively. This finishes the proof.
\end{proof}
Let us note some interesting consequences of Proposition \ref{pro:firstreduction}:
\begin{corollary}\label{cor:G2impliesG2star}
Let $\g$ be a seven-dimensional real Lie algebra with six-dimensional Abelian
ideal $\uf$. Then:
\begin{enumerate}
\item[(a)]
If $\g$ admits a cocalibrated $\G_2$-structure, then it
also admits a cocalibrated $\G_2^*$-structure.
\item[(b)]
$\g$ admits a cocalibrated $\G_2$-structure if and only if $\g$ admits a cocalibrated $\G_2^*$-structure such that the subspace $\uf$ is non-degenerated with respect to the induced pseudo-Euclidean metric.
\item[(c)]
The complexification $\g_{\bC}$ admits a cocalibrated $(\G_2)_{\bC}$-structure if and only if $\g$ admits a cocalibrated $\G_2^*$-structure. Moreover, $\g_{\bC}$ admits a cocalibrated $(\G_2)_{\bC}$-structure such that the subspace $\uf_{\bC}$ is non-degenerated with respect to the induced non-degenerated symmetric bilinear form if and only if $\g$ admits a cocalibrated $\G_2^*$-structure such that the subspace $\uf$ is non-degenerated with respect to the induced pseudo-Euclidean metric and this is the case if and only if $\g$ admits a cocalibrated $\G_2$-structure.
\end{enumerate}
\end{corollary}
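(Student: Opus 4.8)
The plan is to derive the whole Corollary from Proposition~\ref{pro:firstreduction}, which rephrases the existence of each type of cocalibrated structure as the existence of a closed four-form of a prescribed length in $\L^4\Ann{\spa{e_7}}$ for a fixed $e_7\in\g\backslash\uf$. Granting that, (a) is immediate: a cocalibrated $\G_2$-structure produces, by Proposition~\ref{pro:firstreduction}(1), a closed four-form of length $3$, hence in particular one of length $\geq 2$, and Proposition~\ref{pro:firstreduction}(2) then gives a cocalibrated $\G_2^*$-structure. Likewise (b) is exactly the two-term equivalence already contained in Proposition~\ref{pro:firstreduction}(1). So the real work is (c), where one must compare closed four-forms on $\g$ with closed four-forms on its complexification $\g_{\bC}$.

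To set up (c), fix $e_7\in\g\backslash\uf$ and use Lemma~\ref{le:derivative}: a four-form $\rho\in\L^4\uf^*\cong\L^4\Ann{\spa{e_7}}$ is closed exactly when $f.\rho=0$, where $f=\ad(e_7)|_\uf\in\mathfrak{gl}(\uf)$. Since the bracket of $\g_{\bC}$ is the $\bC$-bilinear extension of that of $\g$, the operator $\ad(e_7)|_{\uf_{\bC}}$ is the complexification $f_{\bC}$ of $f$, and $\Ann{\spa{e_7}}\subseteq\g_{\bC}^*$ is the complexification of $\Ann{\spa{e_7}}\subseteq\g^*$. Consequently the space $\mathcal{Z}_{\bC}$ of closed complex four-forms in $\L^4\Ann{\spa{e_7}}$, i.e.\ those $\rho$ with $f_{\bC}.\rho=0$, is the complexification of the space $\mathcal{Z}_{\bR}$ of closed real four-forms in $\L^4\Ann{\spa{e_7}}$.

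The key observation is that ``length $\geq 2$'' and ``length $=3$'' are, for four-forms on the six-dimensional space $\Ann{\spa{e_7}}$, real-algebraic conditions that are untouched by complexification. Indeed, fixing a dual isomorphism $\delta$ on $\L^4\Ann{\spa{e_7}}$ defined over $\bR$, Lemmas~\ref{le:dualiso} and \ref{le:length2forms}(a) give $l(\rho)=\max\{m:\delta(\rho)^m\neq 0\}$, with $\delta(\rho)^4=0$ automatically; hence $l(\rho)\leq 1\iff\delta(\rho)^2=0$ and $l(\rho)\leq 2\iff\delta(\rho)^3=0$, each a finite system of polynomial equations with real coefficients in the coordinates of $\rho$. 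A polynomial function vanishing identically on a real subspace vanishes on its complexification, so $\mathcal{Z}_{\bR}$ is contained in the locus $\{\delta(\cdot)^2=0\}$ (resp.\ $\{\delta(\cdot)^3=0\}$) if and only if $\mathcal{Z}_{\bC}$ is; equivalently, $\mathcal{Z}_{\bR}$ contains a four-form of length $\geq 2$ (resp.\ of length $3$) if and only if $\mathcal{Z}_{\bC}$ does.

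Assembling these gives (c): Proposition~\ref{pro:firstreduction}(3) applied to $\g_{\bC}$ shows that $\g_{\bC}$ admits a cocalibrated $(\G_2)_{\bC}$-structure iff $\mathcal{Z}_{\bC}$ contains an element of length $\geq 2$ iff $\mathcal{Z}_{\bR}$ does iff, by Proposition~\ref{pro:firstreduction}(2), $\g$ admits a cocalibrated $\G_2^*$-structure; and $\g_{\bC}$ admits such a structure with $\uf_{\bC}$ non-degenerated iff $\mathcal{Z}_{\bC}$ contains an element of length $3$ iff $\mathcal{Z}_{\bR}$ does iff, by Proposition~\ref{pro:firstreduction}(1), $\g$ admits a cocalibrated $\G_2^*$-structure with $\uf$ non-degenerated, which by the same part is equivalent to $\g$ admitting a cocalibrated $\G_2$-structure. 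I expect the only genuine obstacle to be the direction ``a closed \emph{complex} four-form of length $\geq 2$ (resp.\ $3$) forces a closed \emph{real} one of the same kind'': passing to the real or imaginary part fails, since both parts can be decomposable while their combination is not, and the Zariski-density argument against the real locus $\{\delta(\cdot)^2=0\}$ (resp.\ $\{\delta(\cdot)^3=0\}$) is what is needed to get around this.
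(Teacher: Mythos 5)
Your proposal is correct and follows the paper's (implicit) route: the paper states this corollary without proof as a direct consequence of Proposition \ref{pro:firstreduction}, and your parts (a) and (b) are read off from it exactly as you do. For part (c) the paper is silent on the only non-trivial step --- that a closed \emph{complex} four-form of length $\geq 2$ (resp.\ $=3$) in $\L^4\Ann{\spa{e_7}}\subseteq\L^4\g_{\bC}^*$ forces a closed \emph{real} one of the same kind --- and your argument closes it correctly: $\mathcal{Z}_{\bC}$ is the complexification of $\mathcal{Z}_{\bR}$ because closedness is the real-linear condition $f.\rho=0$ of Lemma \ref{le:derivative}, and the length thresholds are the real-algebraic conditions $\delta(\rho)^2=0$ and $\delta(\rho)^3=0$ by Lemmas \ref{le:dualiso} and \ref{le:length2forms}, so a polynomial vanishing on $\mathcal{Z}_{\bR}$ vanishes on $\mathcal{Z}_{\bC}$. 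An alternative that the paper's later structure suggests is to deduce (c) from Theorems \ref{th:abstractclassificationreal} and \ref{th:abstractclassificationcomplex} together with Proposition \ref{pro:JNF}: the real and complex existence criteria become verbatim the same condition on the complex Jordan normal form of $\ad(e_7)|_{\uf}$, which is unchanged under complexification. Your route has the advantage of using only the material up to Proposition \ref{pro:firstreduction}, matching where the corollary is placed in the paper.
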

By Proposition \ref{pro:firstreduction}, the existence of a cocalibrated $\G_2^{(*)}$-structure or a cocalibrated $(\G_2)_{\bC}$-structure is equivalent to the existence of certain closed four-forms in $\L^4 \Ann{\spa{e_7}}$. By Lemma \ref{le:derivative} (c) such a four-form is closed if and only if the linear map $f:=\ad(e_7)|_{\uf}$ is in the Lie algebra of the stabilizer group under the action of $\GL(\uf)$ of the four-form $\Omega$ considered as an element in $\L^4 \uf^*$. Hence, to proceed, we have to determine the stabilizer groups and the associated Lie algebras of four-forms of length two and three on a six-dimensional $\bF$-vector space.
\begin{lemma}\label{le:stabilizers}
Let $V$ be a six-dimensional $\bF$-vector space, $\bF\in\{\bR,\bC\}$, and $\Omega\in \L^4 V^*$ be a four-form.
\begin{enumerate}
\item[(a)]
Let $l(\Omega)=3$. Then the stabilizer group $\GL(V)_{\Omega}$ of $\Omega$ under the natural action of $\GL(V)$ on $\L^4 V^*$ is given by the set of all symplectic and all anti-symplectic transformations of the symplectic vector space $(V,\omega)$, where $\omega\in \L^2 V^*$ is a two-form with $\frac{1}{2}\omega^2=\Omega$. That means 
\begin{equation*}
\GL(V)_{\Omega}=\{f\in \GL(V)| f^*\omega=\epsilon\omega \textrm{ for some } \epsilon\in \{-1,1\} \}.
\end{equation*}
Its Lie algebra is given by the symplectic Lie algebra $\mathfrak{sp}(V,\omega)$.
\item[(b)]
Let $l(\Omega)=2$. Set $V_4:=\{v\in V| \forall\, w\in V:\ \left((w\wedge v)\hook \Omega\right)^2=0 \}$. Then $V_4$ is a four-dimensional subspace. Moreover, there exists a two-dimensional complementary subspace $V_2$ such that $\Omega=\omega_2\wedge \omega_4$ with $\omega_2\in \L^2 \Ann{V_4}$, $\omega_4\in \L^2 \Ann{V_2}$ and such that $\omega_4$ is of length two. The stabilizer group $\GL(V)_{\Omega}$ of $\Omega$ under the natural action of $\GL(V)$ on $\L^4 V^*$ can be described for $\bF=\bR$ by
\begin{equation*}
\begin{split}
\GL(V)_{\Omega}=\{f\in \GL(V)| & f|_{V_2}=f_2+h,\, f_2\in \GL(V_2), h\in \Hom(V_2,V_4),\\
& f|_{V_4}=\frac{f_4}{\sqrt{|\det(f_2)|}},\, f_4\in \GL(V_4),f_4^*\omega_4=\sgn(\det(f_2))\omega_4\}.
\end{split}
\end{equation*}
and for $\bF=\bC$ by
\begin{equation*}
\begin{split}
\GL(V)_{\Omega} =  \{f\in \GL(V)|  & f|_{V_2}=  f_2+h,\, f_2\in \GL(V_2), h\in \Hom(V_2,V_4), \\
& f|_{V_4}=\frac{f_4}{\lambda},\, f_4\in \Sp(V_4,\omega_4),\lambda\in \bC, \lambda^2=\det(f_2) \}.
\end{split}
\end{equation*}
Thereby, we consider $\omega_4$ as a two-form on $\L^2 V_4^*$ by the canonical identification $V_2^0\cong V_4^*$ induced by the decomposition $V=V_2\oplus V_4$.

Its Lie algebra is in both cases given by
\begin{equation*}
\begin{split}
L(\GL(V)_{\Omega})=\{f\in \mathfrak{gl}(V)| & f|_{V_2}=f_2+h,\, f_2\in \mathfrak{gl}(V_2),h\in \Hom(V_2,V_4),\\
& f|_{V_4}=-\frac{\tr(f_2)}{2}\id_{V_4}+f_4,\,f_4\in \mathfrak{sp} (V_4,\omega_4)\}.
\end{split}
\end{equation*}
\end{enumerate}
\end{lemma}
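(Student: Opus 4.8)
The plan is to prove (a) and (b) by analyzing the pullback action of $\GL(V)$ on $\L^4 V^*$ directly, reducing everything to elementary exterior algebra on the six-dimensional space $V$.

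For (a) I would first record that a four-form of length three on $V$ can be written as $\Omega=\tfrac12\omega^2$ for a nondegenerate $\omega\in\L^2 V^*$: by Lemma~\ref{le:length(n-2)forms}(b) we have $\Omega=\pm(e^{1234}+e^{1256}+e^{3456})$ in a suitable basis, after a dual isomorphism both signs correspond to nondegenerate two-vectors and hence lie in one $\GL(V)$-orbit, and $\tfrac12(e^{12}+e^{34}+e^{56})^2$ exhibits one representative. The crux is then the claim that $\omega$ is determined by $\Omega$ up to a sign, i.e. $\eta^2=\omega^2$ forces $\eta=\pm\omega$. To prove it I set $\alpha:=\eta-\omega$, $\beta:=\eta+\omega$, so that $\alpha\wedge\beta=\eta^2-\omega^2=0$ while $\beta-\alpha=2\omega$ is nondegenerate, whence $\ker\alpha\cap\ker\beta=0$. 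A rank analysis finishes this: if $\alpha$ has rank six, then $\xi\mapsto\alpha\wedge\xi$ is injective on $\L^2 V^*$ by the Lefschetz property of the nondegenerate form $\alpha$, so $\alpha\wedge\beta=0$ gives $\beta=0$, i.e. $\eta=-\omega$; if $\alpha$ has rank two or four, a short computation in a normal form for $\alpha$ shows that $\alpha\wedge\beta=0$ forces $\ker\beta\supseteq\ker\alpha$ (rank four) or $\dim$ of $\ker\beta\cap\ker\alpha$ to be at least two (rank two), contradicting $\ker\alpha\cap\ker\beta=0$; so $\alpha=0$ and $\eta=\omega$. Granting the claim, $f^*\Omega=\Omega$ is equivalent to $(f^*\omega)^2=\omega^2$, hence to $f^*\omega=\epsilon\omega$ with $\epsilon\in\{-1,1\}$, which is exactly the asserted description of $\GL(V)_{\Omega}$. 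Since an anti-symplectic map sends the volume form $\tfrac16\omega^3$ to its negative, it has determinant $-1$; thus $\GL(V)_{\Omega}$ has $\Sp(V,\omega)$ as identity component and Lie algebra $\mathfrak{sp}(V,\omega)$.

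For (b) I would start from the normal form $\Omega=e^{1234}+e^{1256}=e^{12}\wedge(e^{34}+e^{56})$ of Lemma~\ref{le:length(n-2)forms}(a) and put $\omega_2:=e^{12}$, $\omega_4:=e^{34}+e^{56}$. Computing $(w\wedge v)\hook\Omega=w\hook(v\hook\Omega)$ on the normal basis, one checks that $\bigl((w\wedge v)\hook\Omega\bigr)^2=0$ for all $w$ precisely when $v\in\spa{e_3,e_4,e_5,e_6}$, so $V_4=\spa{e_3,e_4,e_5,e_6}$ is four-dimensional. As $V_4$ is defined intrinsically from $\Omega$ it is invariant under $\GL(V)_{\Omega}$; this is made precise by the equivariance identity $(f\cdot\xi)\hook\Omega=(f^*)^{-1}(\xi\hook\Omega)$ valid whenever $f^*\Omega=\Omega$ (here $f\cdot\xi$ denotes the induced action on the two-vector $\xi$), which shows the defining condition for $V_4$ is preserved. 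Choosing any complement $V_2$ then gives the factorization $\Omega=\omega_2\wedge\omega_4$ with $\omega_2\in\L^2\Ann{V_4}$ and $\omega_4\in\L^2\Ann{V_2}$ of length two, as claimed. To identify the stabilizer, I note that an $f\in\GL(V)_{\Omega}$ preserves $V_4$, hence has the block form $f|_{V_2}=f_2+h$, $f|_{V_4}=\tilde f_4$ with $f_2\in\GL(V_2)$, $h\in\Hom(V_2,V_4)$, $\tilde f_4\in\GL(V_4)$; since $f^*$ preserves $\Ann{V_4}\cong(V/V_4)^*$ and the $h$-part is invisible on $V/V_4$, one gets $f^*\omega_2=\det(f_2)\,\omega_2$. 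Plugging into $\Omega=f^*\Omega=\det(f_2)\,\omega_2\wedge f^*\omega_4$ and using that $\omega_2\wedge(\cdot)$ on $\L^2 V^*$ depends only on the restriction to $V_4$ (a bidegree count for $V^*=\Ann{V_4}\oplus\Ann{V_2}$), together with $(f^*\omega_4)|_{V_4}=\tilde f_4^*\omega_4$, yields $\tilde f_4^*\omega_4=\det(f_2)^{-1}\omega_4$ on $V_4$, i.e. $\tilde f_4$ is conformally symplectic with factor $\det(f_2)^{-1}$. Over $\bC$ one writes $\tilde f_4=f_4/\lambda$ with $\lambda^2=\det(f_2)$ and $f_4\in\Sp(V_4,\omega_4)$; over $\bR$ one must instead write $\tilde f_4=f_4/\sqrt{|\det(f_2)|}$ and absorb the sign, so $f_4^*\omega_4=\sgn(\det(f_2))\,\omega_4$. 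The converse inclusion is a direct check: any $f$ of the stated form satisfies $f^*\Omega=f^*\omega_2\wedge f^*\omega_4=\det(f_2)\cdot\det(f_2)^{-1}\,\omega_2\wedge\omega_4=\Omega$, the $h$-part again not contributing because $\omega_2\wedge(\cdot)$ only sees $V_4$.

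For the Lie algebra in (b) I would compute $L(\GL(V)_{\Omega})=\{X\in\mathfrak{gl}(V)\mid X\ccdot\Omega=0\}$ directly, which is cleaner and uniform in $\bF$ since it avoids the $\sqrt{\ }$ versus $\sgn$ distinction. Such $X$ preserves $V_4$ (as $\exp(tX)$ does), so $X|_{V_2}=X_2+h$, $X|_{V_4}=X_4'$; the derived action on $\omega_2$ is $X\ccdot\omega_2=-\tr(X_2)\,\omega_2$, so $0=X\ccdot\Omega=\omega_2\wedge\bigl(X\ccdot\omega_4-\tr(X_2)\,\omega_4\bigr)$ forces $X_4'\ccdot\omega_4=\tr(X_2)\,\omega_4$ on $V_4$. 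Writing $X_4'=-\tfrac{\tr(X_2)}{2}\id_{V_4}+X_4$, the central part accounts for $-2\cdot(-\tfrac{\tr(X_2)}{2})=\tr(X_2)$ and $X_4\ccdot\omega_4=0$ means exactly $X_4\in\mathfrak{sp}(V_4,\omega_4)$; running this backwards gives the converse, so $L(\GL(V)_{\Omega})$ is as asserted. I expect the main obstacle to be the uniqueness-up-to-sign claim in (a) — the rank analysis of $\alpha,\beta$ with $\alpha\wedge\beta=0$ and $\beta-\alpha$ nondegenerate; the remaining difficulty is bookkeeping the determinant normalizations and the genuine real/complex split in the group stabilizer of (b), while the facts that $V_4$ is four-dimensional and that $\omega_2\wedge(\cdot)$ only depends on $V_4$ are short computations in the normal basis.
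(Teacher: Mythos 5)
Your argument is correct in substance but takes a genuinely different route from the paper in both parts. In (a) the paper does not prove uniqueness of the symplectic square root; instead it transports the $\GL(V)$-action on $\L^4V^*$ to an action on $\L^2V^*$ via the isomorphisms induced by $\omega$ and $\omega^3/6$ (so that $f$ acts by $\det(f)^{-1}(f^t)^*$ with $f^t$ the symplectic transpose) and then runs a determinant normalization. Your replacement — that $\eta^2=\omega^2$ with $\omega$ nondegenerate forces $\eta=\pm\omega$, proved by the rank analysis of $\alpha=\eta-\omega$, $\beta=\eta+\omega$ with $\alpha\wedge\beta=0$ and $\beta-\alpha$ nondegenerate — is sound (I checked the rank $6$, $4$, $2$ cases) and arguably cleaner, and it makes the identification of the stabilizer immediate. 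In (b) your derivation of the group is essentially the paper's (invariance of $V_4$, then $\tilde f_4^*\omega_4=\det(f_2)^{-1}\omega_4$ via the observation that $\omega_2\wedge(\cdot)$ only sees the $\Ann{V_2}$-component), but your computation of the Lie algebra as the infinitesimal stabilizer $\{X\mid X\ccdot\Omega=0\}$ is different from and cleaner than the paper's, which identifies the identity component of the group first; your route is uniform in $\bF$ and avoids the $\sqrt{\ }$-versus-$\sgn$ bookkeeping, at the small cost of having to know that the infinitesimal stabilizer is the Lie algebra of the stabilizer (which is Lemma \ref{le:derivative}(iii) in disguise).

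One claim in your (a) is genuinely false over $\bR$, although it is harmless and is in fact shared with the paper: the two forms $\pm(e^{1234}+e^{1256}+e^{3456})$ do \emph{not} lie in one $\GL(6,\bR)$-orbit, and consequently not every real four-form of length three is of the form $\frac12\omega^2$. Indeed, your own uniqueness lemma shows this: if $h^*\Omega_0=-\Omega_0$ with $\Omega_0=\frac12\omega_0^2$, then $(h^*\omega_0)^2=-\omega_0^2=(\i\omega_0)^2$, so $h^*\omega_0=\pm \i\omega_0$ is not real. (The dual isomorphism is only equivariant up to the determinant character, so a single orbit of nondegenerate two-vectors need not map to a single orbit of four-forms; the sign of the Pfaffian of the dual bivector is the separating invariant.) This does not damage the lemma or its use: the stabilizer of $-\Omega$ equals that of $\Omega$, so the conclusion about $\GL(V)_{\Omega}$ and its Lie algebra holds for every length-three form, and the forms $e^{1234}+\epsilon(e^{1256}+e^{3456})$ actually arising in Proposition \ref{pro:firstreduction} are genuine half-squares for both $\epsilon=\pm1$ (take $\omega=e^{12}+e^{34}-e^{56}$ for $\epsilon=-1$). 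You should simply restrict the existence claim to the orbit containing $\frac12(e^{12}+e^{34}+e^{56})^2$, or note that otherwise one replaces $\Omega$ by $-\Omega$ without changing the stabilizer.
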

\begin{proof}
\begin{enumerate}
\item[(a)]
Let $\Omega\in \L^4 V^*$ be of length $3$. By Lemma \ref{le:length2forms} (a) and Lemma  \ref{le:length(n-2)forms} there exists a (linear) symplectic two-form $\omega\in \L^2 V^*$, i.e. a two-form of length three, such that $\Omega=\frac{1}{2}\omega^2$. The two-form $\omega$ provides an isomorphism $V^*\rightarrow V$ and so an isomorphism $\L^2 V^*\rightarrow \L^2 V$. Moreover, $0\neq \frac{\omega^3}{6}\in \L^6 V^*$ provides an isomorphism $\L^2 V\rightarrow \L^4 V^*$ given by $X\mapsto X\hook \frac{\omega^3}{6}$. The natural representation of $\GL(V)$ on $\L^4 V^*$ given by $f.\Psi=(f^{-1})^*\Psi$ gives us a representation on $\L^2 V^*$ using the mentioned isomorphisms. $\GL(V)\ni f$ acts then on $\L^2 V^*$ by $f.\phi:= \det(f)^{-1} (f^t)^*\phi$ for all $\phi\in \L^2 V^*$. Thereby, $f^t$ is the symplectic transpose of $f$, i.e. $f^t$ is defined by $\omega(f(v),w)=\omega(v,f^t(w))$ for all $v,w\in V$. Moreover, the image of $\Omega\in \L^4 V^*$ under all these isomorphisms is exactly $\omega$ as one easily checks in a symplectic basis and its dual basis.

If $\bF=\bR$ we therefore have $f\in \GL(V)_{\Omega}$, i.e. $f.\Omega=\Omega$, if and only $\left(\frac{1}{\sqrt{|\det(f)|}} f^{t}\right)^* \omega=\pm \omega$, where the plus sign appears if $\det(f)>0$ and the minus sign if $\det(f)<0$. Hence $\frac{1}{\sqrt{|\det(f)|}} f^{t}$ is a symplectic transformation of $(V,\omega)$ if $\det(f)>0$ and an anti-symplectic transformation if $\det(f)<0$. Since the determinant of a symplectic transformation is one and the determinant of an anti-symplectic transformation in a six-dimensional space is $-1$, we get $\pm 1=\det\left(\frac{1}{\sqrt{|\det(f)|}} f^{t}\right)=\frac{\det(f)}{|\det(f)|^3}$ and so $\sqrt{|\det(f)|}=1$ in both cases. Thus $f^t$ and so also $f$ is a symplectic transformation or an anti-symplectic transformation. Conversely, due to $\Omega=\frac{1}{2}\omega^2$, it is clear that all symplectic and all anti-symplectic transformations of $(V,\omega)$ stabilize $\Omega$.

If $\bF=\bC$, we get $f\in \GL(V)_{\Omega}$ if and only if $\left(\lambda f^{t}\right)^* \omega=\omega$ for $\lambda\in \bC$ with $\lambda^2=\frac{1}{\det(f)}$. Thus $\lambda f^{t}$ is a symplectic transformation. Hence $1=\det(\lambda f^{t})=\frac{1}{\det(f)^2}$ and so $(f^t)^*\omega=\pm \omega$, i.e. $f^{t}$ and so $f$ is a symplectic or an anti-symplectic transformation of $(V,\omega)$. Note again that it is clear that all symplectic and all anti-symplectic transformations stabilize $\Omega$.

The identity component of the stabilizer group is in both cases exactly the set of all symplectic transformations. Hence the associated Lie algebra is in both cases the symplectic Lie algebra $\mathfrak{sp}(V,\omega)$.
\item[(b)]
Let $\Omega\in \L^4 V^*$ be of length $2$. By Lemma \ref{le:length(n-2)forms} there exists a basis $e_1,\ldots,e_6$ of $V$ such that in the dual basis $\Omega=e^{1234}+e^{1256}$. Then $\Omega=\omega_2\wedge \omega_4$ with $\omega_2=e^{12}$, $\omega_4=e^{34}+e^{56}$ and $\omega_4$ is of length two. Setting $V_2:=\spa{e_1,e_2}$, $V_4:=\spa{e_3,e_4,e_5,e_6}$ we see that $\omega_2\in \L^2 \Ann{V_4}$, $\omega_4\in \L^2 \Ann{V_2}$.

Let $v=\sum_{i=1}^6 \alpha_i e_i$. A short computation shows that $((v\wedge e_1)\hook \Omega)^2=0$ implies $\alpha_2=0$ and that $((v\wedge e_2)\hook \Omega)^2=0$ implies $\alpha_1=0$. Hence $V_4\supseteq \{v\in V| \forall\, w\in V:\, ((v\wedge w)\hook \Omega)^2=0\}$. Another computation shows that for $v\in V_4$ we have in fact $((v\wedge w)\hook \Omega)^2=0$ for any $w\in V$. Thus the identity $V_4=\{v\in V| \forall\, w\in V:\, ((v\wedge w)\hook \Omega)^2=0\}$ is true.

Let $f\in \GL(V)$ such that $f.\Omega=(f^{-1})^*\Omega=\Omega$. Of course, then also $f^*\Omega=\Omega$. This allows us to show that $V_4$ is an invariant subspace for $f$. Therefore, let $v\in V_4$, $w\in V$. We have
\begin{equation*}
\begin{split}
0&=((v\wedge f^{-1}(w))\hook \Omega) \wedge ((v\wedge f^{-1}(w))\hook \Omega)=(f^{-1})^*((v\wedge f^{-1}(w))\hook \Omega) \wedge (f^{-1})^*((v\wedge f^{-1}(w))\hook \Omega).
\end{split}
\end{equation*}
Now $f^*\Omega=\Omega$ implies
\begin{equation*}
\begin{split}
((f^{-1})^*((v\wedge f^{-1}(w))\hook \Omega)(w_2,w_3) & =\Omega(f^{-1}(w),v,f^{-1}(w_2),f^{-1}(w_3))\\
&=\Omega(w,f(v),w_2,w_3)=((f(v)\wedge w)\hook \Omega)(w_2,w_3)
\end{split}
\end{equation*}
for all $w_2,w_3\in V$ and so $(f^{-1})^*((v\wedge f^{-1}(w))\hook \Omega)=(f(v)\wedge w)\hook \Omega$. Thus also $((f(v)\wedge w)\hook \Omega)^2=0$.
Hence $f(v)\in V_4$ and $V_4$ is an invariant subspace for $f$. Set $g_4:=f|_{V_4}\in \GL(V_4)$ and define $f_2\in \GL(V_2)$ and $h\in \Hom(V_2,V_4)$ by the equation $f|_{V_2}=f_2+h$. Then $f^{-1}|_{V_4}=g_4^{-1}$ and $f^{-1}|_{V_2}=f_2^{-1}-g_4^{-1}\circ h\circ f_2^{-1}$. Let $v_1,v_2\in V_2$ be such that $\omega_2(v_1,v_2)=1$. For $v_3,v_4\in V_4$ we get
\begin{equation*}
\begin{split}
(g_4^*\omega_4)(v_3,v_4)&=\omega_2(v_1,v_2)\cdot \omega_4(g_4(v_3),g_4(v_4))=\Omega(v_1,v_2,g_4(v_3),g_4(v_4))=\Omega(v_1,v_2,f(v_3),f(v_4))\\
&=( (f^{-1})^*\Omega)(v_1,v_2,f(v_3),f(v_4))=\Omega (f^{-1}(v_1),f^{-1}(v_2),v_3,v_4)\\
&=\omega_2(f^{-1}(v_1),f^{-1}(v_2))\cdot \omega_4(v_3,v_4)=\omega_2( f_2^{-1}(v_1),f_2^{-1}(v_2))\cdot \omega_4(v_3,v_4)\\
&=\frac{1}{\det(f_2)} \omega_4(v_3,v_4).
\end{split}
\end{equation*}
Hence $\det(f_2) g_4^*\omega_4=\omega_4$.

Let $\bF=\bR$. If $\det(f_2)>0$ we have $f_4:=\sqrt{\det(f_2)} g_4\in \Sp(V_4,\omega_4)$ whereas for $\det(f_2)<0$ we get that $f_4:=\sqrt{|\det(f_2)|}g_4$ is an anti-symplectic transformation of $(V_4,\omega_4)$. This shows that the stabilizer group $\GL(V)_{\Omega}$ is contained in 
\begin{equation*}
\begin{split}
G:=\{f\in \GL(V)| & f|_{V_2}=f_2+h,\, f_2\in \GL(V_2), h\in \Hom(V_2,V_4),\\
& f|_{V_4}=\frac{f_4}{\sqrt{|\det(f_2)|}},\, f_4\in \GL(V_4),f_4^*\omega_4=\sgn(\det(f_2))\omega_4\}.
\end{split}
\end{equation*}
A direct calculation shows that each element of $G$ stabilizes $\Omega$. Thus $G=\GL(V)_{\Omega}$ as claimed. The elements of $\GL(V)_{\Omega}$ which lie in the identity component $(\GL(V)_{\Omega})_0$ are characterized by the properties $\det(f_2)>0$ and $f_4\in \Sp(V_4,\omega_4)$. Hence it follows that the associated Lie algebra $L(\GL(V)_{\Omega})$ has the claimed form.

Finally, consider $\bF=\bC$. Then $\det(f_2) g_4^*\omega_4=\omega_4$ states that $f_4:=\lambda g_4\in \Sp(V_4,\omega_4)$ for all $\lambda\in \bC$ with $\lambda^2=\det(f_2)$. This shows that $\GL(V)_{\Omega}$ is contained in
\begin{equation*}
\begin{split}
H:=\{f\in \GL(V)| & f|_{V_2}=f_2+h,\, f_2\in \GL(V_2), h\in \Hom(V_2,V_4),\\
& f|_{V_4}=\frac{f_4}{\lambda},\, f_4\in \Sp(V_4,\omega_4),\lambda\in \bC, \lambda^2=\det(f_2) \}.
\end{split}
\end{equation*}
Conversely, a simple calculation shows that each element of $H$ stabilizes $\Omega$. Thus $\GL(V)_{\Omega}=H$ as claimed. Similarly to the real case we see that $L(\GL(V)_{\Omega})$ has the stated form.
\end{enumerate}
\end{proof}
Lemma \ref{le:stabilizers} and Proposition \ref{pro:firstreduction} imply the following two theorems:
\begin{theorem}\label{th:abstractclassificationreal}
Let $\g$ be a seven-dimensional real Lie algebra with codimension one Abelian ideal $\uf$. Let $e_7\in \g\backslash \uf$, $\omega_6\in \L^2 \uf^*$ be a non-degenerated two-form on $\uf$, $V_4$ be a four-dimensional subspace of $\uf$, $V_2$ be a complementary two-dimensional subspace of $V_4$ in $\uf$ and $\omega_4\in \L^2 V_4^*$ be a non-degenerated two-form on $V_4$. Then:
\begin{enumerate}
\item
$\g$ admits a cocalibrated $\G_2$-structure if and only if $\g$ admits a cocalibrated $\G_2^*$-structure such that $\uf$ is a non-degenerated subspace of $\g$ with respect to the induced pseudo-Euclidean metric on $\g$ and this is the case if and only if $\ad(e_7)|_{\uf}\in \mathfrak{gl}(\uf)$ is similar under the action of $\GL(\uf)$ to an element in $\mathfrak{sp}(\uf,\omega_6)$.
\item
$\g$ admits a cocalibrated $\G_2^*$-structure if and only if $\ad(e_7)|_{\uf}\in \mathfrak{gl}(\uf)$ is similar under the action of $\GL(\uf)$ to an element in $\mathfrak{sp}(\uf,\omega_6)$ or to an element in
\begin{equation*}
\{f\in \mathfrak{gl}(V)| f|_{V_2}=f_2+h,\, f_2\in \mathfrak{gl}(V_2),h\in \Hom(V_2,V_4),\,\, f|_{V_4}=-\frac{\tr(f_2)}{2}\id_{V_4}+f_4,\,f_4\in \mathfrak{sp} (V_4,\omega_4)\}.
\end{equation*}
\end{enumerate}
\end{theorem}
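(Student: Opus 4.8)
The plan is to deduce both statements directly from Proposition~\ref{pro:firstreduction}, Lemma~\ref{le:derivative}~(iii) and Lemma~\ref{le:stabilizers}; the only new ingredient is the observation that ``existence of a closed four-form of a prescribed length in $\L^4\Ann{\spa{e_7}}$'' is the same as a conjugacy condition on $f:=\ad(e_7)|_{\uf}$.

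First I would use the decomposition $\g=\uf\oplus\spa{e_7}$ to identify $\L^4\Ann{\spa{e_7}}$ with $\L^4\uf^*$, so that by Lemma~\ref{le:derivative}~(iii) a four-form $\Omega\in\L^4\uf^*$ is closed in $\L^*\g^*$ if and only if $f\in L(\GL(\uf)_{\Omega})$. Next, Lemma~\ref{le:stabilizers} shows in particular that the four-forms of length $3$ on the six-dimensional space $\uf$ all lie in a single $\GL(\uf)$-orbit, and likewise those of length $2$ (for length $3$ this uses that the two admissible overall signs of the normal form of Lemma~\ref{le:length(n-2)forms}~(b) are interchanged by a transformation of determinant $-1$). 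Combining this with the elementary identity $\GL(\uf)_{g\cdot\Omega}=g\,\GL(\uf)_{\Omega}\,g^{-1}$, hence $L(\GL(\uf)_{g\cdot\Omega})=\mathrm{Ad}(g)\big(L(\GL(\uf)_{\Omega})\big)$, I would obtain: for $l\in\{2,3\}$, a closed four-form of length $l$ exists in $\L^4\Ann{\spa{e_7}}$ if and only if $f$ is $\GL(\uf)$-conjugate to an element of $L(\GL(\uf)_{\Omega_l})$ for one fixed four-form $\Omega_l$ of length $l$.

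Then I would plug in the explicit Lie algebras from Lemma~\ref{le:stabilizers}. For $l=3$ I take $\Omega_3=\tfrac12\omega^2$ with $\omega$ any symplectic form on $\uf$, so $L(\GL(\uf)_{\Omega_3})=\mathfrak{sp}(\uf,\omega)$; since any two symplectic forms on $\uf$ are related by some $h\in\GL(\uf)$ and then $h\,\mathfrak{sp}(\uf,\omega)\,h^{-1}=\mathfrak{sp}(\uf,h_*\omega)$, being $\GL(\uf)$-conjugate to an element of $\mathfrak{sp}(\uf,\omega)$ is the same as being $\GL(\uf)$-conjugate to an element of $\mathfrak{sp}(\uf,\omega_6)$ for the prescribed $\omega_6$. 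For $l=2$ I choose a basis $e_1,\dots,e_6$ of $\uf$ with $e_1,e_2$ spanning the given $V_2$ and $e_3,\dots,e_6$ a symplectic basis of $(V_4,\omega_4)$, set $\Omega_2:=e^{1234}+e^{1256}$, and invoke Lemma~\ref{le:stabilizers}~(b), which identifies the canonically defined subspace of $\Omega_2$ with $V_4$ and describes $L(\GL(\uf)_{\Omega_2})$ as exactly the Lie algebra written in the statement. Finally, Proposition~\ref{pro:firstreduction}~(1) says that $\g$ admits a cocalibrated $\G_2$-structure, equivalently a cocalibrated $\G_2^*$-structure for which $\uf$ is non-degenerate, if and only if a closed four-form of length $3$ exists in $\L^4\Ann{\spa{e_7}}$, and Proposition~\ref{pro:firstreduction}~(2) says that $\g$ admits a cocalibrated $\G_2^*$-structure if and only if a closed four-form of length $\geq2$ (that is, of length $2$ or of length $3$) exists there; substituting the conjugacy reformulations above yields statements (1) and (2) of the theorem.

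The part that needs care is not computational but bookkeeping: one must check that the conjugacy class of $L(\GL(\uf)_{\Omega_l})$ in $\GL(\uf)$ does not depend on which length-$l$ four-form $\Omega_l$ one picks (and, for $l=2$, that the Lie algebra displayed in the theorem really is $L(\GL(\uf)_{\Omega_2})$ for an $\Omega_2$ matching the prescribed $V_2,V_4,\omega_4$), together with the single-orbit statement for length-$3$ forms; once these are in place the theorem follows from Proposition~\ref{pro:firstreduction} and Lemma~\ref{le:stabilizers} with no further work.
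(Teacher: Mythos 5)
Your overall architecture is exactly the paper's: Proposition~\ref{pro:firstreduction} reduces everything to the existence of a closed four-form of length $3$ (resp.\ $\geq 2$) in $\L^4\Ann{\spa{e_7}}$, Lemma~\ref{le:derivative}~(iii) turns closedness into $f\in L(\GL(\uf)_\Omega)$, and Lemma~\ref{le:stabilizers} supplies the stabilizer algebras; the remaining bookkeeping is the orbit/conjugacy argument you describe. However, one step as written is false: the four-forms of length $3$ on a six-dimensional \emph{real} vector space do \emph{not} form a single $\GL(\uf)$-orbit, and the two signs in Lemma~\ref{le:length(n-2)forms}~(b) are \emph{not} interchanged by a transformation of determinant $-1$. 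Indeed, fixing $0\neq X\in\L^6\uf$ and setting $(\Omega\hook X)^3=P(\Omega)\,X$, one checks $(g^*\Omega)\hook X=\det(g)\,(g^{-1})_*(\Omega\hook X)$, hence $P(g^*\Omega)=\det(g)^2P(\Omega)$; so $\sign(P)$ is a genuine $\GL(\uf)$-invariant, and since $P(\Omega_0)=6\neq 0$ for $\Omega_0=e^{1234}+e^{1256}+e^{3456}$ while $P(-\Omega_0)=-P(\Omega_0)$, the forms $\Omega_0$ and $-\Omega_0$ lie in two different open orbits (equivalently, $-\Omega_0$ is not of the form $\tfrac12\omega^2$ for any real symplectic $\omega$). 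This is precisely why Lemma~\ref{le:length(n-2)forms}~(b) carries an unavoidable $\pm$, and why the paper's proof of Proposition~\ref{pro:firstreduction} inserts the phrase ``replacing $\Omega$ by $-\Omega$, if necessary''.

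The good news is that your conclusion survives with a one-line repair: since the action is linear, $\GL(\uf)_{\Omega}=\GL(\uf)_{-\Omega}$, so although there are two orbits of length-three four-forms, every such form is $\pm g^*\Omega_0$ and its stabilizer (hence its stabilizer's Lie algebra) is still $\GL(\uf)$-conjugate to that of $\Omega_0$, which is all your argument actually uses. With ``single orbit'' replaced by ``single conjugacy class of stabilizers'' (and noting that $-\Omega$ is closed whenever $\Omega$ is, for the converse direction), the rest of your proof — the length-$2$ case, where the single-orbit claim \emph{is} correct by Lemma~\ref{le:length(n-2)forms}~(a), the identification of $L(\GL(\uf)_{\Omega_2})$ with the displayed algebra for the prescribed $V_2,V_4,\omega_4$, and the transport of $\mathfrak{sp}(\uf,\omega)$ to $\mathfrak{sp}(\uf,\omega_6)$ — goes through and yields both parts of the theorem.
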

\begin{theorem}\label{th:abstractclassificationcomplex}
Let $\g$ be a seven-dimensional complex Lie algebra with codimension one Abelian ideal $\uf$. Let $e_7\in \g\backslash \uf$, $\omega_6\in \L^2 \uf^*$ be a non-degenerated two-form on $\uf$, $V_4$ be a four-dimensional subspace of $\uf$, $V_2$ be a complementary two-dimensional subspace of $V_4$ in $\uf$ and $\omega_4\in \L^2 V_4^*$ be a non-degenerated two-form on $V_4$. Then:
\begin{enumerate}
\item
$\g$ admits a cocalibrated $(\G_2)_{\bC}$-structure such that $\uf$ is a non-degenerated subspace of $\g$ with respect to the induced symmetric complex bilinear form on $\g$ if and only if $\ad(e_7)|_{\uf}\in \mathfrak{gl}(\uf)$ is similar under the action of $\GL(\uf)$ to an element in $\mathfrak{sp}(\uf,\omega_6)$.
\item
$\g$ admits a cocalibrated $(\G_2)_{\bC}$-structure if and only if $\ad(e_7)|_{\uf}\in \mathfrak{gl}(\uf)$ is similar under the action of $\GL(\uf)$ to an element in $\mathfrak{sp}(\uf,\omega_6)$ or to an element in
\begin{equation*}
\{f\in \mathfrak{gl}(V)| f|_{V_2}=f_2+h,\, f_2\in \mathfrak{gl}(V_2),h\in \Hom(V_2,V_4),\,\, f|_{V_4}=-\frac{\tr(f_2)}{2}\id_{V_4}+f_4,\,f_4\in \mathfrak{sp} (V_4,\omega_4)\}.
\end{equation*}
\end{enumerate}
\end{theorem}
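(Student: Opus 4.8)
The plan is to assemble three results already in hand. Proposition \ref{pro:firstreduction}(3) reduces the two existence questions (for $\bF=\bC$) to the existence of a closed four-form $\hat\Omega\in\L^4\Ann{\spa{e_7}}$ of length $l(\hat\Omega)\geq 2$, respectively of length $l(\hat\Omega)=3$ in the non-degenerate case. Under the canonical identification $\L^4\Ann{\spa{e_7}}\cong\L^4\uf^*$ from Lemma \ref{le:derivative}, part (iii) of that lemma says that such a four-form $\Omega$ is closed if and only if $\ad(e_7)|_\uf$ lies in the Lie algebra $L(\GL(\uf)_\Omega)$ of its $\GL(\uf)$-stabilizer. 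Finally, Lemma \ref{le:stabilizers} computes these Lie algebras over $\bF=\bC$: it is $\mathfrak{sp}(\uf,\omega)$ when $l(\Omega)=3$, where $\omega\in\L^2\uf^*$ is a symplectic form with $\frac{1}{2}\omega^2=\Omega$, and it is exactly the set displayed in the theorem, attached to a decomposition $\uf=V_2\op V_4$ with $\Omega=\omega_2\wedge\omega_4$, when $l(\Omega)=2$. Combining these three facts is essentially the whole proof.

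First I would record that every four-form on the six-dimensional space $\uf$ has length at most $3$: a dual isomorphism $\L^4\uf^*\to\L^2\uf$ is a GCP isomorphism and hence length-preserving by Lemma \ref{le:dualiso}, and a two-vector $X$ on a six-dimensional space has $X^4=0$ trivially, so $l(X)\leq 3$ by Lemma \ref{le:length2forms}(a); thus ``$l(\hat\Omega)\geq 2$'' means ``$l(\hat\Omega)\in\{2,3\}$''. Next I would make the quantifier over $\Omega$ explicit. By Lemma \ref{le:length(n-2)forms} the four-forms on $\uf$ of length $3$ form a single $\GL(\uf)$-orbit, as do those of length $2$; and for $g\in\GL(\uf)$ one has $\GL(\uf)_{g.\Omega}=g\,\GL(\uf)_\Omega\,g^{-1}$, hence $L(\GL(\uf)_{g.\Omega})=g\,L(\GL(\uf)_\Omega)\,g^{-1}$. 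Consequently ``there exists a closed four-form $\Omega$ on $\uf$ of length $3$'' is, via Lemma \ref{le:derivative}(iii), equivalent to ``$\ad(e_7)|_\uf$ is similar under $\GL(\uf)$ to an element of $\mathfrak{sp}(\uf,\omega_6)$'', since $\mathfrak{sp}(\uf,\omega_6)$ is the stabilizer Lie algebra of one standard length-$3$ form and its $\GL(\uf)$-conjugates exhaust the stabilizer Lie algebras of all of them; likewise ``there exists a closed four-form on $\uf$ of length $2$'' is equivalent to ``$\ad(e_7)|_\uf$ is similar under $\GL(\uf)$ to an element of the displayed set'' built from the fixed data $V_2,V_4,\omega_4$. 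For the converse directions one observes that if $\ad(e_7)|_\uf$ is $\GL(\uf)$-conjugate into such a model stabilizer Lie algebra by some $g$, then the corresponding standard four-form transported by $g$ is a closed four-form of the required length, so Proposition \ref{pro:firstreduction}(3) applies. Pairing the length-$3$ equivalence with Proposition \ref{pro:firstreduction}(3) yields (1); pairing the combined length-$\{2,3\}$ equivalence with it yields (2).

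The only point needing care — hence the ``hard part'' — is precisely this bookkeeping: one must check that the passage from ``some closed four-form of the prescribed length has $\ad(e_7)|_\uf$ in its stabilizer Lie algebra'' to ``$\ad(e_7)|_\uf$ is $\GL(\uf)$-similar to an element of the single fixed model $\mathfrak{sp}(\uf,\omega_6)$, resp. of the displayed set'' is valid in both directions, and this is exactly the combination of $\GL(\uf)$-transitivity on four-forms of fixed length (Lemma \ref{le:length(n-2)forms}) with the conjugation identity $\GL(\uf)_{g.\Omega}=g\,\GL(\uf)_\Omega\,g^{-1}$ for stabilizers. Everything else is a direct citation of Proposition \ref{pro:firstreduction}, Lemma \ref{le:derivative}(iii) and Lemma \ref{le:stabilizers}(a),(b); the proof of Theorem \ref{th:abstractclassificationreal} is obtained by running the same argument with the $\bF=\bR$ descriptions in those lemmas, the only extra input there being that, by Proposition \ref{pro:firstreduction}(1), the length-$3$ case is also what governs the existence of a cocalibrated $\G_2$-structure and of a cocalibrated $\G_2^*$-structure with $\uf$ non-degenerate.
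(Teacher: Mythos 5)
Your proposal is correct and follows exactly the route the paper takes: the paper derives this theorem in one line from Proposition \ref{pro:firstreduction}, Lemma \ref{le:derivative}(iii) and Lemma \ref{le:stabilizers}, and you have simply made explicit the (correct) bookkeeping it leaves implicit, namely the $\GL(\uf)$-transitivity on four-forms of a fixed length together with the conjugation identity for stabilizer Lie algebras.
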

\subsection{Second reduction of the problem}\label{secondreduction}
Theorem \ref{th:abstractclassificationreal} (a) states the equivalence of (a)-(c) in Theorem \ref{Th1real} and Theorem \ref{th:abstractclassificationcomplex} (a) states the equivalence of (a) and (b) in Theorem \ref{Th1complex}. Theorem \ref{th:abstractclassificationreal} (b) shows that condition (b) in Theorem \ref{Th2real} implies condition (a) in the same Theorem and Theorem \ref{th:abstractclassificationcomplex} (b) shows that condition (b) in Theorem \ref{Th2complex} implies condition (a) in the same Theorem. Moreover, we get from these theorems that it suffices to show that each element in $\mathfrak{gl}(\uf)$, which is similar under $\GL(\uf)$ to an element in $\mathfrak{sp}(\uf,\omega_6)$, is also similar under $\GL(\uf)$ to an element in
\begin{equation*}
\{f\in \mathfrak{gl}(V)| f|_{V_2}=f_2+h,\, f\in \mathfrak{gl}(V_2),h\in \Hom(V_2,V_4),\,\, f|_{V_4}=-\frac{\tr(f_2)}{2}\id_{V_4}+f_4,\,f_4\in \mathfrak{sp} (V_4,\omega_4)\}
\end{equation*}
to get that condition (a) implies condition (b) in Theorem \ref{Th2real} and Theorem \ref{Th2complex}, respectively. 
Therefore, it is obviously useful to know something about the structure of complex Jordan normal forms of elements in $\mathfrak{sp}(2n,\bF)$, at least for $n=2$ and $n=3$. 

Moreover, it remains to prove the equivalence of the last condition in Theorem \ref{Th1real} -  \ref{Th2complex} to one of the previous ones. This last condition is also expressed in properties of the complex Jordan normal form of $\ad(e_7)|_{\uf}$. This should be enough motivation to recall
the following well-known results, see e.g \cite[Theorem 2.7]{MMRR} or also \cite[Theorem 2.4]{MX} for $\bF=\bR$, on the complex Jordan normal forms of elements in $\mathfrak{sp}(2n,\bF)$:
\begin{proposition}\label{pro:JNF}
Let $(V,\omega)$ be a symplectic vector space over $\bF\in \{\bR,\bC\}$. Then a linear transformation $f\in \GL(V)$ is similar under the action of $\GL(V)$ to an element in $\mathfrak{sp}(V,\omega)$ if and only if the complex Jordan normal form of $f$ has the property that for all $m\in \mathbb{N}$ and all $0\neq \lambda$ the number of Jordan blocks of size $m$ with $\lambda$ on the diagonal equals the number of Jordan blocks of size $m$ with $-\lambda$ on the diagonal and the number of Jordan blocks of size $2m-1$ with $0$ on the diagonal is even.
\end{proposition}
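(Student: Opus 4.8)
The plan is to prove the two implications separately, phrasing everything in terms of the complex Jordan normal form. Since two matrices over $\bR$ which are conjugate over $\bC$ are already conjugate over $\bR$, the conjugacy class of $f$ under $\GL(V)$ is determined by its complex Jordan normal form in both the real and the complex case. Hence it suffices to (i) show that the complex Jordan normal form of every $X\in\mathfrak{sp}(V,\omega)$ satisfies the two stated conditions, and (ii) for every Jordan type satisfying the two conditions, exhibit an explicit element of $\mathfrak{sp}(2n,\bF)$ realizing it; the general $f$ is then conjugate under $\GL(V)$ to such a model.

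For (i), let $J$ denote the Gram matrix of $\omega$; then $X\in\mathfrak{sp}(V,\omega)$ means $X^{t}J+JX=0$, i.e. $X^{t}=-JXJ^{-1}$. As every matrix is conjugate to its transpose, $X$ is conjugate to $-X$, so the Jordan structure of $X$ at $\lambda$ coincides with that at $-\lambda$ for every $\lambda$; in particular the first condition holds. For the second condition, pass to the generalized eigenspace decomposition $V_{\bC}=\bigoplus_{\lambda}V_{\lambda}$: since $X$ is $\omega$-skew, $V_{\lambda}$ and $V_{\mu}$ are $\omega$-orthogonal unless $\mu=-\lambda$, whence $V_{0}$ is a non-degenerate (hence symplectic) subspace and $N:=X|_{V_{0}}$ is a nilpotent element of $\mathfrak{sp}(V_{0},\omega|_{V_{0}})$. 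For each $k\geq 1$ form the subquotient
\begin{equation*}
P_{k}:=\ker N^{k}\,\big/\,\bigl(\ker N^{k-1}+N(\ker N^{k+1})\bigr),
\end{equation*}
whose dimension equals the number $b_{k}$ of Jordan blocks of $N$ of size exactly $k$. Using $\omega(Nu,v)=-\omega(u,Nv)$ one checks that $\beta_{k}(u,v):=\omega(u,N^{k-1}v)$ descends to a well-defined bilinear form on $P_{k}$ which is $(-1)^{k}$-symmetric and non-degenerate; for $k$ odd it is alternating and non-degenerate, forcing $b_{k}=\dim P_{k}$ to be even, which is the second condition.

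For (ii), note that if $(V,\omega)$ is an orthogonal direct sum of symplectic subspaces $(V_{i},\omega_{i})$ then $\bigoplus_{i}\mathfrak{sp}(V_{i},\omega_{i})\subseteq\mathfrak{sp}(V,\omega)$ and Jordan forms add, so it suffices to build models on the pieces. For a pair $J_{m}(\lambda)\oplus J_{m}(-\lambda)$ with $\lambda\neq0$ take the hyperbolic symplectic space $U\oplus U^{*}$ with $U=\bF^{m}$ and the operator $A\oplus(-A^{t})$ with $A=J_{m}(\lambda)$, which lies in $\mathfrak{sp}$ and has the desired Jordan form; similarly $N\oplus(-N^{t})$ on $U\oplus U^{*}$ with $N=J_{2k-1}(0)$ realizes a pair of equal odd-size nilpotent blocks. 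A single even-size nilpotent block $J_{2k}(0)$ is realized by a regular (principal) nilpotent element of the split Lie algebra $\mathfrak{sp}(2k,\bF)$, or written down explicitly as a Jordan chain equipped with a suitably signed symplectic form. When $\bF=\bR$ one groups the non-real non-zero blocks into orbits under negation together with complex conjugation and applies the same $A\oplus(-A^{t})$ device to the real Jordan blocks; summing all building blocks yields $X'\in\mathfrak{sp}(V',\omega')$ with $\dim V'=2n$ and the same complex Jordan normal form as $f$. Transporting $X'$ along a symplectomorphism $V'\to V$ (any two symplectic spaces of equal dimension over $\bF$ are isomorphic) and invoking the base-change fact from the first paragraph, we conclude that $f$ is conjugate under $\GL(V)$ to an element of $\mathfrak{sp}(V,\omega)$.

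The main obstacle is the non-degeneracy of the induced form $\beta_{k}$ on the subquotient $P_{k}$ in step (i): one must identify the radical of $\beta_{k}$ on $\ker N^{k}$ with $\ker N^{k-1}+N(\ker N^{k+1})$, which requires a careful use of the non-degeneracy of $\omega$ together with the elementary identities $(\ker N)^{\perp_{\omega}}=\operatorname{im}N$ and $N^{k-1}(\ker N^{k})=\operatorname{im}N^{k-1}\cap\ker N$ for the nilpotent $N$. A secondary, purely bookkeeping point is to make the realizations in step (ii) genuinely real when $\bF=\bR$ and $\lambda$ is non-real or purely imaginary, but this follows from the same construction applied to the real Jordan blocks.
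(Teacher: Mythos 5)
The paper does not actually prove this proposition; it cites \cite[Theorem 2.7]{MMRR} and \cite[Theorem 2.4]{MX}, so there is no in-paper argument to compare against. Your self-contained proof is therefore welcome, and its first half is correct: the reduction to the complex Jordan type via the fact that $\bC$-conjugacy of real matrices implies $\bR$-conjugacy, the conjugacy $X\sim -X$ from $X^{t}=-JXJ^{-1}$, the orthogonality $V_{\lambda}\perp_{\omega}V_{\mu}$ for $\mu\neq-\lambda$, and the form $\beta_{k}(u,v)=\omega(u,N^{k-1}v)$ on $P_{k}$ all work. The non-degeneracy you flag as the main obstacle does go through exactly as you indicate: the radical of $\beta_{k}$ in $\ker N^{k}$ is $\bigl(N^{k-1}(\ker N^{k})\bigr)^{\perp_{\omega}}\cap\ker N^{k}=\bigl(\operatorname{im}N^{k-1}\cap\ker N\bigr)^{\perp_{\omega}}\cap\ker N^{k}=(\ker N^{k-1}+\operatorname{im}N)\cap\ker N^{k}$, and if $u=a+Nb$ lies in $\ker N^{k}$ with $a\in\ker N^{k-1}$ then $N^{k+1}b=N^{k}(u-a)=0$, so this equals $\ker N^{k-1}+N(\ker N^{k+1})$.

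The genuine gap is in the realization step for $\bF=\bR$ and \emph{purely imaginary} eigenvalues. For $\lambda=i\mu$ with $\mu\in\bR\setminus\{0\}$ the orbit under negation and conjugation is just $\{i\mu,-i\mu\}$, i.e.\ a single real Jordan block $R_{m}(i\mu)$ of real size $2m$, and reality already forces the multiplicities of $J_{m}(i\mu)$ and $J_{m}(-i\mu)$ to agree; the hypothesis of the proposition adds nothing and this common multiplicity may be odd. Your device $A\oplus(-A^{t})$ applied to $A=R_{m}(i\mu)$ produces the Jordan type $J_{m}(i\mu)^{\oplus2}\oplus J_{m}(-i\mu)^{\oplus2}$ on a space of dimension $4m$, i.e.\ it always doubles the multiplicity. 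Concretely, for $V=\bR^{2}$ and Jordan type $J_{1}(i)\oplus J_{1}(-i)$ your recipe cannot produce the required element of $\mathfrak{sp}(2,\bR)$ (which of course exists, namely the rotation generator $\left(\begin{smallmatrix}0&1\\-1&0\end{smallmatrix}\right)$). So this case is not ``purely bookkeeping'': you need an additional building block realizing a single pair $J_{m}(i\mu)\oplus J_{m}(-i\mu)$ inside $\mathfrak{sp}(2m,\bR)$. One clean way is to take $\bC^{m}$ with the Hermitian form given by the antidiagonal matrix $E$; then $J_{m}(\mu)$ is $E$-selfadjoint, hence $iJ_{m}(\mu)$ is $E$-skew, its underlying real operator lies in $\mathfrak{u}(E)\subseteq\mathfrak{sp}(2m,\Im\langle\cdot,\cdot\rangle_{E})$, and its complexification has Jordan type $J_{m}(i\mu)\oplus J_{m}(-i\mu)$. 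With this extra model (and the ones you already have) the backward direction closes; everything else in your argument, including the single even nilpotent block via the principal nilpotent of the split $\mathfrak{sp}(2k,\bF)$, is fine.
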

\begin{proof}[Proof of Theorem \ref{Th1real} - \ref{Th2complex}]
Theorem \ref{th:abstractclassificationreal}, Theorem \ref{th:abstractclassificationcomplex} and Proposition \ref{pro:JNF} imply Theorem \ref{Th1real} and Theorem \ref{Th1complex}.

We remarked before Proposition \ref{pro:JNF} what is left to prove Theorem \ref{Th2real} and Theorem \ref{Th2complex}. But now, since conditions (c) and (d) in Theorem \ref{Th1real} and conditions (b) and (c) in Theorem \ref{Th1complex} are equivalent, respectively, we may and will proceed as follows to finish the proof of Theorem \ref{Th2real} and Theorem \ref{Th2complex}:
\begin{itemize}
\item
First step: Show that condition (d) in Theorem \ref{Th1real} implies condition (c) in Theorem \ref{Th2real} and, similarly, that condition (c) in Theorem \ref{Th1complex} implies condition (c) in Theorem \ref{Th2complex}.
\item
Second step: Show that the conditions (c) and (b) in Theorem \ref{Th2real} are equivalent and, similarly, that the conditions (c) and (b) in Theorem \ref{Th2complex} are equivalent.
\end{itemize}

\emph{First step}:

Let $A\in \mathbb{C}^{6\times 6}$ be a matrix in complex Jordan normal form such that for all $m\in \mathbb{N}$ and all $0\neq \lambda\in \bC$ the number of Jordan blocks of size $m$ with $\lambda$ on the diagonal is the same as the number of Jordan blocks of size $m$ with $-\lambda$ on the diagonal and the number of Jordan blocks of size $2m-1$ with $0$ on the diagonal is even.

Number consecutively the diagonal elements of the complex Jordan normal form by $\lambda_1,\ldots,\lambda_6$ and the Jordan blocks of the complex Jordan normal form by $1,\ldots, m$, both from the upper left to the lower right. Let $\JB(i)$ for all $i=1,\ldots, 6$ be the number of the Jordan block in which the corresponding generalized eigenvector lies. The assumptions on $A$ imply that we can portion $\{1,\ldots,6\}$ in the following way into three subsets $I_1,\,I_2,\,I_3$ of cardinality two:
\begin{itemize}
\item
We can group the Jordan blocks with non-zero diagonal elements into pairs of Jordan blocks of the same size with $\lambda$ and $-\lambda$, $\lambda\neq 0$ on the diagonal. Construct now subsets $I_1,\ldots,I_r$ of cardinality two by going successively through all these pairs of Jordan blocks and putting successively the two indices corresponding to the first,$\ldots$, $l$-th, $\ldots$ diagonal element in the two Jordan blocks in one $I_k$. By the index $i$ corresponding to the $l$-th diagonal element in a certain Jordan block we mean that $i\in \{1,\ldots,6\}$ such the $i$-th diagonal element of the big matrix $A$ is the the $l$-th diagonal element in the Jordan block.
\item
Similarly, we can group the Jordan blocks with zero on the diagonal and of odd size into pairs of the same size and construct subsets $I_{r+1},\ldots, I_s$ taking successively all these pairs of Jordan blocks and putting again the two indices corresponding to the first,$\ldots$, $l$-th, $\ldots$ diagonal element in the two Jordan block in one $I_k$.
\item
Finally, we construct subsets $I_{s+1},\ldots,I_3$ by taking successively the Jordan blocks with $0$ in the diagonal of even size and putting together the two indices corresponding to the $(2l-1)$-th and $2l$-th diagonal element. 
\end{itemize}
By construction, $\sum_{i\in I_k} \lambda_i=0$ for all $k=1,2,3$ and so condition (i) in Theorem \ref{Th2real} (c) and Theorem \ref{Th2complex} (c) is fulfilled, respectively. Moreover, if $i_1\in I_1$, $i_2\in I_2$ are such that $\JB(i_1)=\JB(i_2)$, then by construction also $\JB(j_1)=\JB(j_2)$ for the unique $j_k\in I_k$ such that $I_k=\{i_k,j_k\}$ for $k=1,2$. This show that also condition (ii) in Theorem \ref{Th2real} (c) and Theorem \ref{Th2complex} (c) is fulfilled, respectively. Finally, we prove that also condition (iii) in Theorem \ref{Th2real} (c) and Theorem \ref{Th2complex} (c) is fulfilled, respectively. By symmetry we may assume that there is $i_2\in I_2$ such that $\JB(i_1)=\JB(j_1)=\JB(i_2)$, $\{i_1,j_1\}=I_1$. Then $\lambda_{i_1}+\lambda_{j_1}=0$ and $\lambda_{i_1}=\lambda_{j_1}=\lambda_{i_2}$ imply $0=\lambda_{i_1}=\lambda_{j_1}=\lambda_{i_2}$. By construction, $\JB(j_1)=\JB(j_2)=\JB(i_2)=\JB(i_1)$ and so $\lambda_{j_2}=0$ for $j_2\in I_2$, $j_2\neq i_2$. This finishes the proof of the first part.

\emph{Second step}:

For this part of the proof, note that we follow that standard convention on the form of Jordan blocks which puts the $1$s on the superdiagonal.

We first show that condition (b) implies condition (c) in Theorem \ref{Th2real} and in Theorem \ref{Th2complex}, respectively. Let $f:=\ad(e_7)|_{\uf}$, $e_7\in \g\backslash \uf$. We may assume that we have a four-dimensional invariant subspace $V_4\subseteq \uf$ and a two-dimensional complementary subspace $V_2\subseteq \uf$ such that $f|_{V_2}=f_2+h$, $f_2\in \mathfrak{gl}(V_2)$, $h\in \Hom(V_2,V_4)$ and $f|_{V_4}=f_4-\frac{\tr(f_2)}{2} \id_{V_4}$ with $f_4\in \mathfrak{sp}(V_4,\omega)$ for some non-degenerated two-form $\omega$ on $V_4$. Choose a Jordan basis $v_1,\ldots,v_4$ of $f_4$ and denote by $\mu_1,\ldots,\mu_4$ the corresponding diagonal elements. To simplify notation we will say in the following that certain vectors $u_1,\ldots,u_s$ are a Jordan basis of a linear map if they there is a permutation making them into a Jordan basis. Then Proposition \ref{pro:JNF} tells us that we may assume $\mu_1=-\mu_2$ and $\mu_3=-\mu_4$. Set $\lambda_i:=\mu_i-\frac{\tr(f_2)}{2}$. The vectors $v_1,\ldots, v_4$ are also a Jordan basis of $f|_{V_4}$ such that $v_i$ and $v_j$ are in one Jordan block for $f_4$ with $\mu_i$ on the diagonal if and only if $v_i$ and $v_j$ are in one Jordan block for $f|_{V_4}$ with $\lambda_i$ on the diagonal. By \cite[Theorem 4.1.4]{GLR} there is a Jordan basis $w_1,\ldots, w_6$ of $f$ such that for all $i,j\in \{1,\ldots,4\}$ the vectors $v_i$ and $v_j$ are in the same Jordan block for $f|_{V_4}$ with $\lambda_i$ on the diagonal if and only if $w_i$ and $w_j$ are in the same Jordan block for $f$ with $\lambda_i$ on the diagonal. Since the characteristic polynomial of $f$ is the product of the characteristic polynomials of $f|_{V_4}$ and $f_2$, the Jordan base vectors $w_5$ or $w_6$ are in Jordan blocks with $\lambda_5$ or $\lambda_6$ on
the diagonal, respectively, where $\lambda_5$, $\lambda_6$ are the roots of the characteristic polynomial of $f_2$. In particular, $\tr(f_2)=\lambda_5+\lambda_6$. This allows us now to prove that condition (i)-(iii) in Theorem \ref{Th2real} (c) and Theorem \ref{Th2complex} (c) is fulfilled, respectively.
\begin{itemize}
\item
We get
\begin{equation*}
\lambda_1+\lambda_2=\mu_1+\mu_2-\tr(f_2)=-\lambda_5-\lambda_6,\quad \lambda_3+\lambda_4=\mu_3+\mu_4-\tr(f_2)=-\lambda_5-\lambda_6,
\end{equation*}
which is exactly condition (i).
\item
If $w_{i_1}$ and $w_{i_2}$ are in one Jordan block for $f$ with $\lambda_{i_1}$ on the diagonal for $i_1\in \{1,2\}$, $i_2\in \{3,4\}$, then $v_{i_1}$ and $v_{i_2}$ are in one Jordan block for $f_4$ with $\mu_{i_1}$ on the diagonal. We may have $\mu_{i_1}=\mu_{i_2}=0$. If this is not the case, Proposition \ref{pro:JNF} implies that $f_4$ has to contain two Jordan blocks of size two, one with $\mu_{i_1}$ and the other with $-\mu_{i_1}$ on the diagonal and so $v_{j_1}$, $v_{j_2}$ are in one Jordan block for those $j_1$, $j_2$ with $\{i_1,j_1\}= \{1,2\}$, $\{i_2,j_2\}= \{3,4\}$. Hence $\lambda_{i_1}=\lambda_{i_2}=-\frac{\lambda_5+\lambda_6}{2}$ or also $w_{j_1}$, $w_{j_2}$ are in one Jordan block. This is condition (ii).
\item
If $w_1$, $w_2$ and $w_{i_2}$ for some $i_2\in \{3,4\}$ or $w_{i_1}$, $w_3$ and $w_4$ for some $i_1\in \{1,2\}$ are in one Jordan block for $f$
with $\lambda$ on the diagonal, then $v_1$, $v_2$ and $v_{i_2}$ or $v_{i_1}$, $v_3$ and $v_4$ are in one Jordan block for $f_4$
with $\lambda+\frac{\lambda_5+\lambda_6}{2}$ on the diagonal. But then Proposition \ref{pro:JNF} tells us that $v_1$, $v_2$, $v_3$ and $v_4$ are in one Jordan block for $f_4$ with $0$ on the diagonal. Hence $w_1$, $w_2$, $w_3$, $w_4$ are in one Jordan block for $f$ with $-\frac{\lambda_5+\lambda_6}{2}$ on the diagonal. This is condition (iii).
\end{itemize}

Finally we show that condition (c) implies condition (b), both in Theorem \ref{Th2real} as in Theorem \ref{Th2complex}. Let $A\in\bC^{6\times 6}$ be in complex Jordan normal form and assume that it fulfills all the conditions in Theorem \ref{Th2real} (c) or in Theorem \ref{Th2complex} (c), respectively. We number consecutively the diagonal elements of the complex Jordan normal form by $\lambda_1,\ldots,\lambda_6$ and the Jordan blocks of the complex Jordan normal by $1,\ldots, m$, both from the upper left to the lower right and denote by $\JB(i)$ for all $i=1,\ldots, 6$, the number of the Jordan block in which the corresponding generalized eigenvector lies. Let $I_1$, $I_2$ and $I_3$ be a partition of $\{1,\ldots, 6\}$ as in condition (c). We may assume that $\JB(i_k)=\JB(i_3)$ for $i_k\in I_k$, $k=1,2$, $i_3\in I_3$ implies $i_k<i_3$ simply by redefining $I_k$ and $I_3$ if this is not the case (note therefore that $\lambda_{i_k}=\lambda_{i_3}$). Set $V_2:=\spa{e_i|i\in I_3}$ and $V_4:=\spa{e_j|j\in I_1\cup I_2}$. Since $\JB(i_k)=\JB(i_3)$ for $i_k\in I_k$, $k=1,2$, $i_3\in I_3$ implies $i_k<i_3$, $V_4$ is an invariant subspace for $A$. That means there are $A_2\in \mathfrak{gl}(V_2)$, $H\in \Hom(V_2,V_4)$ and $A_4\in \mathfrak{gl}(V_4)$ such that $A|_{V_2}=A_2+H$ and $A|_{V_4}=A_4$. Moreover, $A_4$ is in complex Jordan normal form and so $B:=A|_{V_4}+\frac{\tr(A_2)}{2}I_4$ is also in complex Jordan normal form. We claim that $B\in \mathfrak{sp}(4,\bF)$. We have to check that $B$ fulfills all the conditions in Proposition \ref{pro:JNF}. We use the conditions (i)-(iii) Theorem \ref{Th2real} (c) and in Theorem \ref{Th2complex} (c), which give us information on the structure of $A$, and discuss what this implies for the structure of $B$. First, note that $\tr(A_2)=\sum_{i\in I_3}\lambda_i$ implies that the diagonal elements of $B$ are given by $\mu_j=\lambda_j+\frac{\sum_{i\in I_3} \lambda_i}{2}$, $j\in  I_1\cup I_2$. Then:
\begin{itemize}
\item
Condition (i) states that if there is a diagonal element of $B$ with value $\mu=\lambda+\frac{\sum_{i\in I_3} \lambda_i}{2}$, then there is always a different diagonal element of $B$ with value $-\sum_{i\in I_3} \lambda_i-\lambda+\frac{\sum_{i\in I_3} \lambda_i}{2}=-\left(\lambda+\frac{\sum_{i\in I_3} \lambda_i}{2}\right)=-\mu$.
\item
Condition (ii) implies that if $B$ contains a Jordan block of size $2$ with $\mu=\lambda+\frac{\sum_{i\in I_3} \lambda_i}{2}$ on the diagonal, then $\lambda=-\frac{\sum_{i\in I_3} \lambda_i}{2}$, i.e. $\mu=0$, or it also contains another Jordan block of size $2$ with $-\sum_{i\in I_3} \lambda_i-\lambda+\frac{\sum_{i\in I_3} \lambda_i}{2}=-\mu$ on the diagonal.
\item
Condition (iii) states that there cannot be any Jordan block of size $3$ in $B$ and there can only be a Jordan block of size $4$ in $B$ if the diagonal elements are equal to $0$.
\end{itemize}
If there is a Jordan block of size $1$ in $B$ with $\mu$ on the diagonal, then by condition (i) there has to be another Jordan block with $-\mu$ on the diagonal. If this Jordan block would have size more than one, then condition (iii) implies that it has to have size two and condition (ii) shows $\mu=0$. But then there has to be another Jordan block of size $1$ with $0$ on the diagonal. This shows that for each Jordan block of size $1$ in $B$ with $\mu$ on the diagonal there has to be a different Jordan block of size $1$ with $-\mu$ on the diagonal.

Altogether we showed that for each Jordan block of size $m$ in $B$ with $0\neq \mu$ on the diagonal there is a Jordan block of size $m$ in $B$ with $-\mu$ on the diagonal and that the number of Jordan blocks of size $2m-1$ with $0$ on the diagonal is even. Thus Proposition \ref{pro:JNF} implies the statement.
\end{proof}
 
A seven-dimensional $\bF$-Lie algebra, $\bF\in \{\bR,\bC\}$, with six-dimensional Abelian ideal $\uf$ is nilpotent if and only if $\ad(e_7)|_{\uf}$ is nilpotent for $e_7\in \g\backslash \uf$ and this is the case if and only if the diagonal elements in the complex Jordan normal form are all $0$. Thus, for each partition $n_1+\ldots+n_k=6$ of $6$ with $n_1,\ldots,n_k\in \{1,\ldots,6\}$, $n_1\geq \ldots \geq n_k$ there is exactly one nilpotent Lie algebra, namely that one whose complex Jordan normal form has Jordan blocks of sizes $n_1,\ldots,n_k$, and these are all nilpotent seven-dimensional $\bF$-Lie algebras with six-dimensional Abelian ideal. Therefore, in total we have $11$ such nilpotent Lie algebras in both cases. All of them have rational structure constants so each of them admits a cocompact lattice. Hence, if $\g$ admits a cocalibrated $\G_2^{(*)}$-structure, we get a compact manifold with cocalibrated $\G_2^{(*)}$-structure. We end this article by noting what Theorem \ref{Th2real} (c), Theorem \ref{Th2complex} and Theorem \ref{Th1real} (d) imply for these nilpotent Lie algebras.
\begin{corollary}
Let $\g$ be a nilpotent $\bF$-Lie algebra of dimension seven with six-dimensional Abelian ideal $\uf$. Then:
\begin{enumerate}
\item
If $\bF=\bR$, then $\g$ admits a cocalibrated $\G_2$-structure if and only if the Jordan blocks in the complex Jordan normal form of $\ad(e_7)|_{\uf}$, $e_7\in \g\backslash \uf$, do not have the sizes $(5,1)$ or $(3,2,1)$ or $(3,1,1,1)$.
\item
If $\bF=\bR$, then $\g$ admits a cocalibrated $\G_2^*$-structure.
\item
If $\bF=\bC$, then $\g$ admits a cocalibrated $(\G_2)_{\bC}$-structure.
\end{enumerate}
\end{corollary}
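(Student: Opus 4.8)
The plan is to specialize the already-proved Theorems \ref{Th1real}, \ref{Th2real} and \ref{Th2complex} to the nilpotent case. The starting observation is that nilpotency of $\g$ is equivalent to nilpotency of $f:=\ad(e_7)|_{\uf}$ for $e_7\in\g\backslash\uf$, hence to the vanishing of every diagonal entry $\lambda_i$ of the complex Jordan normal form of $f$. Consequently, as recorded just before the corollary, the relevant isomorphism classes are indexed by the eleven partitions $n_1\geq\cdots\geq n_k$ of $6$, and all side conditions of Theorems \ref{Th1real}--\ref{Th2complex} involving the $\lambda_i$ simplify drastically. I would organize the proof by first recording this list of partitions and then checking the three assertions one at a time.

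For (a) I would use the equivalence (a) $\Leftrightarrow$ (d) of Theorem \ref{Th1real}. Since all $\lambda_i=0$, the condition about blocks with eigenvalues $\lambda$ and $-\lambda$, $\lambda\neq 0$, is vacuous, and (d) reduces to the single requirement that for every $m\in\bN$ the number of Jordan blocks of size $2m-1$ be even, i.e.\ that there be an even number of blocks of size $1$, of size $3$, and of size $5$. Running through the eleven partitions of $6$ one sees at once that this fails exactly for $(5,1)$ (one block of size $5$), for $(3,2,1)$ (one block of size $3$), and for $(3,1,1,1)$ (one block of size $3$ together with three blocks of size $1$), and holds in the other eight cases; this is precisely the statement of (a).

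For (b) and (c) I would invoke condition (c) of Theorem \ref{Th2real} and of Theorem \ref{Th2complex}, these two conditions being literally the same. Because all $\lambda_i=0$, condition (i) becomes $0=0=0$ and holds for any partition of $\{1,\dots,6\}$ into three pairs $I_1,I_2,I_3$; condition (ii) reads $\lambda_{i_1}=\lambda_{i_2}=0$ and is automatic; so only condition (iii) must be arranged, and here I would split into two cases according to the size of the largest Jordan block. If the largest block has size at most $4$, the six indices sit in at least two blocks and one can choose $I_1$ and $I_2$ so that each consists of two indices lying in \emph{distinct} Jordan blocks; then no index of $I_2$ can lie in the common block of both indices of $I_1$, and symmetrically, so the hypothesis of (iii) is never met and (iii) holds vacuously. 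I would exhibit such a choice explicitly for each of $(4,2),(4,1,1),(3,3),(3,2,1),(3,1,1,1),(2,2,2),(2,2,1,1),(2,1,1,1,1),(1,1,1,1,1,1)$. For the two remaining partitions $(6)$ and $(5,1)$, where the largest block has size $\geq 5$, I would instead choose $I_1,I_2$ inside that block, so that $I_1\cup I_2$ lies in a single Jordan block; then the conclusion of (iii) — that $\lambda_j=-\frac{1}{2}\sum_{i\in I_3}\lambda_i=0$ on $I_1\cup I_2$ and that $\JB$ is constant on $I_1\cup I_2$ — holds outright. In all eleven cases conditions (i)--(iii) are satisfied, so Theorems \ref{Th2real} and \ref{Th2complex} yield the claimed cocalibrated $\G_2^*$- and $(\G_2)_{\bC}$-structures. (Alternatively, once (b) is established, (c) also follows from Corollary \ref{cor:G2impliesG2star}(c), since a complex nilpotent $\g$ as above is the complexification of the real Lie algebra with the same Jordan block sizes.)

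The only place that demands any care is the bookkeeping for condition (iii): it is an implication, so it suffices to make its hypothesis fail, and one must not overlook the ``long-block'' partitions $(6)$ and $(5,1)$, where this is impossible and one has instead to verify the conclusion directly. Everything else is a direct substitution of $\lambda_i=0$ into the cited theorems.
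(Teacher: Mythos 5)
Your proposal is correct and follows exactly the route the paper intends: the corollary is stated as a direct consequence of Theorem \ref{Th1real}(d) and of condition (c) in Theorems \ref{Th2real} and \ref{Th2complex}, specialized to the case where all diagonal entries $\lambda_i$ vanish. Your case check of the eleven partitions of $6$, including the careful treatment of condition (iii) for the partitions $(6)$ and $(5,1)$, fills in precisely the verification the paper leaves to the reader.
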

\section{Acknowledgments} The author thanks the University of Hamburg for financial support and
Vicente Cort\'es and Stefan Suhr for many helpful comments on a draft version of this paper. This work
was supported by the German Research Foundation (DFG) within the
Collaborative Research Center 676 "Particles, Strings and the
Early Universe".

\end{document}